\theoremstyle{plain}
\newtheorem{definition}{Definition}[section]
\newtheorem{example}{Example}[section]
\newtheorem{sublemma}{Sublemma}[subsection]
\newtheorem{lemma}{Lemma}[section]
\newtheorem{notation}{Notation}[section]
\newtheorem{proposition}{Proposition}[section]
\newtheorem{remark}{Remark}[section]
\newtheorem{theorem}{Theorem}[section]
 \numberwithin{equation}{subsection}
\begin{document}

\centerline{\Large\bf{Topological Free  Entropy Dimension in Unital
C$^*$ algebras }}

\vspace{1cm}

\centerline{\Large\bf{II  : Orthogonal Sum of Unital
C$^*$-algebras}}

\vspace{1cm}

\centerline{Don Hadwin \qquad \qquad  and  \qquad \qquad Junhao
Shen\footnote{The second author is partially supported   by an NSF
grant.}}
\bigskip

\centerline{\small{Department of Mathematics and Statistics,
University of New Hampshire, Durham, NH, 03824}}

\vspace{0.2cm}

\centerline{Email: don@math.unh.edu  \qquad \qquad and \qquad \qquad
jog2@cisunix.unh.edu \qquad }

\bigskip

\noindent\textbf{Abstract: } In the paper,  we   obtain a formula
for topological free entropy dimension in the orthogonal sum (or
direct sum) of unital C$^*$ algebras. As an application, we compute the
topological free entropy dimension of any family of self-adjoint
generators of a finite dimensional C$^*$ algebra.

\vspace{0.2cm} \noindent{\bf Keywords:} Topological free entropy
dimension, C$^*$ algebra

\vspace{0.2cm} \noindent{\bf 2000 Mathematics Subject
Classification:} Primary 46L10, Secondary 46L54

\section{Introduction}The theory of free probability and free entropy was developed by
Voiculescu from 1980s. It played a crucial role in the recent study
of finite von Neumann algebras (see \cite{BDK}, \cite{Dyk}, \cite{DJSh},
\cite{Ge1}, \cite{Ge2}, \cite{GePo}, \cite{GS1}, \cite{GS2},
\cite{HaSh}, \cite{Jung2}, \cite{JuSh}, \cite{V2}, \cite{V3},
\cite{V4}). An analogue of free entropy dimension in C$^*$
algebra context, the notion of topological free entropy dimension of
of $n$-tuples of elements in a unital C$^*$ algebra,  was
introduced by Voiculescu in \cite{Voi}, where some   basic
properties of free entropy dimension are discussed.

We start our investigation of the properties of topological free
entropy dimension in \cite {HaSh2}, where   we computed the
topological free entropy dimension of a self-adjoint element in a
unital C$^*$ algebra. Some estimation of topological free entropy
dimension in an infinite dimensional, unital, simple C$^*$ algebra
with a unique trace was also obtained in the same paper. In this
article, we will continue our investigation on the properties of
topological free entropy dimension.

First, we compute   the topological free entropy dimension in an
$n\times n$ complex matrix algebra  $\mathcal M_{n}(\Bbb C)$ as
follows (see {\bf Theorem 3.1}):
$$
  \delta_{top}(x_1,\ldots,x_m)=1-\frac
1{n^2},
$$ where $x_1,\ldots,x_m$ is any family of self-adjoint generators
of $\mathcal M_{n}(\Bbb C)$ and $\delta_{top}(x_1,\ldots,x_m)$ is
the Voiculescu's topological free entropy dimension of $x_1,\ldots,
x_m$.

In \cite{Voi}, Voiculescu asked the question whether the equality
$$
\chi_{top}(x_1\oplus y_1,\ldots, x_n\oplus y_n)=
\max\{\chi_{top}(x_1 ,\ldots, x_n ),\chi_{top}(y_1 ,\ldots, y_n )\},
$$ holds when $x_1,\ldots, x_n$ and $y_1,\ldots, y_n$ are
self-adjoint elements in a unital C$^*$ algebras $\mathcal A$, or
$\mathcal B$ respectively, and $\chi_{top}$ is the topological free
entropy defined in \cite{Voi}. Motivated by his question, in the
paper we consider the topological free entropy dimension
in the orthogonal sum of unital C$^*$ algebras. More specifically,
we prove the following result.

\vspace{0.1cm}

\noindent { \bf Theorem 4.2: } Suppose that $\mathcal A$ and
$\mathcal B$ are two unital C$^*$ algebras and $x_1\oplus y_1,
\ldots, x_n\oplus y_n$ is a family of self-adjoint elements that
generate  $\mathcal A\bigoplus \mathcal B$ as a C$^*$-algebra.  Assume
$$
s=\delta_{top}(x_1,\ldots,x_n)  \qquad \text { and } \qquad
t=\delta_{top}(y_1,\ldots,y_n).
$$
\noindent (1) If $s\ge 1$ or $t\ge 1$, then
$$
\delta_{top}(x_1\oplus y_1, \ldots, x_n\oplus y_n)=\max
\{\delta_{top}(x_1,\ldots,x_n), \delta_{top}(y_1,\ldots,y_n)\}
$$
\noindent (2)
  If $s<1$,  $t<1$ and both families $\{x_1,\ldots,x_n\}$, $\{y_1,\ldots,
y_n\}$ are stable (see Definition 4.1) , then (i)
$$
\delta_{top}(x_1\oplus y_1, \ldots, x_n\oplus y_n)=
\frac{st-1}{s+t-2} ;
$$
and (ii) the family of elements $x_1\oplus y_1, \ldots, x_n\oplus
y_n$ is also stable.

\vspace{0.1cm}

Combining the preceding two results, Theorem 3.1 and Theorem 4.2, we
obtain the topological free entropy dimension of any family of
self-adjoint generators in a finite dimensional C$^*$ algebra (see
{\bf Theorem 5.1}): Suppose that $\mathcal A$ is a finite
dimensional C$^*$ algebra and $dim_{\Bbb C}\mathcal A$ is the
complex dimension of $\mathcal A$. Then
$$
  \delta_{top}(x_1,\ldots,x_n)= 1- \frac 1 {dim_{\Bbb C}\mathcal A},
$$ where $x_1,\ldots,x_n$ is a family of self-adjoint generators
of $\mathcal A$.

The organization of the paper is as follows. In section 2, we recall
Voiculescu's definition of topological free entropy dimension. The
computation of topological free entropy dimension in an $n\times n$
complex matrix algebra is carried out in section 3. In section 4, we
prove a formula of the topological free entropy dimension in the
orthogonal sum of the unital C$^*$ algebras. In section 5, we
calculate the topological free entropy dimension in any finite
dimensional C$^*$ algebra.

In this article, we only discuss unital C$^*$ algebras which have
the approximation property (see  Definition 5.3  in \cite {HaSh2}).
\section{Definitions  and preliminary}

In this section, we will recall Voiculescu's definition of the
topological free entropy dimension of $n$-tuples of elements in a
unital C$^*$ algebra.
\subsection{A Covering    of  a set  in a metric space}

Suppose $(X,d)$ is a metric space and $K$ is a subset of $X$. A
family of balls in $X$  is called a covering of $K$ if the union of
these balls covers $K$ and the centers of these balls are in $K$.

\subsection{Covering numbers in  complex matrix algebra
$(\mathcal{M}_{k}(\mathbb{C}))^n$}
 Let $\mathcal{M}_{k}(\mathbb{C})$ be the $k\times k$ full matrix
algebra with entries in $\mathbb{C}$,  and $\tau_{k}$ be the
  normalized trace on $\mathcal{M}_{k}(\mathbb{C})$, i.e.,
  $\tau_{k}=\frac{1}{k}Tr$, where $Tr$ is the usual trace on
  $\mathcal{M}_{k}(\mathbb{C})$.
 Let $\mathcal{U}(k)$ denote the group
of all unitary matrices in $\mathcal{M}_{k}(\mathbb{C})$. Let
$\mathcal{M}_{k}(\mathbb{C})^{n}$ denote the direct sum of $n$
copies of $\mathcal{M}_{k}(\mathbb{C})$.  Let $\mathcal
M_k^{s.a}(\Bbb C)$ be the subalgebra of $\mathcal M_k(\Bbb C)$
consisting of all self-adjoint matrices of $\mathcal M_k(\Bbb C)$.
Let $(\mathcal M_k^{s.a}(\Bbb C))^n$ be the direct sum (or
orthogonal sum) of $n$ copies of $\mathcal M_k^{s.a}(\Bbb C)$. Let
$\|\cdot \|$ be an operator norm  on
  $\mathcal{M}_{k}(\mathbb{C})^{n}$ defined by
  \[
  \Vert(A_{1},\ldots,A_{n})\Vert =  \max\{\|A_1\|,\ldots, \|A_n\| \}    \]
  for all $(A_{1},\ldots,A_{n})$ in $\mathcal{M}_{k}(\mathbb{C})^{n}$.
Let $\Vert\cdot\Vert_{Tr}$
  denote the usual trace norm induced by $Tr$ on
  $\mathcal{M}_{k}(\mathbb{C})^{n}$, i.e.,
  \[
  \Vert(A_{1},\ldots,A_{n})\Vert_{Tr} =\sqrt{Tr(A_{1}^{\ast}A_{1})+\ldots
  +Tr(A_{n}^{\ast}A_{n})}
  \]
  for all $(A_{1},\ldots,A_{n})$ in $\mathcal{M}_{k}(\mathbb{C})^{n}$.
  Let $\Vert\cdot\Vert_{2}$
  denote the  trace norm induced by $\tau_{k}$ on
  $\mathcal{M}_{k}(\mathbb{C})^{n}$, i.e.,
  \[
  \Vert(A_{1},\ldots,A_{n})\Vert_{2} =\sqrt{\tau_{k}(A_{1}^{\ast}A_{1})+\ldots
  +\tau_{k}(A_{n}^{\ast}A_{n})}
  \]
  for all $(A_{1},\ldots,A_{n})$ in $\mathcal{M}_{k}(\mathbb{C})^{n}$.

For every $\omega>0$, we define the $\omega$-$\|\cdot\|$-ball
$Ball_{}(B_{1},\ldots ,B_{n};\omega,  \|\cdot\|)$ centered at
$(B_{1},\ldots,B_{n})$ in $\mathcal{M}_{k}(\mathbb{C})^{n}$ to be
the subset of $\mathcal{M}_{k}(\mathbb{C})^{n}$   consisting of all
$(A_{1},\ldots,A_{n})$ in $\mathcal{M}_{k}(\mathbb{C})^{n}$ such
that $$\Vert(A_{1},\ldots,A_{n})-(B_{1},\ldots,B_{n})\Vert
<\omega.$$

\begin{definition}
Suppose that $\Sigma$ is a subset of
$\mathcal{M}_{k}(\mathbb{C})^{n}$. We define $\nu_\infty(\Sigma,
\omega)$ to be the minimal number of $\omega$-$\|\cdot\|$-balls that
consist a covering of $\Sigma$ in $\mathcal{M}_{k}(\mathbb{C})^{n}$.
\end{definition}

For every $\omega>0$, we define the $\omega$-$\|\cdot\|_2$-ball
$Ball_{}(B_{1},\ldots ,B_{n};\omega,  \|\cdot\|_2)$ centered at
$(B_{1},\ldots,B_{n})$ in $\mathcal{M}_{k}(\mathbb{C})^{n}$ to be
the subset of $\mathcal{M}_{k}(\mathbb{C})^{n}$   consisting of all
$(A_{1},\ldots,A_{n})$ in $\mathcal{M}_{k}(\mathbb{C})^{n}$ such
that $$\Vert(A_{1},\ldots,A_{n})-(B_{1},\ldots,B_{n})\Vert_2
<\omega.$$

\begin{definition}
Suppose that $\Sigma$ is a subset of
$\mathcal{M}_{k}(\mathbb{C})^{n}$. We define $\nu_2(\Sigma, \omega)$
to be the minimal number of $\omega$-$\|\cdot\|_2$-balls that
consist a covering of $\Sigma$ in $\mathcal{M}_{k}(\mathbb{C})^{n}$.
\end{definition}

The following lemma is obvious.

\begin{lemma}
Suppose  $K$ is a subset of  $ \mathcal{M}_{k}(\mathbb{C} )^n$, equipped with a distance $d$. Suppose that
$\{B_\lambda\}_{\lambda\in\Lambda}$ is a family of balls of radius $\omega$ so that
$$K\subseteq \cup_{\lambda\in\Lambda}B_\lambda.$$
Then
$$
\text {Covering number of $K$ by balls of radius $2\omega$} \le \text{Cardinality of }\Lambda.
$$
\end{lemma}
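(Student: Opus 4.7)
The plan is to give a direct triangle-inequality argument that converts the given covering $\{B_\lambda\}_{\lambda\in\Lambda}$ into a covering by balls of radius $2\omega$ whose centers lie in $K$ (which is the stipulation built into the paper's definition of ``covering'' from subsection 2.1).

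First, I would discard any index $\lambda$ for which $B_\lambda\cap K=\emptyset$; these contribute nothing to the covering of $K$ and we are only trying to bound the cardinality from above, so this only helps. For each remaining $\lambda$, choose a point $p_\lambda\in B_\lambda\cap K$. This selection uses only (dependent) choice and is the only non-routine ingredient. Let $c_\lambda$ denote the center of $B_\lambda$, so by assumption $d(c_\lambda,p_\lambda)<\omega$.

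Next, I would verify that the family $\{\widetilde B_\lambda\}_{\lambda}$, where $\widetilde B_\lambda$ is the open ball of radius $2\omega$ centered at $p_\lambda$, covers $K$. Indeed, given $x\in K$, there exists $\lambda$ with $x\in B_\lambda$, hence $d(x,c_\lambda)<\omega$; the triangle inequality then gives
\[
d(x,p_\lambda)\le d(x,c_\lambda)+d(c_\lambda,p_\lambda)<\omega+\omega=2\omega,
\]
so $x\in\widetilde B_\lambda$. Since each center $p_\lambda$ lies in $K$ by construction, $\{\widetilde B_\lambda\}$ is a genuine covering of $K$ in the sense of subsection 2.1, and its cardinality is at most $|\Lambda|$. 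Taking the minimum over all such coverings yields the stated inequality.

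There is essentially no obstacle here; the content is merely that replacing the original centers (which may lie outside $K$) by nearby points in $K$ costs at most a factor of $2$ in the radius. The lemma will then be applied in the sequel to compare the extrinsic covering numbers $\nu_\infty$ and $\nu_2$ against coverings produced by other, more combinatorial, constructions whose centers may not automatically lie in the set being covered.
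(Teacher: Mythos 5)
Your proof is correct and is precisely the (omitted) argument the paper has in mind: since the paper's notion of covering in Subsection~2.1 requires centers in $K$, the only content of the lemma is that replacing each center $c_\lambda$ by a chosen point $p_\lambda\in B_\lambda\cap K$ and doubling the radius via the triangle inequality produces an admissible covering of cardinality at most $|\Lambda|$. You also correctly handle the minor bookkeeping of discarding indices with $B_\lambda\cap K=\emptyset$, so there is nothing to add.
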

\subsection{Noncommutative polynomials}
In this article, we always assume that $\mathcal A$ is a unital
C$^*$-algebra. Let $x_1,\ldots, x_n, y_1,\ldots, y_m$ be
self-adjoint elements in $\mathcal A$. Let $\Bbb C\langle
X_1,\ldots, X_n, Y_1,\ldots,Y_m\rangle $ be the unital
noncommutative polynomials in the indeterminates $X_1,\ldots, X_n,
Y_1,\ldots,Y_m$. Let $\{P_r\}_{r=1}^\infty$ be the collection of all
noncommutative polynomials in $\Bbb C\langle X_1,\ldots, X_n,
Y_1,\ldots,Y_m\rangle $ with rational complex coefficients. (Here
``rational complex coefficients" means that the real   and imaginary
parts of all coefficients of $P_r$ are rational numbers).

\begin{remark}
   We alsways assume that $1\in  \Bbb C\langle
X_1,\ldots, X_n, Y_1,\ldots,Y_m\rangle $.
\end{remark}

\subsection{Voiculescu's Norm-microstates Space}
For all integers $r, k\ge 1$, real numbers $R, \epsilon>0$ and
noncommutative polynomials $P_1,\ldots, P_r$, we define
$$
\Gamma^{(top)}_R(x_1,\ldots, x_n, y_1,\ldots, y_m; k,\epsilon,
P_1,\ldots, P_r)
$$ to be the subset of $(\mathcal M_k^{s.a}(\Bbb C))^{n+m}$
consisting of all these $$ (A_1,\ldots, A_n, B_1,\ldots, B_m)\in
(\mathcal M_k^{s.a}(\Bbb C))^{n+m}
$$ satisfying
 $$  \max \{\|A_1\|, \ldots, \|A_n\|, \|B_1\|,\ldots, \|B_m\|\}\le R
 $$ and
$$
|\|P_j(A_1,\ldots, A_n,B_1,\ldots,B_m)\|-\| P_j(x_1,\ldots,
x_n,y_1,\ldots,y_m)\||\le \epsilon, \qquad \forall \ 1\le j\le r.
$$



Define the norm-microstates space of $x_1,\ldots,x_n$ in the
presence of $y_1,\ldots, y_m$, denoted by
$$
\Gamma^{(top)}_R(x_1,\ldots, x_n: y_1,\ldots, y_m; k,\epsilon,
P_1,\ldots, P_r)
$$ as the projection of $
\Gamma^{(top)}_R(x_1,\ldots, x_n, y_1,\ldots, y_m; k,\epsilon,
P_1,\ldots, P_r) $ onto the space $(\mathcal M_k^{s.a}(\Bbb C))^n$
via the mapping
$$
(A_1,\ldots, A_n, B_1,\ldots, B_m) \rightarrow (A_1,\ldots, A_n).
$$

\subsection{Voiculescu's topological free entropy dimension}

Define
$$
\nu_\infty(\Gamma^{(top)}_R(x_1,\ldots, x_n: y_1,\ldots, y_m;
k,\epsilon, P_1,\ldots, P_r),\omega)
$$ to be the covering number of the set $
\Gamma^{(top)}_R(x_1,\ldots, x_n: y_1,\ldots, y_m; k,\epsilon,
P_1,\ldots, P_r) $ by $\omega$-$\| \cdot\|$-balls in the metric
space $(\mathcal M_k^{s.a}(\Bbb C))^n$ equipped with operator norm.

\begin{definition}Define
$$
  \begin{aligned}
     \delta_{top}(x_1,\ldots, & \ x_n: y_1,\ldots, y_m;  \omega)\\
     & =
     \sup_{R>0} \ \inf_{\epsilon>0, r\in \Bbb N}  \ \limsup_{k\rightarrow\infty} \frac {\log(\nu_\infty(\Gamma^{(top)}_R(x_1,
     \ldots, x_n: y_1,\ldots, y_m; k,\epsilon, P_1,\ldots,
P_r),\omega))}{-k^2\log\omega}\\ \end{aligned}
$$

\noindent {\bf The topological entropy dimension } of $x_1,\ldots,
x_n$ in the presence of $y_1,\ldots, y_m$ is defined by
$$ \delta_{top}(x_1,\ldots, \ x_n: y_1,\ldots, y_m)=
 \limsup_{\omega\rightarrow 0^+}  \delta_{top}(x_1,\ldots,   x_n:y_1,\ldots, y_m;  \omega)
$$
\end{definition}
\begin{remark} Let $R>
\max\{\|x_1\|,\ldots,\|x_n\|,\|y_1\|,\ldots,\|y_m\|\}$ be some
positive number. By definition, we know
$$
  \begin{aligned}
     \delta_{top}(x_1,\ldots, & \ x_n: y_1,\ldots, y_m )\\
     & =
       \limsup_{\omega\rightarrow 0^+}  \inf_{\epsilon>0, r\in \Bbb N}  \ \limsup_{k\rightarrow\infty}
       \frac {\log(\nu_\infty(\Gamma^{(top)}_R(x_1,
     \ldots, x_n: y_1,\ldots, y_m; k,\epsilon, P_1,\ldots,
P_r),\omega))}{-k^2\log\omega}\\ \end{aligned}
$$
\end{remark}

\begin{remark} Apparently, $\delta_{top}(x_1,\ldots,  x_n: y_1,\ldots, y_m )$ does not depend on the order of
the sequence $\{P_r\}_{r=1}^\infty$.
\end{remark}
\subsection{C$^*$ algebra ultraproduct  and  von Neumann
algebra ultraproduct}

Suppose $\{\mathcal M_{k_m}(\Bbb C)\}_{m=1}^\infty$ is a sequence of
complex matrix algebras where $k_m$ goes to infinity as $m$ goes to
infinity. Let $\gamma$ be a free ultrafilter in $\beta(\Bbb
N)\setminus \Bbb N$. We can introduce a unital C$^*$ algebra
$\prod_{m=1}^\infty \mathcal M_{k_m}(\Bbb C)$ as follows:
$$
\prod_{m=1}^\infty \mathcal M_{k_m}(\Bbb C) = \{(Y_m)_{m=1}^\infty \
| \ \forall \ m\ge 1, \ Y_m \in \mathcal M_{k_m}(\Bbb C) \ \text {
and } \ \sup_{m\ge 1} \| Y_m\|<\infty\}.
$$
We can also introduce   norm   closed two sided ideals $\mathcal
I_\infty$ and $\mathcal I_2$   as follows.
$$
  \begin{aligned}
    \mathcal I_\infty & = \{(Y_m)_{m=1}^\infty\in  \prod_{m=1}^\infty \mathcal M_{k_m}(\Bbb C) \
| \ \lim_{m\rightarrow \gamma } \|Y_m\| =0\}\\
 \mathcal I_2 & = \{(Y_m)_{m=1}^\infty\in  \prod_{m=1}^\infty \mathcal M_{k_m}(\Bbb C) \
| \ \lim_{m\rightarrow \gamma } \|Y_m\|_2 =0\}
  \end{aligned}
$$

\begin{definition}
The C$^*$ algebra ultraproduct of $\{\mathcal M_{k_m}(\Bbb
C)\}_{m=1}^\infty$ along the ultrfilter $\gamma$, denoted by
$\prod_{m=1}^\gamma \mathcal M_{k_m}(\Bbb C)$, is defined to be the
quotient
  algebra of $\prod_{m=1}^\infty \mathcal M_{k_m}(\Bbb C)$ by
the ideal $\mathcal I_\infty$. The image of $(Y_m)_{m=1}^\infty\in
\prod_{m=1}^\infty \mathcal M_{k_m}(\Bbb C)$ in the quotient algebra
is denoted by $[(Y_m)_{m}]$.
\end{definition}

\begin{definition}
The von Neumann algebra ultraproduct of $\{\mathcal M_{k_m}(\Bbb
C)\}_{m=1}^\infty$ along the ultrfilter $\gamma$, also denoted by
$\prod_{m=1}^\gamma \mathcal M_{k_m}(\Bbb C)$ if   no confusion
arises, is defined to be the quotient    algebra of
$\prod_{m=1}^\infty \mathcal M_{k_m}(\Bbb C)$ by the ideal $\mathcal
I_2$. The image of $(Y_m)_{m=1}^\infty\in \prod_{m=1}^\infty
\mathcal M_{k_m}(\Bbb C)$ in the quotient algebra is denoted by
$[(Y_m)_{m}]$.
\end{definition}
\begin{remark}
The von Neumann algebra ultraproduct   $\prod_{m=1}^\gamma \mathcal
M_{k_m}(\Bbb C)$  is a finite  factor (see \cite{McDuff}).
\end{remark}

\section{Topological free entropy dimension in $\mathcal M_n(\Bbb C)$}

In this section, we are going to calculate the topological free
entropy dimension of a family of self-adjoint generators of
$\mathcal M_n(\Bbb C)$.

\subsection{Upper-bound}
\begin{proposition}
   Let $n$ be a positive integer and $\mathcal M_n(\Bbb C)$ be the
   $n\times n$
   matrix algebra over the complex numbers. Let $x_1,\ldots, x_m$ be
   a family of self-adjoint matrices that generate $\mathcal M_n(\Bbb
   C)$. Then
   $$
    \delta_{top}(x_1,\ldots,x_m) \le 1- \frac 1 {n^2}.
   $$
\end{proposition}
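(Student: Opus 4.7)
The strategy is to trap every sufficiently accurate norm-microstate $(A_1,\dots,A_m)$ of $(x_1,\dots,x_m)$ near the $\mathcal U(k)$-orbit of a fixed amplified tuple, and then to bound the operator-norm covering number of that orbit by its real dimension.

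\emph{Step 1 (trapping microstates).} Because $x_1,\dots,x_m$ generate $\mathcal M_n(\mathbb C)$, there exist polynomials $Q_{ij}$ ($1\le i,j\le n$) with $Q_{ij}(x_1,\dots,x_m)=e_{ij}$, the standard matrix units, and scalars $\lambda^{(\ell)}_{ij}$ with $x_\ell=\sum_{i,j}\lambda^{(\ell)}_{ij}e_{ij}$. Placing among the constraint polynomials all the matrix-unit relations satisfied by the $Q_{ij}$, the identity $\sum_iQ_{ii}-1$, and the identities $X_\ell-\sum_{i,j}\lambda^{(\ell)}_{ij}Q_{ij}$, every microstate $(A_1,\dots,A_m)$ produces $\{Q_{ij}(A)\}$ satisfying the matrix-unit relations with approximate sum $I_k$, all in operator norm, within a tolerance $\eta$ that tends to $0$ as $\epsilon\to 0$. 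Standard semiprojectivity of $\mathcal M_n(\mathbb C)$ then yields exact matrix units $\{E_{ij}\}\subset\mathcal M_k(\mathbb C)$ with $\|E_{ij}-Q_{ij}(A)\|\le C\eta$; in particular $\sum_iE_{ii}$ is a projection within $C\eta$ of $I_k$, hence equals $I_k$ when $C\eta<1$, which forces $n\mid k$. For $n\nmid k$ the microstate set becomes empty for $\epsilon$ small, contributing nothing to $\limsup_{k\to\infty}$. For $n\mid k$, writing $k=nq$, the map $\phi(a)=\sum_{i,j}\lambda^{(\ell)}_{ij}E_{ij}$ (applied coordinatewise) is a unital $*$-embedding $\mathcal M_n(\mathbb C)\hookrightarrow\mathcal M_k(\mathbb C)$, and combining the approximations gives $\|A_\ell-\phi(x_\ell)\|\le C'\eta$ for every $\ell$.

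\emph{Step 2 (covering the orbit).} Every unital embedding $\phi$ is $\mathcal U(k)$-conjugate to the fixed $\phi_0(a)=a\otimes I_q$, so every microstate lies within operator-norm distance $C'\eta$ of
\[
\Omega_k=\bigl\{(U(x_\ell\otimes I_q)U^*)_{\ell=1}^m:U\in\mathcal U(k)\bigr\}.
\]
The stabilizer of $(x_1\otimes I_q,\dots,x_m\otimes I_q)$ in $\mathcal U(k)$ is $I_n\otimes\mathcal U(q)$, of real dimension $q^2$, so $\Omega_k$ is a homogeneous space of real dimension $k^2-q^2=k^2(1-1/n^2)$. Standard volumetric estimates for $\mathcal U(k)$ in the operator-norm metric, combined with the $2R$-Lipschitz bound for $U\mapsto U(x_\ell\otimes I_q)U^*$, cover $\Omega_k$ by at most $(C_1k/\omega)^{k^2-q^2}$ operator-norm $\omega$-balls. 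Choosing $\eta\le\omega$ and applying the doubling-the-radius lemma of Section~2.2 yields
\[
\nu_\infty\bigl(\Gamma^{(top)}_R(\cdots);\,2\omega\bigr)\le (C_2 k/\omega)^{k^2-q^2}.
\]

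\emph{Step 3 and main obstacle.} Dividing by $-k^2\log\omega$ and letting $k\to\infty$ along multiples of $n$, the exponent $(k^2-q^2)/k^2$ equals $1-1/n^2$ exactly and the $k$-dependent prefactor is negligible, so $\limsup_{k\to\infty}\log\nu_\infty/(-k^2\log\omega)\le 1-1/n^2$. Taking $\inf_{\epsilon,r}$ and $\limsup_{\omega\to 0^+}$ preserves the bound, giving $\delta_{top}(x_1,\dots,x_m)\le 1-1/n^2$. The principal technical step is the quantitative operator-norm semiprojectivity in Step 1 (approximate matrix units must be perturbed to exact ones with error independent of $k$, which simultaneously produces the divisibility obstruction $n\mid k$); once this is in hand, the stabilizer dimension count and the volumetric covering of $\mathcal U(k)$ are routine.
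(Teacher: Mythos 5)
Your approach is genuinely different from the paper's. The paper dispatches Proposition 3.1 in two lines by citing prior results: since $\mathcal M_n(\Bbb C)$ has a unique tracial state, Theorem 5.1 of [HaSh2] gives $\delta_{top}\le\kappa\delta$, and then $\kappa\delta\le 1-\frac 1{n^2}$ from [Jung3] (or [HaSh]) finishes the proof. You instead give a direct microstate-space argument based on quantitative semiprojectivity of $\mathcal M_n(\Bbb C)$ and a dimension count on the $\mathcal U(k)$-orbit of the amplified tuple. This is a natural and more self-contained route, in the same spirit as the paper's own lower-bound argument (Lemmas 3.1--3.3, which also exploit that the stabilizer of $(x_1\otimes I_q,\ldots)$ is $I_n\otimes\mathcal U(q)$).

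However, Step 3 has a genuine gap. You state the covering bound $\nu_\infty(\Omega_k,\omega)\le(C_1 k/\omega)^{k^2-q^2}$ and then assert that the $k$-dependent prefactor is negligible. It is not. Dividing $\log\nu_\infty$ by $-k^2\log\omega$ gives
\[
\frac{(k^2-q^2)\log(C_1 k/\omega)}{-k^2\log\omega}=\left(1-\frac{1}{n^2}\right)\left(1+\frac{\log(C_1 k)}{\log(1/\omega)}\right),
\]
which tends to $\infty$ as $k\to\infty$ for fixed $\omega$, so the stated bound yields only $\delta_{top}\le\infty$. What is actually needed is $\nu_\infty(\Omega_k,\omega)\le(C/\omega)^{k^2-q^2}$ with $C$ \emph{independent of $k$}. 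This is correct but is not "standard volumetric estimates for $\mathcal U(k)$ combined with a Lipschitz push-forward": covering $\mathcal U(k)$ and pushing forward would give exponent $k^2$, not $k^2-q^2$, so the stabilizer must be used to cover the quotient $\mathcal U(k)/(I_n\otimes\mathcal U(q))$ directly; and eliminating the $k$-dependence in the constant for the \emph{operator-norm} metric (as opposed to the normalized Hilbert--Schmidt metric the paper uses in its own sublemmas) requires Szarek's metric-entropy theorem for homogeneous spaces [Sza] or an equivalent volume asymptotic, since a naive comparison $\|\cdot\|\le\sqrt k\,\|\cdot\|_2$ reintroduces a $\sqrt k$ factor. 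You have identified the right geometry, but the covering estimate must be sharpened to close the argument.
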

\begin{proof} Since $\mathcal M_n(\Bbb C)$ is  a unital C$^*$ algebra
with a unique tracial state, by Theorem 5.1 in \cite {HaSh2}, we
know that
   $$
       \delta_{top}(x_1,\ldots,x_m)\le \kappa\delta(x_1,\ldots,x_m),
   $$
where $\kappa\delta(x_1,\ldots,x_m)$ is the Voiculescu's free
dimension capacity in \cite{Voi}. By \cite{Jung3} or Proposition 1
in \cite{HaSh}, we have
$$  \kappa\delta(x_1,\ldots,x_m)\le 1- \frac 1 {n^2}.$$
Therefore,
$$
 \delta_{top}(x_1,\ldots,x_m) \le 1- \frac 1 {n^2}.
$$
\end{proof}

\subsection{Some lammas}

In this subsection, we let $n,t$ be some positive integers and
$k=nt$. Let
$$
A =\left (
   \begin{aligned}
     1\cdot I_t \quad & 0 \quad & \quad \cdots \quad &\quad
   0\\
   0 \quad & 2\cdot I_t \quad & \quad \cdots \quad &\quad
     0\\
     \cdots \quad & \quad \cdots \quad & \quad \cdots \quad &\quad
     \cdots\\
     0 \quad & \quad 0 \quad & \quad \cdots \quad &\quad
     n\cdot I_t\\
   \end{aligned}
 \right ) \quad \text { and } \quad W= \left (
   \begin{aligned}
     0\quad & 1\cdot I_t  \quad & \quad 0\quad &\quad \cdots \quad &\quad
   0\\
   0 \quad & \quad  0\quad &1\cdot I_t \quad & \quad \cdots \quad &\quad
     0\\
     \cdots \quad & \quad \cdots \quad& \quad \ddots \quad & \quad \cdots \quad &\quad
     \cdots\\
     1 \cdot I_t& \quad 0 \quad &\quad 0 \quad & \quad \cdots \quad &\quad
    0\\
   \end{aligned}
 \right )
$$ be in $\mathcal M_k(\Bbb C)$, where $I_t$ is the identity matrix of $\mathcal M_t(\Bbb C)$.

\begin{lemma} Let $\delta>0$. Suppose   $\|U_1AU_1^*- U_2AU_2^*\|_2\le   \delta  $   and  $\|U_1WU_1^*- U_2WU_2^*\|_2\le   \delta  $ for some
unitary matrices $U_1$ and $U_2$ in $\mathcal U(k)$. Then  there are
  a unitary matrix $V_1$ in $\mathcal M_t(\Bbb C)$ and
$$
     V=\left (
   \begin{aligned}
     V_1\quad &\quad  0 \quad   &\quad \cdots \quad &\quad
   0\\
   0 \quad & V_1   & \quad \cdots \quad &\quad
     0\\
     \cdots \quad & \quad \cdots \quad  & \quad \ddots \quad &\quad
     \cdots\\
   0 \quad & \quad 0 \quad   & \quad \cdots \quad &V_1\\
   \end{aligned}
 \right ) \in   \ \mathcal U(k)
$$
so that
$$
 \|U_1-U_2V\|_2\le {14n^2\delta}.
$$
\end{lemma}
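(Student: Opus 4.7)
\medskip

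\noindent\textbf{Proof proposal.} The plan is to reduce to a single unitary and exploit the fact that the $*$-algebra generated by $A$ and $W$ is exactly $\mathcal{M}_n(\mathbb{C})\otimes I_t$ (viewing $\mathcal{M}_k(\mathbb{C})=\mathcal{M}_n(\mathbb{C})\otimes\mathcal{M}_t(\mathbb{C})$), so its commutant is $I_n\otimes\mathcal{M}_t(\mathbb{C})$. The desired $V$ must therefore live in $I_n\otimes\mathcal{U}(t)$. Setting $U=U_2^\ast U_1$ and using unitary invariance of $\|\cdot\|_2$, the two hypotheses become $\|UA-AU\|_2\le\delta$ and $\|UW-WU\|_2\le\delta$, and the desired conclusion becomes $\|U-V\|_2\le 14n^2\delta$ with $V=I_n\otimes V_1$.

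Write $U=(U_{ij})_{i,j=1}^n$ with $U_{ij}\in\mathcal{M}_t(\mathbb{C})$. A direct computation with $A=\mathrm{diag}(1\cdot I_t,\dots,n\cdot I_t)$ gives
\[
\delta^2\ \ge\ \|UA-AU\|_2^2\ =\ \frac{1}{k}\sum_{i,j}(i-j)^2\,\mathrm{Tr}(U_{ij}^\ast U_{ij}),
\]
so in particular $\sum_{i\ne j}\mathrm{Tr}(U_{ij}^\ast U_{ij})\le k\delta^2$; the off-diagonal blocks are small. Similarly, using the cyclic-shift structure of $W$, I would compute that $(UW-WU)_{ij}=U_{i,j-1}-U_{i+1,j}$ (indices mod $n$), so
\[
\sum_{i,j}\|U_{i,j-1}-U_{i+1,j}\|_{\mathrm{HS}}^2\ \le\ k\delta^2.
\]
Summing just the contributions with $j=i+1$ and then telescoping and applying Cauchy--Schwarz, I would conclude $\|U_{ii}-U_{11}\|_{\mathrm{HS}}^2\le n k\delta^2$ for every $i$. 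Taking $Y=I_n\otimes U_{11}$, combining the two estimates yields
\[
\|U-Y\|_{\mathrm{HS}}^2\ =\ \sum_i\|U_{ii}-U_{11}\|_{\mathrm{HS}}^2+\sum_{i\ne j}\|U_{ij}\|_{\mathrm{HS}}^2\ \le\ n^2 k\delta^2+k\delta^2,
\]
hence $\|U-Y\|_2\le n\sqrt{2}\,\delta$.

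To replace $U_{11}$ by an honest unitary $V_1$, I would take the polar decomposition $U_{11}=V_1|U_{11}|$ (extending the partial isometry to a unitary on the kernel if necessary). Since $(U^\ast U)_{11}=\sum_l U_{l1}^\ast U_{l1}=I_t$, we have $0\le I_t-U_{11}^\ast U_{11}=\sum_{l\ne 1}U_{l1}^\ast U_{l1}\le I_t$, and for $0\le X\le I_t$ one has $\mathrm{Tr}(X^2)\le\mathrm{Tr}(X)$; together with the off-diagonal bound this gives $\|I_t-U_{11}^\ast U_{11}\|_{\mathrm{HS}}^2\le k\delta^2$. Now spectrally: if $\mu_i$ are eigenvalues of $U_{11}^\ast U_{11}$ and $\lambda_i=\sqrt{\mu_i}$ the singular values of $U_{11}$, the identity $(1-\lambda_i)(1+\lambda_i)=1-\mu_i$ and $\lambda_i\ge 0$ give the linear inequality $|1-\lambda_i|\le|1-\mu_i|$. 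Therefore $\||U_{11}|-I_t\|_{\mathrm{HS}}^2\le k\delta^2$, hence $\|U_{11}-V_1\|_{\mathrm{HS}}\le\sqrt{k}\,\delta$, which translates to $\|I_n\otimes(U_{11}-V_1)\|_2\le\sqrt{n}\,\delta$. Combining via the triangle inequality, $\|U_1-U_2 V\|_2=\|U-I_n\otimes V_1\|_2\le(n\sqrt{2}+\sqrt{n})\delta\le 14n^2\delta$.

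\medskip

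\noindent\textbf{Main obstacle.} The tight part is the polar-decomposition step: one needs a \emph{linear} in $\delta$ control of $\|U_{11}-V_1\|_{\mathrm{HS}}$, not a square-root bound. The naive passage from $\|U_{11}^\ast U_{11}-I_t\|_{\mathrm{HS}}$ to $\||U_{11}|-I_t\|_{\mathrm{HS}}$ via functional calculus with $\sqrt{\cdot}$ would only give $O(\sqrt{\delta})$, which is too weak. The saving point is the elementary scalar inequality $|1-\sqrt{\mu}|\le|1-\mu|$ for $\mu\ge 0$, applied eigenvalue by eigenvalue. Everything else is bookkeeping: extracting the two block estimates from the commutator bounds, telescoping along the diagonal using $W$, and converting between $\|\cdot\|_{\mathrm{HS}}$ and the normalized $\|\cdot\|_2$ to pick up the $1/\sqrt{t}$ factor that makes $k=nt$ cancel correctly.
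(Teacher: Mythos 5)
Your proof is correct and follows the same strategy as the paper's: reduce to $U=U_2^\ast U_1$, use the $A$-commutator to control the off-diagonal blocks, use the $W$-commutator plus telescoping to make the diagonal blocks nearly constant, and finish with a polar decomposition together with the scalar inequality $|1-\sqrt{\mu}|\le|1-\mu|$ to produce a genuine unitary with a linear-in-$\delta$ bound. Your bookkeeping---applying the $W$-commutator directly to $U$ rather than to its diagonal part $S$, using Cauchy--Schwarz in the telescope, and bounding $\|I_t-U_{11}^\ast U_{11}\|_{\mathrm{HS}}$ via $(U^\ast U)_{11}=I_t$ and $\mathrm{Tr}(X^2)\le\mathrm{Tr}(X)$ for $0\le X\le I$---is a bit cleaner and yields a sharper constant than the paper's $14n^2$, but the underlying argument is the same.
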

\begin{proof}
Assume that
$$U_2^*U_1 =\left (
   \begin{aligned}
     U_{11} \quad & \quad U_{12} \quad & \quad \cdots \quad &\quad
     U_{1, n}\\
     U_{21} \quad & \quad U_{22} \quad & \quad \cdots \quad &\quad
     U_{2, n}\\
     \cdots \quad & \quad \cdots \quad & \quad \cdots \quad &\quad
     \cdots\\
     U_{n,1} \quad & \quad U_{n,2} \quad & \quad \cdots \quad &\quad
     U_{n,n}\\
   \end{aligned}
 \right )  \in   \ \mathcal U(k) $$ where each
$U_{i,j}$ is a $t\times t$ matrix for all $1\le i,j\le n$.

 Let$$ S =\left ( \begin{aligned}
     U_{11} \quad & \quad 0 \quad & \quad \cdots \quad &\quad 0 \quad   \\
     0 \quad & \quad U_{22} \quad & \quad \cdots \quad  &\quad 0 \quad  \\
     \cdots \quad & \quad \cdots \quad & \quad \ddots \quad & \quad \cdots \quad \\
    0 \quad & \quad 0 \quad & \quad \cdots \quad &\quad U_{n ,n}
   \end{aligned}\right).
$$ It is easy to see that  $\| S\|_2\le 1$ and
$$
   \begin{aligned}
    \delta^2 &\ge    \|U_1AU_1^*- U_2AU_2^*\|_2^2 = \frac 1 k Tr((U_2^*U_1A-
    AU_2^*U_1)(U_2^*U_1A-
    AU_2^*U_1)^*)\\
    &=\frac 1 k \sum_{1\le i\ne j\le m}Tr(| i- j|^2U_{ij}U_{ij}^*)\\
    &\ge \frac 1 k   \sum_{1\le i\ne j\le m} Tr(U_{ij}U_{ij}^*).
   \end{aligned}
$$
 Hence
\begin{equation}\| U_1- U_2S \|_2 =\| U_2^*U_1-S \|_2=\sqrt{\frac 1 k \sum_{1\le i\ne j\le m}  Tr(U_{ij}U_{ij}^*)}\le   {  \delta } .
\end{equation}
Thus, $$ \begin{aligned} &\left\| \left ( \begin{aligned}
     U_{22} \quad & \quad 0 \quad & \quad \cdots \quad &\quad 0 \quad   \\
     0 \quad & \quad U_{33} \quad & \quad \cdots \quad  &\quad 0 \quad  \\
     \cdots \quad & \quad \cdots \quad & \quad \ddots \quad & \quad \cdots \quad \\
    0 \quad & \quad 0 \quad & \quad \cdots \quad &\quad U_{11}
   \end{aligned}\right)-\left ( \begin{aligned}
     U_{11} \quad & \quad 0 \quad & \quad \cdots \quad &\quad 0 \quad   \\
     0 \quad & \quad U_{22} \quad & \quad \cdots \quad  &\quad 0 \quad  \\
     \cdots \quad & \quad \cdots \quad & \quad \ddots \quad & \quad \cdots \quad \\
    0 \quad & \quad 0 \quad & \quad \cdots \quad &\quad U_{n ,n}
   \end{aligned}\right)\right\|_2\\&\quad \\&\qquad
   =\|W^*SW-S\|_2 =\| SW-WS\|_2=\|U_2^*U_1W-WU_2^*U_1-(U_2^*U_1-S)W+W(U_2^*U_1-S)   \|_2\le 3\delta. \end{aligned} $$
   It follows that
   \begin{equation}
   \frac 1 {\sqrt k} \sqrt{Tr
((U_{j,j}-U_{j+1,j+1})(U_{j,j}-U_{j+1,j+1})^*)}\le 3\delta,\ \ \forall \ 1\le j\le n-1.
   \end{equation}
Let $$ X= \left ( \begin{aligned}
     U_{11} \quad & \quad 0 \quad & \quad \cdots \quad &\quad 0 \quad   \\
     0 \quad & \quad U_{11} \quad & \quad \cdots \quad  &\quad 0 \quad  \\
     \cdots \quad & \quad \cdots \quad & \quad \ddots \quad & \quad \cdots \quad \\
    0 \quad & \quad 0 \quad & \quad \cdots \quad &\quad U_{11}
   \end{aligned}\right) . $$
By inequality (3.2.2), we have
 \begin{align}
\|S-X\|_2 \notag &\le \frac 1 {\sqrt k}\sqrt{\sum_{i=2}^n Tr
((U_{11}-U_{ii})(U_{11}-U_{ii})^*)}\le  \frac 1 {\sqrt k}\sum_{i=2}^n \sqrt{Tr
((U_{11}-U_{ii})(U_{11}-U_{ii})^*)}\notag \\
& \le \frac 1 {\sqrt k}\sum_{i=2}^n \sum_{j=1}^{i-1}\sqrt{Tr
((U_{j,j}-U_{j+1,j+1})(U_{j,j}-U_{j+1,j+1})^*)} < 3n^2\delta.\end{align}
Let $U_{11}=V_1H$ be the polar decomposition of $U_{11}$ in
$\mathcal M_t(\Bbb C)$ and
$$
  V= \left ( \begin{aligned}
     V_1 \quad & \quad 0 \quad & \quad \cdots \quad &\quad 0 \quad   \\
     0 \quad & \quad V_1 \quad & \quad \cdots \quad  &\quad 0 \quad  \\
     \cdots \quad & \quad \cdots \quad & \quad \ddots \quad & \quad \cdots \quad \\
    0 \quad & \quad 0 \quad & \quad \cdots \quad &\quad V_1
   \end{aligned}\right) .
$$
Note $\|H\|=\|U_{11}\|\le \|S\|\le 1.$ We have
$$\begin{aligned}
\|X-V\|_2&= \|\left ( \begin{aligned}
     H \quad & \quad 0 \quad & \quad \cdots \quad &\quad 0 \quad   \\
     0 \quad & \quad H \quad & \quad \cdots \quad  &\quad 0 \quad  \\
     \cdots \quad & \quad \cdots \quad & \quad \ddots \quad & \quad \cdots \quad \\
    0 \quad & \quad 0 \quad & \quad \cdots \quad &\quad H
   \end{aligned}\right) -I\|_2\\
   &\le \|\left ( \begin{aligned}
     H^2 \quad & \quad 0 \quad & \quad \cdots \quad &\quad 0 \quad   \\
     0 \quad & \quad H^2 \quad & \quad \cdots \quad  &\quad 0 \quad  \\
     \cdots \quad & \quad \cdots \quad & \quad \ddots \quad & \quad \cdots \quad \\
    0 \quad & \quad 0 \quad & \quad \cdots \quad &\quad H^2
   \end{aligned}\right) -I\|_2\\
   &=\|X^*X-I\|_2\le 2\|S-X\|_2+ \|S^*S-I\|_2\le 6n^2\delta+2\delta,
   \end{aligned}
$$  where the last inequality follows from inequalities (3.2.1) and (3.2.3). It follows that
$$
\|U_1-U_2V\|_2\le \|U_1-U_2S\|_2+ \|S-X\|_2+ \|X-V\|_2\le 3\delta +
3n^2\delta+ 6n^2\delta+2\delta\le 14 n^2\delta.
$$

\end{proof}

\begin{lemma}Let $k=nt$  and $$\mathcal N_1= \left\{
\left ( \begin{aligned}
     V_1 \quad & \quad 0 \quad & \quad \cdots \quad &\quad 0 \quad   \\
     0 \quad & \quad  V_1 \quad & \quad \cdots \quad  &\quad 0 \quad  \\
     \cdots \quad & \quad \cdots \quad & \quad \ddots \quad & \quad \cdots \quad \\
    0 \quad & \quad 0 \quad & \quad \cdots \quad &\quad  V_1
   \end{aligned}\right )\in \mathcal U(k) \ | \  V_1 \in  \mathcal U(t)\right \} \subset \mathcal M_k(\Bbb
C). $$
 For every $U\in \mathcal U(k)$, let
$$\begin{aligned}
 \Sigma(U)&= \{U_1\in \mathcal U(k)  \ | \ \exists \text{ a unitary
matrix $V $   in $\mathcal N_1$}  \text{ such that $\|U_1-UV\|_2\le
  { 14 n^2 \delta} $ }\}.\end{aligned}
$$ Then
$$
\mu( \Sigma(U)) \le (C_1\cdot 30n^2\delta )^{k^2}\cdot \left
(\frac{C }{ \delta}\right )^{t^2 },
$$ where $\mu$ is the normalized Haar measure on $\mathcal U(k)$ and $C, C_1$ are     constants independent of $t,\delta$.
\end{lemma}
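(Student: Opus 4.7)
The plan is to reduce to $U = I$ by Haar invariance, cover the subgroup $\mathcal{N}_1$ efficiently by a $\delta$-net, and bound the Haar measure of each ball in the resulting cover of $\Sigma(I)$.

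Since both $\mu$ and $\|\cdot\|_2$ are left-invariant under unitary multiplication, $\mu(\Sigma(U)) = \mu(\Sigma(I))$, and it suffices to bound the latter. A short computation shows that $V \mapsto V_1$ is an isometry from $(\mathcal{N}_1, \|\cdot\|_2)$ onto $(\mathcal{U}(t), \|\cdot\|_2)$, since $\|V - V'\|_2^2 = \frac{n}{k}\,Tr((V_1 - V_1')^*(V_1 - V_1')) = \|V_1 - V_1'\|_{2,t}^2$. Applying the classical covering-number estimate for $\mathcal{U}(t)$ in its normalized trace norm, I obtain a $\delta$-net $V^{(1)}, \ldots, V^{(N)} \subset \mathcal{N}_1$ with $N \le (C/\delta)^{t^2}$. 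The triangle inequality then gives
$$
\Sigma(I) \subset \bigcup_{i=1}^N \{U_1 \in \mathcal{U}(k) : \|U_1 - V^{(i)}\|_2 \le 14n^2\delta + \delta \le 15n^2\delta\}.
$$

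To finish, I would apply the standard bound that the Haar measure of a $\|\cdot\|_2$-ball in $\mathcal{U}(k)$ of radius $r$ is at most $(C_1 \cdot 2r)^{k^2}$ for an absolute constant $C_1$; this is obtained by pulling back through the exponential map from the $k^2$-dimensional real vector space of Hermitian matrices and comparing with Euclidean volume. Summing over the cover yields
$$
\mu(\Sigma(U)) \le N \cdot (C_1 \cdot 30n^2\delta)^{k^2} \le \left(\frac{C}{\delta}\right)^{t^2}(C_1 \cdot 30n^2\delta)^{k^2},
$$
as required. The only substantive content beyond the reduction is the pair of classical volume estimates---a small $\delta$-net for $\mathcal{U}(t)$ and an upper bound on the Haar measure of a small $\|\cdot\|_2$-ball in $\mathcal{U}(k)$. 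Both have constants independent of $t$, $k$, and $\delta$ and are well documented in the free-probability literature, so I would invoke them rather than reprove them here. The main conceptual point (and the source of the mixed $\delta^{k^2}$ and $\delta^{-t^2}$ scaling) is simply that $\Sigma(U)$ is a tubular neighborhood of a $t^2$-dimensional submanifold inside a $k^2$-dimensional one.
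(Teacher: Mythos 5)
Your proposal is correct and follows essentially the same strategy as the paper: cover $\mathcal N_1$ by a $\delta$-net of size $(C/\delta)^{t^2}$, transport it by $U$ to a cover of $\Sigma(U)$ by balls of radius $O(n^2\delta)$, and then bound the Haar measure of each such ball by $(C_1\cdot O(n^2\delta))^{k^2}$. The only cosmetic differences are that you first reduce to $U=I$ via left-invariance of $\mu$ and $\|\cdot\|_2$ (the paper does this implicitly), and you make explicit the isometry $V\mapsto V_1$ between $(\mathcal N_1,\|\cdot\|_2)$ and $(\mathcal U(t),\|\cdot\|_2)$, which the paper states only as "by computing the covering number."
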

\begin{proof}  \ By computing the covering number of   $\mathcal N_1$
  by $
\delta $-$\|\cdot\|_2$-balls in $\mathcal U(k)$, we know
$$\begin{aligned}
\nu_2(   \mathcal N_1 ,
\delta )& \le \left (\frac {C }{ \delta}\right )^{t^2},
\end{aligned}
$$ where $C$ is a   constant independent of $t,\delta$. Thus, by Lemma 2.1,  the covering number of the set $\Sigma(U)$ by the $30 n^2 \delta $-$\|\cdot\|_2$-balls
 in $\mathcal U(k)$ is bounded by
$$
\nu_2(\Sigma(U),  {30 n^2 \delta} ) \le \nu_2(  \mathcal N_1 ,  \delta  )\le\left (\frac{C }{
\delta}\right )^{t^2}.
$$
But the ball of radius $30 n^2 \delta $ in $\mathcal U(k)$ has the
volume bounded by
$$
\mu(\text {ball of radius $30 n^2 \delta$ in $\mathcal U(k)$})\le (C_1\cdot 30 n^2
\delta )^{k^2},
$$ where $C_1$ is a universal constant.
Thus
$$
\mu(\Sigma(U)) \le (C_1\cdot 30 n^2 \delta )^{k^2}\cdot  \left
(\frac{C }{ \delta}\right )^{t^2}.
$$
\end{proof}
\begin{lemma}
Let $A$, $W$ and $$\mathcal N_1= \left\{
\left ( \begin{aligned}
     V_1 \quad & \quad 0 \quad & \quad \cdots \quad &\quad 0 \quad   \\
     0 \quad & \quad  V_1 \quad & \quad \cdots \quad  &\quad 0 \quad  \\
     \cdots \quad & \quad \cdots \quad & \quad \ddots \quad & \quad \cdots \quad \\
    0 \quad & \quad 0 \quad & \quad \cdots \quad &\quad  V_1
   \end{aligned}\right )\in \mathcal U(k) \ | \  V_1 \in  \mathcal U(t)\right \} \subset \mathcal M_k(\Bbb
C). $$  be defined as above. Let
$$
  \Omega(A, W) = \{(U^*AU, \  \frac 1 2U^*( {W+W^*} )U, \  \frac 1
  {2\sqrt{-1}}U^*(
  {W-W^*})U) \ | \ U\in \mathcal U(k)\}.
$$
Then, for each $\delta>0$,
$$
\nu_2(\Omega(A,W),\frac 1 4\delta)  \ge(C_1\cdot 30 n^2 \delta
)^{-k^2}\cdot  \left (\frac{C }{ \delta}\right )^{-t^2},
$$ where $C_1, C$ are some universal constants independent of $t,\delta$.
\end{lemma}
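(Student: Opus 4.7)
The plan is to translate the lower bound on $\nu_2(\Omega(A,W),\tfrac14\delta)$ into a volume comparison on $\mathcal U(k)$, using the natural parametrization
$$\Phi:\mathcal U(k)\to\Omega(A,W),\qquad U\mapsto\Bigl(U^*AU,\ \tfrac12U^*(W+W^*)U,\ \tfrac{1}{2\sqrt{-1}}U^*(W-W^*)U\Bigr).$$
If $\{B_1,\dots,B_N\}$ is a minimal covering of $\Omega(A,W)$ by $\tfrac14\delta$-$\|\cdot\|_2$-balls with centers in $\Omega(A,W)$, then $\{\Phi^{-1}(B_j)\}_{j=1}^N$ covers $\mathcal U(k)$. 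Since the normalized Haar measure $\mu$ on $\mathcal U(k)$ has total mass one, a uniform upper bound of the form $\mu(\Phi^{-1}(B_j))\le(C_1\cdot 30n^2\delta)^{k^2}(C/\delta)^{t^2}$ would immediately yield the claimed inequality $N\ge(C_1\cdot 30n^2\delta)^{-k^2}(C/\delta)^{-t^2}$.

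To establish that uniform bound, I would fix $U_0\in\Phi^{-1}(B_j)$ and show that every other $U\in\Phi^{-1}(B_j)$ is close in $\|\cdot\|_2$ to a left-translate of $U_0$ by an element of $\mathcal N_1$. Any two points of a $\tfrac14\delta$-ball are within $\tfrac12\delta$ of each other in the product $\|\cdot\|_2$-norm, so each of the three coordinates of $\Phi(U)$ and $\Phi(U_0)$ differ by at most $\tfrac12\delta$; recombining $W=\tfrac12(W+W^*)+\sqrt{-1}\cdot\tfrac{1}{2\sqrt{-1}}(W-W^*)$ through the triangle inequality then yields
$$\|U^*AU-U_0^*AU_0\|_2\le\tfrac12\delta\qquad\text{and}\qquad\|U^*WU-U_0^*WU_0\|_2\le\delta.$$
Lemma 3.2.1, applied to the unitaries $U^*$ and $U_0^*$ at scale $\delta$, produces $V\in\mathcal N_1$ with $\|U_0^*-U^*V\|_2\le 14n^2\delta$; taking adjoints and using $V^*\in\mathcal N_1$, this rewrites as $\|U-V^*U_0\|_2\le 14n^2\delta$. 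Hence $\Phi^{-1}(B_j)\subseteq\{U\in\mathcal U(k):\exists V'\in\mathcal N_1,\ \|U-V'U_0\|_2\le 14n^2\delta\}$.

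To finish, I would quote Lemma 3.2.2. The set on the right above is the $14n^2\delta$-$\|\cdot\|_2$-neighborhood of the left coset $\mathcal N_1 U_0$, which by right-invariance of $\mu$ has the same measure as the $14n^2\delta$-neighborhood of $\mathcal N_1$ itself and, by left-invariance, the same measure as the right-translate set $\Sigma(U_0)$ treated in Lemma 3.2.2. That lemma therefore supplies $\mu(\Phi^{-1}(B_j))\le(C_1\cdot 30n^2\delta)^{k^2}(C/\delta)^{t^2}$, and summing over $j$ completes the proof. Essentially all the hard work has already been packaged into Lemmas 3.2.1 and 3.2.2; the only step requiring attention is the bookkeeping of the constant $\tfrac14$ in the covering radius, which is chosen so that after a factor $\tfrac12$ from the ball's diameter and a further factor from reassembling $W$ out of its self-adjoint and skew-adjoint parts, Lemma 3.2.1 is fed exactly at scale $\delta$.
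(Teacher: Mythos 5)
Your argument is correct and relies on the same two ingredients as the paper's proof: Lemma 3.1 to pass from proximity in $\Omega(A,W)$ back to proximity of unitaries modulo $\mathcal N_1$, and the Haar-volume bound of Lemma 3.2 (you correctly note that bi-invariance of $\mu$ lets one apply it to a left-coset neighborhood $\mathcal N_1 U_0$ just as well as to the right-coset neighborhoods $\Sigma(U)$ treated there). Where the paper builds a packing via the greedy ``parking'' construction and then converts separation into a lower bound on the covering number, you run the dual covering/volume-counting argument directly by bounding $\mu(\Phi^{-1}(B_j))$ for each covering ball and summing over $j$; this is a cosmetic reorganization of the same proof rather than a genuinely different route.
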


\begin{proof}
For every $U\in \mathcal U(k)$, define
$$
\Sigma(U)= \{U_1\in \mathcal U(k)  \ | \ \exists \text{ a unitary
matrix } V\in \mathcal N_1, \text{ such that } \ \|U_1- UV\|_2\le
 14 n^2 \delta \}.
$$
By preceding lemma, we have
$$
\mu(\Sigma(U)) \le (C_1\cdot 30 n^2 \delta )^{k^2}\cdot  \left
(\frac{C }{ \delta}\right )^{t^2}.
$$
 A ``parking" (or  exhausting) argument will show
the existence of a family of unitary elements $\{ U_i
\}_{i=1}^N\subset \mathcal U(k)$ such that
$$
N\ge (C_1\cdot 30 n^2 \delta )^{-k^2}\cdot  \left (\frac{C }{
\delta}\right )^{-t^2}
$$  and
$$
  U_i \  \text { is not contained in } \cup_{j=1}^{i-1}\Sigma (U_j), \qquad \forall \ i=1,\ldots, N.
$$

From the definition of each $\Sigma(U_j)$, it follows that  $$ \|
U_i-U_jV\|_2\ge { {14 n^2 \delta } } ,\qquad \forall \ \text {
unitary matrix } V\in \mathcal N_1,   \forall \ 1\le j<i \le N.
$$
By Lemma 3.1, we know that
$$
\|U_iAU_i^*-U_jAU_j^*\|_2>  \delta   \qquad  \text { or } \qquad
\|U_iWU_i^*-U_jWU_j^*\|_2>  \delta,
$$
which implies that
$$
\nu_2(\Omega(A,W),\frac 1 4\delta) \ge N\ge(C_1\cdot 30 n^2 \delta
)^{-k^2}\cdot  \left (\frac{C }{ \delta}\right )^{-t^2}.
$$
\end{proof}

\subsection{Lower-bound}
Suppose $x_1,\ldots, x_m$ is a family of self-adjoint elements that
generate $\mathcal M_n(\Bbb C)$. Let $\{e_{st}\}_{s,t=1}^n$ be a
canonical system of matrix units in $\mathcal M_n(\Bbb C)$. We might
assume that
$$
x_i= \sum_{s,t=1}^n x_{st}^{(i)} \cdot e_{st}, \qquad \forall \ 1\le
i\le m,
$$ for some $\{x_{st}^{(i)}\}_{1\le s,t\le n, 1\le i\le m} \subset
\Bbb C$. Let $$ a= \sum_{i=1}^n i\cdot e_{ii} \qquad \text { and }
\qquad w= \sum_{i=1}^{n-1}   e_{i,i+1}+   e_{n,1}.
$$
Note that $\mathcal M_n(\Bbb C)$ is a finite dimensional C$^*$
algebra. It is easy to see that there exist noncommutative
polynomials $P_1(x_1,\ldots,x_m)$ and $P_2(x_1,\ldots,x_m)$ such
that
$$
a= P_1(x_1,\ldots,x_m)  \qquad \text { and } \qquad
w=P_2(x_1,\ldots,x_m).
$$

The proof of Lemma 5.1 in \cite {HaSh2} can be easily adapted to
prove the following Lemma 3.4.

\begin{lemma}
We have
$$
\delta_{top}(a, \frac {w+w^*} 2  , \frac  {w-w^*} {2\sqrt{-1}}
:x_1,\ldots,x_m)\le \delta_{top}(x_1,\ldots,x_m).
$$
\end{lemma}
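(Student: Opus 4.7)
The plan is to exploit the fact that $a = P_1(x_1,\ldots,x_m)$ and $w = P_2(x_1,\ldots,x_m)$ are \emph{exact} polynomial identities in $\mathcal M_n(\mathbb C)$, so that on operator-norm-bounded sets the map
$$
(D_1,\ldots,D_m)\longmapsto \Bigl(P_1(D),\ \tfrac{P_2(D)+P_2(D)^{*}}{2},\ \tfrac{P_2(D)-P_2(D)^{*}}{2\sqrt{-1}}\Bigr)
$$
is Lipschitz in the operator norm, with Lipschitz constant $L=L(R,P_1,P_2)$ depending only on $R$ and the polynomials. This is the core mechanism already used in the proof of Lemma 5.1 of \cite{HaSh2}; the adaptation to three target coordinates is essentially bookkeeping.

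First I would fix $R>\max_i\|x_i\|+1$ and $R'>R+\|a\|+\|w\|+1$. Given $\epsilon>0$ and polynomials $P_1,\ldots,P_r$ used to test membership in $\Gamma^{(top)}_{R'}(a,\tfrac{w+w^{*}}{2},\tfrac{w-w^{*}}{2\sqrt{-1}}, x_1,\ldots,x_m;k,\epsilon,P_1,\ldots,P_r)$, I would choose $\epsilon'>0$ small and an enlarged finite list $P_1',\ldots,P_{r'}'$ so that whenever $(D_1,\ldots,D_m)\in \Gamma^{(top)}_{R}(x_1,\ldots,x_m;k,\epsilon',P_1',\ldots,P_{r'}')$, the augmented tuple
$$
\bigl(P_1(D),\ \tfrac{P_2(D)+P_2(D)^{*}}{2},\ \tfrac{P_2(D)-P_2(D)^{*}}{2\sqrt{-1}},\ D_1,\ldots,D_m\bigr)
$$
lies in $\Gamma^{(top)}_{R'}(a,\tfrac{w+w^{*}}{2},\tfrac{w-w^{*}}{2\sqrt{-1}}, x_1,\ldots,x_m;k,\epsilon,P_1,\ldots,P_r)$. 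Conversely, by incorporating the defining relations $A-P_1(D)=0$, $2B-(P_2(D)+P_2(D)^{*})=0$ and $2\sqrt{-1}C-(P_2(D)-P_2(D)^{*})=0$ into the list of test polynomials, any $(A,B,C)$ in the projected microstate space will admit a companion $(D_1,\ldots,D_m)$ satisfying $\|A-P_1(D)\|,\ \|B-\tfrac{P_2(D)+P_2(D)^{*}}{2}\|,\ \|C-\tfrac{P_2(D)-P_2(D)^{*}}{2\sqrt{-1}}\|<\epsilon$.

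Next, I would take a minimal covering of $\Gamma^{(top)}_{R}(x_1,\ldots,x_m;k,\epsilon',P_1',\ldots,P_{r'}')$ by $\omega$-$\|\cdot\|$-balls with centers $\{D^{(i)}\}_{i=1}^{N}$, where $N=\nu_\infty(\Gamma^{(top)}_{R}(x_1,\ldots,x_m;k,\epsilon',P_1',\ldots,P_{r'}'),\omega)$. By Lipschitz-ness of the polynomial map, the images
$$
\Bigl\{\bigl(P_1(D^{(i)}),\ \tfrac{P_2(D^{(i)})+P_2(D^{(i)})^{*}}{2},\ \tfrac{P_2(D^{(i)})-P_2(D^{(i)})^{*}}{2\sqrt{-1}}\bigr)\Bigr\}_{i=1}^{N}
$$
together with a uniform $\epsilon$-correction cover the projected microstate space by $(L\omega+\epsilon)$-$\|\cdot\|$-balls. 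Choosing $\epsilon\le \omega$ yields
$$
\nu_\infty\bigl(\Gamma^{(top)}_{R'}(a,\tfrac{w+w^{*}}{2},\tfrac{w-w^{*}}{2\sqrt{-1}}:x_1,\ldots,x_m;k,\epsilon,P_1,\ldots,P_r),\ (L+1)\omega\bigr)\le N.
$$

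Finally, dividing by $-k^{2}\log((L+1)\omega)$, taking $\limsup_{k\to\infty}$, then $\inf_{\epsilon,r}$, and $\limsup_{\omega\to 0^{+}}$, and using $\log\omega/\log((L+1)\omega)\to 1$ as $\omega\to 0^{+}$, I would obtain the stated inequality. The main obstacle is purely logical: arranging the quantifier chase so that the choice of $\epsilon'$ and the enlarged polynomial list $P'_j$ depend only on $\epsilon$, $R$ and the original $P_j$, which is handled by closing the list of polynomials under the relations defining $a$ and $w$ and standard continuity of polynomials on norm-bounded sets—exactly as in the one-variable case \cite{HaSh2}.
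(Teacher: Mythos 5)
Your proposal is correct and follows essentially the same route as the paper, which simply states that the result ``can be easily adapted'' from Lemma 5.1 of \cite{HaSh2} without spelling out the details; you have supplied a sound rendition of that adaptation (exact polynomial relations $a=P_1(x)$, $w=P_2(x)$, Lipschitz continuity of polynomials on norm-bounded sets, closure of the test-polynomial list under the defining relations, and a covering-number comparison). The only detail left implicit is that the images $\Phi(D^{(i)})$ need not lie in the projected microstate space, so one should invoke Lemma 2.1 to replace the $(L+1)\omega$-balls by $2(L+1)\omega$-balls centered in the set; this costs only a harmless factor of $2$ inside the logarithm and does not affect the limit.
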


\begin{lemma}
We have
$$
\delta_{top}(a, \frac {w+w^*} 2  , \frac  {w-w^*} {2\sqrt{-1}}
:x_1,\ldots,x_m) \ge 1-\frac 1 {n^2}.
$$

\end{lemma}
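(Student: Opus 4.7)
The plan is to reduce everything to Lemma 3.3. The key observation is that for $k = nt$, the explicit matrices $A, W \in \mathcal{M}_k(\mathbb{C})$ from Subsection 3.2 are the images of $a, w \in \mathcal{M}_n(\mathbb{C})$ under the unital $*$-homomorphism $\iota_t \colon \mathcal{M}_n(\mathbb{C}) \to \mathcal{M}_k(\mathbb{C})$ sending $e_{st} \mapsto E_{st} \otimes I_t$. Since $\iota_t$ is injective, hence isometric, for every $U \in \mathcal{U}(k)$ the tuple $(U^* \iota_t(x_1) U, \ldots, U^* \iota_t(x_m) U)$ is an \emph{exact} microstate of $(x_1, \ldots, x_m)$: every polynomial evaluation has the same operator norm as its evaluation in $\mathcal{M}_n(\mathbb{C})$. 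Consequently this tuple lies in $\Gamma^{(top)}_R(x_1, \ldots, x_m; k, \epsilon, P_1, \ldots, P_r)$ for every $R > \max_i \|x_i\|$, every $\epsilon > 0$, and every finite polynomial list.

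Appending the triple $(U^* A U, \tfrac{1}{2} U^*(W+W^*) U, \tfrac{1}{2\sqrt{-1}} U^*(W-W^*) U)$ in the first three coordinates produces an exact microstate for the enlarged tuple $(a, \tfrac{w+w^*}{2}, \tfrac{w-w^*}{2\sqrt{-1}}, x_1, \ldots, x_m)$; as $U$ ranges over $\mathcal{U}(k)$ the projection to the first three coordinates traces out exactly $\Omega(A, W)$. Hence
\[
\Omega(A, W) \subseteq \Gamma^{(top)}_R\bigl(a, \tfrac{w+w^*}{2}, \tfrac{w-w^*}{2\sqrt{-1}} : x_1, \ldots, x_m; k, \epsilon, P_1, \ldots, P_r\bigr).
\]
Since $\|\cdot\|_2 \le \|\cdot\|$, every operator-norm $\omega$-cover is also a $\|\cdot\|_2$-cover at the same radius, so $\nu_\infty \ge \nu_2$. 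Lemma 3.3 applied at radius $\delta/4$ therefore yields the lower bound $\nu_\infty(\Gamma^{(top)}_R(\cdots), \delta/4) \ge (C_1 \cdot 30 n^2 \delta)^{-k^2} (C/\delta)^{-t^2}$.

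Finally, take logarithms and divide by $-k^2 \log(\delta/4)$. The first factor contributes $\frac{-\log(C_1 \cdot 30 n^2 \delta)}{-\log(\delta/4)} \to 1$ as $\delta \to 0^+$; the second contributes $-\frac{t^2}{k^2} \cdot \frac{\log(C/\delta)}{-\log(\delta/4)} \to -\frac{1}{n^2}$, where the crucial equality $t^2/k^2 = 1/n^2$ is the whole reason for choosing $k = nt$. The resulting lower bound is independent of $k$, $\epsilon$, and $r$, so it survives $\limsup_{k\to\infty}$ (restricting to the subsequence $k = nt$ loses nothing), the $\inf_{\epsilon, r}$, and finally $\limsup_{\omega \to 0^+}$ with $\omega = \delta/4$, yielding $\delta_{top}(a, \tfrac{w+w^*}{2}, \tfrac{w-w^*}{2\sqrt{-1}} : x_1, \ldots, x_m) \ge 1 - 1/n^2$. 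There is no real obstacle: the combinatorial content has been placed in Lemma 3.3, and what remains is the careful choice of asymptotic regime ($k = nt$, $t \to \infty$, then $\delta \to 0$) so that the exponent ratio $t^2/k^2$ produces precisely the target deficit $1/n^2$.
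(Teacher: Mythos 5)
Your argument is essentially the paper's own: use the exact embedding $a, w, x_i \mapsto a\otimes I_t, w\otimes I_t, x_i\otimes I_t$ into $\mathcal M_{nt}(\mathbb C)$ to show $\Omega(A,W)$ sits inside the relative microstate space, invoke Lemma~3.3 for a $\nu_2$ lower bound, pass to $\nu_\infty$, and then observe that the exponent ratio $t^2/k^2 = 1/n^2$ produces exactly the deficit. One small slip: for a $3$-tuple $(A_1,A_2,A_3)$ one has $\|\cdot\|_2 \le \sqrt{3}\,\|\cdot\|$, not $\|\cdot\|_2 \le \|\cdot\|$, so the clean statement is $\nu_\infty(\cdot,\omega/\sqrt 3) \ge \nu_2(\cdot,\omega)$, which is what the paper uses in (3.3.1); the extra $\sqrt 3$ is a constant and is washed out in the $\delta\to 0^+$ limit, so the conclusion is unaffected.
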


\begin{proof}
Let $t$ be positive integer and $k=nt$. Note that
$$
   \begin{aligned}
      x_i&= \sum_{s,t=1}^n x_{st}^{(i)} \cdot e_{st}, \qquad \forall \ 1\le
i\le m \\
a&= \sum_{i=1}^n i\cdot e_{ii}\\ w&= \sum_{i=1}^{n-1}   e_{i,i+1}+
e_{n,1}.
   \end{aligned}
$$ We let
$$\begin{aligned}
X_i &=\left( \sum_{s,t=1}^n x_{st}^{(i)} \cdot e_{st}\right)\otimes
I_t, \qquad \forall \ 1\le
i\le m \\
A&= \left(\sum_{i=1}^n i\cdot e_{ii}\right)\otimes I_t\\ W&=
\left(\sum_{i=1}^{n-1} e_{i,i+1}+ e_{n,1}\right)\otimes
I_t\end{aligned}
$$ be matrices in $\mathcal M_k(\Bbb C)$.
It is not hard to see that, for every $t\in \Bbb N$ and $k=nt$,
$$
(A, \frac {W+W^*} 2  , \frac  {W-W^*} {2\sqrt{-1}}) \in
\Gamma_R^{(top)}(a, \frac {w+w^*} 2  , \frac  {w-w^*} {2\sqrt{-1}}
:x_1,\ldots,x_m; k,\epsilon, P_1,\ldots, P_r)
$$ when $R> \max\{\|a\|, \|x_1\|,\ldots, \|x_m\|, 1\}$,
$\epsilon>0$ and $r\ge 1$. Therefore,
$$
\Omega(A,W) \subset \Gamma_R^{(top)}(a, \frac {w+w^*} 2  , \frac
{w-w^*} {2\sqrt{-1}} :x_1,\ldots,x_m; k,\epsilon, P_1,\ldots, P_r),
$$ where $\Omega(A,W)$ is defined   in Lemma 3.3. Letting $\delta=4\omega$, by lemma 3.3, we
have
$$
\nu_2(\Gamma_R^{(top)}(a, \frac {w+w^*} 2  , \frac  {w-w^*}
{2\sqrt{-1}} :x_1,\ldots,x_m; k,\epsilon, P_1,\ldots,
P_r),\omega)\ge (C_1\cdot 120 n^2 \omega )^{-k^2}\cdot  \left
(\frac{4C }{ \omega}\right )^{-t^2},
$$ where $C_1, C$ are some constants independent of $t,\omega$.
By the definitions of the operator norm and the trace norm on
$(\mathcal M_k(\Bbb C))^{3}$, we get
 \begin{align}
\nu_\infty(\Gamma_R^{(top)} (a, \frac {w+w^*} 2  , &\frac  {w-w^*}
{2\sqrt{-1}} :x_1,\ldots,x_m; k,\epsilon, P_1,\ldots, P_r),\frac\omega{\sqrt{3}})\notag\\
&\ge \nu_2(\Gamma_R^{(top)}(a, \frac {w+w^*} 2  , \frac {w-w^*}
{2\sqrt{-1}} :x_1,\ldots,x_m; k,\epsilon, P_1,\ldots,
P_r),\omega)\notag\\
&\ge (C_1\cdot 120 n^2 \omega )^{-k^2}\cdot  \left (\frac{4C }{
\omega}\right )^{-t^2}.\end{align}  It  quickly induces that
$$
\delta_{top}(a, \frac {w+w^*} 2  , \frac  {w-w^*} {2\sqrt{-1}}
:x_1,\ldots,x_m) \ge 1-\frac 1 {n^2}.
$$
\end{proof}
Combining Lemma 3.4 and Lemma 3.5, we have the following result.
\begin{proposition}
Suppose $x_1,\ldots, x_m$ is a family of self-adjoint generators of
$\mathcal M_n(\Bbb C)$. Then
$$
\delta_{top}( x_1,\ldots,x_m) \ge 1-\frac 1 {n^2}.
$$
\end{proposition}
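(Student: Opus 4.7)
The plan is to combine the two preceding lemmas (Lemma 3.4 and Lemma 3.5) in a single short chain of inequalities, so the real content has already been carried out in the lower-bound computation of Lemma 3.5.

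First I would record the setup: since $x_1,\ldots,x_m$ generate $\mathcal{M}_n(\mathbb{C})$ as a C$^*$-algebra (equivalently, as an algebra, since $\mathcal{M}_n(\mathbb{C})$ is finite dimensional), the specific elements $a=\sum_i i\cdot e_{ii}$ and the cyclic shift $w=\sum_i e_{i,i+1}+e_{n,1}$ can be written as noncommutative polynomials $P_1,P_2$ in $x_1,\ldots,x_m$. This is the exact hypothesis that puts us in the setting of Lemma 3.4, which gives
\[
\delta_{top}\Bigl(a,\ \tfrac{w+w^*}{2},\ \tfrac{w-w^*}{2\sqrt{-1}}:x_1,\ldots,x_m\Bigr)\le \delta_{top}(x_1,\ldots,x_m).
\]

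Second, I would invoke Lemma 3.5, which, via the explicit microstates model $A=a\otimes I_t$, $W=w\otimes I_t$ for $k=nt$ and the covering-number lower bound for the orbit $\Omega(A,W)$, yields
\[
\delta_{top}\Bigl(a,\ \tfrac{w+w^*}{2},\ \tfrac{w-w^*}{2\sqrt{-1}}:x_1,\ldots,x_m\Bigr)\ge 1-\tfrac{1}{n^2}.
\]
Chaining the two displayed inequalities gives exactly the desired bound $\delta_{top}(x_1,\ldots,x_m)\ge 1-\frac{1}{n^2}$, finishing the proof.

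There is no genuine obstacle at this step: the heavy lifting (the parking/exhaustion argument on $\mathcal{U}(k)$, the reduction of $U_2^*U_1$ to a block-scalar unitary $V\in\mathcal{N}_1$ via Lemma 3.1, and the polar-decomposition cleanup) has all been discharged in Lemmas 3.1--3.3 and propagated into Lemma 3.5. The only thing one needs to verify, and which is immediate from the assumption that $x_1,\ldots,x_m$ generate $\mathcal{M}_n(\mathbb{C})$, is the polynomial expressibility of $a$ and $w$ required by Lemma 3.4. Combined with Proposition 3.1 from the previous subsection, this also yields the exact value $\delta_{top}(x_1,\ldots,x_m)=1-\frac{1}{n^2}$ announced as Theorem 3.1 in the introduction.
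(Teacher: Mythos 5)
Your proof is correct and is exactly the argument the paper uses: Proposition 3.2 is obtained by chaining the inequality of Lemma 3.4 (which needs only the polynomial expressibility of $a$ and $w$ in the generators) with the lower bound of Lemma 3.5. Nothing further is required.
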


\subsection{Conclusion}
By Proposition 3.1 and Proposition 3.2, we obtain the following
result.
\begin{theorem}
Suppose $x_1,\ldots, x_m$ is a family of self-adjoint generators of
$\mathcal M_n(\Bbb C)$. Then
$$
\delta_{top}( x_1,\ldots,x_m)= 1-\frac 1 {n^2}.
$$
\end{theorem}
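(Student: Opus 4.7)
The plan is to establish Theorem~3.1 by combining the two one-sided bounds supplied by Proposition~3.1 and Proposition~3.2. The theorem is a straightforward consequence, so the substance is in the proposal for the two propositions; I will describe the approach for each.

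For the upper bound $\delta_{top}(x_1,\ldots,x_m)\le 1-1/n^2$, I would exploit the fact that $\mathcal{M}_n(\mathbb{C})$ carries a unique tracial state $\tau_n$. This lets me pass from topological free entropy dimension to Voiculescu's free dimension capacity via Theorem~5.1 of \cite{HaSh2}, which gives $\delta_{top}(x_1,\ldots,x_m)\le \kappa\delta(x_1,\ldots,x_m)$. The capacity is already known to satisfy $\kappa\delta(x_1,\ldots,x_m)\le 1-1/n^2$ for any generating self-adjoint family of $\mathcal{M}_n(\mathbb{C})$ by Jung's computation (or by Proposition~1 of \cite{HaSh}). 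Combining these two facts is immediate, so no new work is needed here.

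For the lower bound $\delta_{top}(x_1,\ldots,x_m)\ge 1-1/n^2$, the natural strategy is to trade the original generators for a more tractable pair of canonical generators of $\mathcal{M}_n(\mathbb{C})$, namely the diagonal matrix $a=\sum_{i=1}^n i\cdot e_{ii}$ and the cyclic shift $w=\sum_{i=1}^{n-1} e_{i,i+1}+e_{n,1}$; since $\mathcal{M}_n(\mathbb{C})$ is finite-dimensional, both $a$ and $w$ are noncommutative polynomials in $x_1,\ldots,x_m$. The self-adjoint triple $(a,(w+w^*)/2,(w-w^*)/(2\sqrt{-1}))$ is a convenient substitute because, once we blow up to size $k=nt$ by tensoring with $I_t$, its unitary orbit $\Omega(A,W)$ sits inside the relevant norm-microstate space. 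By Lemma~3.4 (whose proof is the analogue of Lemma~5.1 in \cite{HaSh2}), the topological free entropy dimension of this triple in the presence of $x_1,\ldots,x_m$ is at most $\delta_{top}(x_1,\ldots,x_m)$, so it suffices to lower-bound the former by $1-1/n^2$.

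The heart of the argument, and what I expect to be the main obstacle, is the packing estimate for $\Omega(A,W)$ in the $\|\cdot\|_2$-metric. The key observation is that the simultaneous stabilizer of $A$ and $W$ in $\mathcal{U}(k)$ is (up to small error) exactly the block-scalar subgroup $\mathcal{N}_1\cong\mathcal{U}(t)$ embedded diagonally. Lemma~3.1 makes this quantitative: if two unitary conjugates of $(A,W)$ are close in $\|\cdot\|_2$, then the unitaries differ, up to error $14n^2\delta$, by an element of $\mathcal{N}_1$. Using Lemma~3.2 to bound the volume of the corresponding ``bad'' set $\Sigma(U)$ by the Haar measure, a standard greedy-packing argument (Lemma~3.3) produces an $\omega$-separated family in $\Omega(A,W)$ of cardinality at least $(C_1\cdot 30n^2\delta)^{-k^2}(C/\delta)^{-t^2}$. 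Converting the $\|\cdot\|_2$ covering number to a $\|\cdot\|$ covering number (at the price of a harmless constant, Lemma~3.5), dividing by $-k^2\log\omega$, and using $k=nt$ with $t\to\infty$, the $t^2$ correction contributes a factor of $1/n^2$ that is subtracted from $1$, yielding the desired bound $1-1/n^2$. The delicate point in the whole scheme is verifying that the stabilizer computation in Lemma~3.1 really does give the sharp exponent $t^2$ on the nose; once that is in place, Propositions~3.1 and 3.2 combine to prove Theorem~3.1.
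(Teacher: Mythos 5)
Your proposal matches the paper's argument essentially step for step: the upper bound via Theorem 5.1 of \cite{HaSh2} and Jung's capacity bound, the lower bound by passing to the canonical generators $a,w$ (Lemma 3.4), the stabilizer identification $\mathcal N_1\cong\mathcal U(t)$ made quantitative in Lemma 3.1, the volume/greedy-packing estimate for $\Omega(A,W)$ in Lemmas 3.2--3.3, and the $\|\cdot\|_2$-to-$\|\cdot\|$ conversion producing the $t^2 = k^2/n^2$ correction that yields $1-1/n^2$. This is the same route as the paper; there are no substantive deviations or gaps.
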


\section{Topological free entropy dimension in orthogonal sum of
C$^*$ algebras}

In this section, we assume that $\mathcal A$, $\mathcal B$ are
two unital C$^*$ algebras and $\mathcal A \bigoplus \mathcal B$ is
the orthogonal sum, or direct sum, of $\mathcal A$ and $\mathcal B$.
We assume that the self-adjoint elements $x_1\oplus y_1, \cdots,
x_n\oplus y_n$ generate $\mathcal A \bigoplus \mathcal B$ as a C$^*$
algebra. Thus $x_1,\ldots, x_n$ and $y_1,\ldots, y_n$, are the
families of self-adjoint generators of $\mathcal A$, or $\mathcal B$
respectively.

\subsection{Upper-bound of topological free entropy dimension in orthogonal sum of
C$^*$ algebras }

Let $R>\max\{\|x_1\|, \ldots,\|x_n\|, \|y_1\|,\ldots,\|y_n\|\}$ be a
positive number. By the definition of topological free entropy
dimension, we have the following.
\begin{lemma}
For each
$$
\alpha>\delta_{top}(x_1,\ldots,x_n) \qquad \text { and } \qquad
\beta>\delta_{top}(y_1,\ldots,y_n),
$$ (i) there is some $\frac 1 {10}>\omega_0>0$ so that, if $0<\omega<\omega_0$,
$$
   \begin{aligned}
      \inf_{r\in\Bbb N} \limsup_{k_1\rightarrow \infty} & \frac {\log(\nu_{\infty}(\Gamma_R^{(top)}(x_1,\ldots,x_n;k_1,\frac 1 r,
      P_1,\ldots,P_r),\omega))}{-k_1^2\log\omega} <\alpha;\\
       \inf_{r\in\Bbb N} \limsup_{k_2\rightarrow \infty} & \frac {\log(\nu_{\infty}(\Gamma_R^{(top)}(y_1,\ldots,y_n;k_2,\frac 1 r,
      P_1,\ldots,P_r),\omega))}{-k_2^2\log\omega} <\beta.
   \end{aligned}
$$
(ii) Thus, for each $0<\omega<\omega_0$, there is $r(\omega)\in\Bbb
N$ satisfying
$$
   \begin{aligned}
       \limsup_{k_1\rightarrow \infty} & \frac {\log(\nu_{\infty}(\Gamma_R^{(top)}(x_1,\ldots,x_n;k_1,\frac 1 {r(\omega)},
      P_1,\ldots,P_{r(\omega)}),\omega))}{-k_1^2\log\omega} <\alpha;\\
        \limsup_{k_2\rightarrow \infty} & \frac {\log(\nu_{\infty}(\Gamma_R^{(top)}(y_1,\ldots,y_n;k_2,\frac 1 {r(\omega)},
      P_1,\ldots,P_{r(\omega)}),\omega))}{-k_2^2\log\omega} <\beta.
   \end{aligned}
$$
(iii) Therefore, for each $0<\omega<\omega_0$ and $r(\omega)\in \Bbb
N$, there is some $K(r(\omega))\in \Bbb N$ satisfying
$$
   \begin{aligned}
       & {\log(\nu_{\infty}(\Gamma_R^{(top)}(x_1,\ldots,x_n;k_1,\frac 1 {r(\omega)},
      P_1,\ldots,P_{r(\omega)}),\omega))} <-\alpha{ k_1^2\log\omega}, \quad \forall \ k_1\ge K( r(\omega));\\
         &   {\log(\nu_{\infty}(\Gamma_R^{(top)}(y_1,\ldots,y_n;k_2,\frac 1 {r(\omega)},
      P_1,\ldots,P_{r(\omega)}),\omega))} <-\beta{k_2^2\log\omega}, \quad \forall \ k_2\ge
      K( r(\omega)).
   \end{aligned}
$$
\end{lemma}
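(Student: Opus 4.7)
The proof is essentially a matter of unpacking Voiculescu's definition of $\delta_{top}$ in three successive stages, corresponding to the three quantifiers $\limsup_{\omega\to 0^+}$, $\inf_{\epsilon>0, r\in\Bbb N}$, and $\limsup_{k\to\infty}$ that appear in Remark 2.1. Since the positive number $R$ is fixed from the outset to exceed all operator norms of the generators, Remark 2.1 legitimizes working with the formula
$$
\delta_{top}(x_1,\ldots,x_n)=\limsup_{\omega\to 0^+}\inf_{\epsilon>0, r\in\Bbb N}\limsup_{k\to\infty}\frac{\log\nu_\infty(\Gamma_R^{(top)}(x_1,\ldots,x_n;k,\epsilon,P_1,\ldots,P_r),\omega)}{-k^2\log\omega},
$$
and similarly for $(y_1,\ldots,y_n)$.

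The plan for (i) is to use the strict inequalities $\alpha>\delta_{top}(x_1,\ldots,x_n)$ and $\beta>\delta_{top}(y_1,\ldots,y_n)$ to find, via the definition of $\limsup_{\omega\to 0^+}$, positive numbers $\omega_1,\omega_2$ past which the $\inf$-$\limsup$ expressions stay below $\alpha$ and $\beta$ respectively; set $\omega_0=\min\{\omega_1,\omega_2,1/10\}$. To replace the double infimum $\inf_{\epsilon>0,r\in\Bbb N}$ by $\inf_{r\in\Bbb N}$ with the diagonal choice $\epsilon=1/r$, I would invoke the monotonicity of the norm-microstates space: enlarging $r$ imposes more polynomial constraints and shrinking $\epsilon$ tightens the existing ones, so $\Gamma_R^{(top)}(\,\cdot\,;k,1/r,P_1,\ldots,P_r)$ is nested decreasing in $r$, hence so are its covering numbers and the limsup in $k$. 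Given any $(\epsilon,r)$ one can pick $r'\ge r$ with $1/r'\le\epsilon$, so the diagonal infimum equals the full one; (i) follows.

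For (ii), once the diagonal infimum in $r$ is strictly less than $\alpha$ (resp.\ $\beta$), the definition of infimum furnishes positive integers $r_x(\omega)$ and $r_y(\omega)$ realizing the bounds individually; set $r(\omega):=\max\{r_x(\omega),r_y(\omega)\}$ and use the monotonicity argument again to conclude that this common choice works on both sides simultaneously. For (iii), the $\limsup$ in $k$ being strictly less than $\alpha$ (or $\beta$) means, by the very definition of $\limsup$, that only finitely many terms can meet or exceed $\alpha$; pick $K_1( r(\omega))$ and $K_2(r(\omega))$ past which the quotients are below $\alpha$ and $\beta$ respectively, let $K(r(\omega))$ be their maximum, and clear the denominator $-k_i^2\log\omega>0$ (which is positive since $\omega<\omega_0<1$) to obtain the stated inequalities.

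There is no real obstacle here; the only step requiring slight care is the reduction from $\inf_{\epsilon>0,r\in\Bbb N}$ to the diagonal $\inf_{r\in\Bbb N}$ at $\epsilon=1/r$, which must be justified by monotonicity of $\Gamma_R^{(top)}$ in both parameters together with the fact that $\{P_r\}_{r=1}^\infty$ exhausts all rational-coefficient noncommutative polynomials, so that any finite list and any $\epsilon>0$ can be dominated by a diagonal pair $(1/r,r)$.
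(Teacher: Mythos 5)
Your argument is correct and is the natural unpacking of Voiculescu's definition; the paper itself offers no proof at all, introducing the lemma only with the phrase ``By the definition of topological free entropy dimension, we have the following,'' so you are filling in exactly the gap the authors left to the reader. The three steps you isolate — peeling off $\limsup_{\omega\to 0^+}$ to get $\omega_0$, identifying the full $\inf_{\epsilon>0,\,r\in\Bbb N}$ with the diagonal $\inf_{r\in\Bbb N}$ at $\epsilon=1/r$ via monotonicity of $\Gamma_R^{(top)}$ in both parameters and then extracting $r(\omega)$ (with $r(\omega)$ taken as the maximum of the two choices so that a single index works for both families), and finally peeling off $\limsup_{k\to\infty}$ to get $K(r(\omega))$ and clearing the positive denominator $-k_i^2\log\omega$ — are exactly what the lemma asserts, in the same order and spirit the paper implicitly intends.
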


\begin{lemma}
Suppose that $\mathcal A$ and $\mathcal B$ are two unital C$^*$
algebras and $x_1\oplus y_1, \ldots, x_n\oplus y_n$ is a family of
self-adjoint elements that generate  $\mathcal A\bigoplus \mathcal
B$. Let $R>\max\{\|x_1\|, \ldots,\|x_n\|, \|y_1\|,\ldots,\|y_n\|\}$
be a positive number. For any $\omega>0$, $r_0\in \Bbb N$, there is
some $t>0$ so that the following holds: $\forall \ r>t$, $\forall \
k\ge 2$, if
$$
  (X_1,\ldots, X_n) \in \Gamma_R^{(top)}(x_1\oplus y_1,\ldots,x_n\oplus y_n;k,\frac 1 r,
      P_1,\ldots,P_r),
$$ then there are $k_1,k_2\in \Bbb N$,
$$
   \begin{aligned}
       (A_1,\ldots, A_n) &\in \Gamma_R^{(top)}(x_1  ,\ldots,x_n ;k_1,\frac 1 r_0,
      P_1,\ldots,P_{r_0}),\\
        (B_1,\ldots, B_n) &\in \Gamma_R^{(top)}(y_1  ,\ldots,y_n ;k_2,\frac 1 r_0,
      P_1,\ldots,P_{r_0})
   \end{aligned}
$$ and $U\in \mathcal U(k)$ so that (i) $k_1+k_2=k$; and (ii)
$$
  \left \| (X_1,\ldots, X_n) -U^* ( \begin{pmatrix} A_1 &   0\\ 0&B_1\end{pmatrix},\ldots, \begin{pmatrix} A_n &   0\\ 0&B_n\end{pmatrix}    )   U        \right
  \|\le \omega.
$$

\end{lemma}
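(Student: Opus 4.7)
The plan is to exploit the central projection $p=1_{\mathcal A}\oplus 0$ of $\mathcal A\oplus\mathcal B$. Because $x_1\oplus y_1,\ldots, x_n\oplus y_n$ generate $\mathcal A\oplus\mathcal B$ as a C$^*$-algebra, $p$ lies in the norm-closure of the noncommutative polynomials in these generators. Given $\omega>0$ and $r_0\in\Bbb N$, I would first fix a small $\eta>0$ (to be calibrated at the end) and a self-adjoint polynomial $Q$ with rational complex coefficients such that
$$
\|Q(x_1\oplus y_1,\ldots,x_n\oplus y_n)-p\|<\eta.
$$
Since $p^2=p$, $p$ is central, and $pzp$ equals the $\mathcal A$-component of $z$ (embedded as $\cdot\oplus 0$) for every $z\in\mathcal A\oplus\mathcal B$, the following identities hold at the generators up to norm $O(\eta)$: (a) $Q^2-Q\approx 0$; (b) $QX_i-X_iQ\approx 0$ for each $i$; (c) $\|QP_jQ\|\approx\|P_j(x_1,\ldots,x_n)\|$ and (d) $\|(1-Q)P_j(1-Q)\|\approx\|P_j(y_1,\ldots,y_n)\|$ for $1\le j\le r_0$. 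The key point is that (a)--(d) are a finite list of polynomial norm conditions.

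The next step is to choose $t=t(\omega,r_0)$ large enough that $\{P_1,\ldots,P_t\}$ already contains all polynomials featured in (a)--(d) together with $P_1,\ldots,P_{r_0}$, and such that $1/t\ll\eta$. For any $r>t$ and any $(X_1,\ldots,X_n)\in\Gamma_R^{(top)}(x\oplus y;k,1/r,P_1,\ldots,P_r)$, the defining inequalities of the microstate space transfer (a)--(d) to $X$; in particular $\|Q(X)^2-Q(X)\|$ and $\|[Q(X),X_i]\|$ are $O(\eta)$. Applying continuous functional calculus with a Lipschitz function equal to $0$ on a neighbourhood of $0$ and $1$ on a neighbourhood of $1$, I extract a true orthogonal projection $P\in\mathcal M_k(\Bbb C)$ with $\|P-Q(X)\|=O(\sqrt\eta)$, and therefore $\|[P,X_i]\|=O(\sqrt\eta)$. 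Put $k_1=\operatorname{rank}P$, $k_2=k-k_1$, and pick $U\in\mathcal U(k)$ with $U^*PU=E:=\operatorname{diag}(I_{k_1},0_{k_2})$. The off-diagonal corners of $U^*X_iU$ have norm at most $\|[E,U^*X_iU]\|=\|[P,X_i]\|=O(\sqrt\eta)$, so if $A_i,B_i$ denote the two diagonal blocks of $U^*X_iU$, then
$$
\bigl\|X_i-U\,\operatorname{diag}(A_i,B_i)\,U^*\bigr\|=O(\sqrt\eta),
$$
which is conclusion (ii) once $\eta$ is chosen with $O(\sqrt\eta)<\omega$.

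For conclusion (i), block-diagonality gives $E\,P_j(\operatorname{diag}(A_i,B_i))\,E=\operatorname{diag}(P_j(A_i),0)$, whose operator norm is $\|P_j(A_i)\|$. Combining $\|P-Q(X)\|=O(\sqrt\eta)$ with the Lipschitz stability of $P_j$ on the ball of radius $R$, one has $E\,U^*P_j(X)U\,E\approx\operatorname{diag}(P_j(A_i),0)$ up to $O(\sqrt\eta)$, and the microstate condition applied to the polynomial $QP_jQ$ combined with (c) yields $\|Q(X)P_j(X)Q(X)\|\approx\|P_j(x_1,\ldots,x_n)\|$. Chaining these gives
$$
\bigl|\,\|P_j(A_i)\|-\|P_j(x_1,\ldots,x_n)\|\,\bigr|=O(\sqrt\eta),
$$
and the symmetric argument using $1-Q$ and (d) gives the corresponding estimate for $B_i$. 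Fixing $\eta$ so that $O(\sqrt\eta)<1/r_0$ places $(A_i)\in\Gamma_R^{(top)}(x;k_1,1/r_0,P_1,\ldots,P_{r_0})$ and $(B_i)\in\Gamma_R^{(top)}(y;k_2,1/r_0,P_1,\ldots,P_{r_0})$. The essential content is Step~1: a single central projection, polynomially approximable in norm, encodes the direct-sum decomposition and is detectable at the microstate level. The main technical obstacle is bookkeeping — tracking how the three small parameters $\eta$, $\sqrt\eta$, and $1/r$ propagate through the spectral perturbation and polynomial-norm estimates to a single threshold $t(\omega,r_0)$.
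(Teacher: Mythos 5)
Your proof is correct in its essentials but takes a genuinely different route from the paper's. The paper argues by contradiction: it assumes a sequence of microstates $X^{(m)} \in \Gamma_R^{(top)}(x\oplus y; k_m, 1/r_m, \ldots)$ violating the conclusion, passes to the C$^*$-algebra ultraproduct $\prod_{m=1}^{\gamma} \mathcal{M}_{k_m}(\mathbb{C})$, observes that $x_i\oplus y_i \mapsto [(X_i^{(m)})_m]$ extends to a $*$-isomorphism, invokes the lifting theorem to write $\psi(1_{\mathcal A}\oplus 0)$ as $[(P_m)_m]$ for genuine projections $P_m$, and reads off the contradiction from the limit. Your argument is direct and quantifier-explicit: you approximate the central projection $p = 1_{\mathcal A}\oplus 0$ in norm by a polynomial $Q$ with rational complex coefficients, enlarge the threshold $t$ so that the finitely many derived polynomials $Q^2-Q$, $[Q, X_i]$, $QP_jQ$, $(1-Q)P_j(1-Q)$ for $j\le r_0$ appear among $P_1,\dots,P_t$, transfer the resulting approximate-idempotency and approximate-centrality to $Q(X)$ via the microstate norm bounds, and recover a true projection by continuous functional calculus. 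Both proofs rest on the same structural fact — $p$ is a norm-limit of polynomials in the generators — but you internalize the compactness into explicit $\eta$-bookkeeping rather than delegating it to the ultrafilter; the paper's version is tidier, yours is constructive and avoids the projection-lifting lemma. Two small points your sketch leaves implicit but which close cleanly: (i) you need $k_1,k_2 \ge 1$, which follows because the microstate condition on $Q$ and $1-Q$ forces $\|Q(X)\| > \tfrac12$ and $\|1-Q(X)\| > \tfrac12$ for small $\eta$, hence $0 < \operatorname{rank} P < k$; and (ii) the Lipschitz constants for $P_1,\dots,P_{r_0}$ on the $R$-ball depend on $j$ but only finitely many $j$ occur, so a uniform constant exists and the calibration $O(\sqrt\eta) < 1/r_0$ is legitimate.
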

\begin{proof}
We will prove the result by using the contradiction. Assume, to the
contrary, the result of the lemma does not hold, i.e. there are some
$\omega_0>0$, $r_0\ge 1$, two strictly increasing sequences
$\{r_m\}_{m=1}^\infty$ and $\{k_m\}_{m=1}^\infty,$ and
$$
(X_1^{(m)},\ldots, X_n^{(m)}) \in \Gamma_R^{(top)}(x_1\oplus
y_1,\ldots,x_n\oplus y_n;k_m,\frac 1 {r_m},
      P_1,\ldots,P_{r_m})
$$ satisfying
\begin{equation}
  \left \| (X_1^{(m)},\ldots, X_n^{(m)}) -U^* (  \begin{pmatrix} A_1^{(m)} &   0\\ 0&B_1^{(m)}\end{pmatrix},\ldots, \begin{pmatrix} A_n^{(m)} &   0\\ 0&B_n^{(m)}\end{pmatrix}     )   U        \right
  \|>  \omega.
\end{equation} for all
$$
   \begin{aligned}
       (A_1^{(m)},\ldots, A_n^{(m)}) &\in \Gamma_R^{(top)}(x_1  ,\ldots,x_n ;s_{_{1,m}},\frac 1 {r_0},
      P_1,\ldots,P_{r_0}),\\
        (B_1^{(m)},\ldots, B_n^{(m)}) &\in \Gamma_R^{(top)}(y_1  ,\ldots,y_n ;s_{_{2,m}},\frac 1 {r_0},
      P_1,\ldots,P_{r_0})
   \end{aligned}
$$ and all $U\in \mathcal U(k)$ where  $s_{_{1,m}}+s_{_{2,m}}=k_m$.

Let $\gamma$ be a free ultra-filter in $\beta(\Bbb N)\setminus \Bbb
N$. Let $\prod_{m=1}^\gamma \mathcal M_{k_m}(\Bbb C)$ be the C$^*$
algebra ultra-product of
 $(\mathcal M_{k_m}(\Bbb C))_{m=1}^\infty$ along the
ultra-filter $\gamma$, i.e.  $\prod_{m=1}^\gamma \mathcal
M_{k_m}(\Bbb C)$ is the quotient algebra of the C$^*$ algebra
$\prod_m \mathcal M_{k_m}(\Bbb C)$ by $\mathcal I_\infty $, the
$0$-ideal of the norm $\|\cdot \|_\gamma$, where
$\|(Y_m)_{m=1}^\infty\|_\gamma=\lim_{m\rightarrow \gamma} \|Y_m\| $
for each $(Y_m)_{m=1}^\infty $ in $\prod_m \mathcal M_{k_m}(\Bbb
C)$.

By mapping  $x_i\oplus y_i$ to $[(X_i^{(m)})_{m=1}^\infty]$ in
$\prod_{m=1}^\gamma \mathcal M_{k_m}(\Bbb C)$  for each $1\le i\le
n$, we obtain a unital $*$-isomorphism $\psi$ from the C$^*$ algebra
$\mathcal A\bigoplus\mathcal B$ onto the C$^*$ subalgebra generated
by $[(X_1^{(m)})_{m=1}^\infty]$, \ldots,
$[(X_n^{(m)})_{m=1}^\infty]$ in  $\prod_{m=1}^\gamma \mathcal
M_{k_m}(\Bbb C)$ . Thus $\psi(I_{\mathcal A}\oplus 0)$ and
$\psi(0\oplus I_{\mathcal B})$ are two projections in
$\prod_{m=1}^\gamma \mathcal M_{k_m}(\Bbb C)$ satisfying
$$\psi(I_{\mathcal A}\oplus 0)+\psi(0\oplus I_{\mathcal
B})=I_{\prod_{m=1}^\gamma \mathcal M_{k_m}(\Bbb C)}.$$

Without loss of generality, we can assume that there is a sequence
of projections $\{P_m\}_{m=1}^\infty$  with $P_m\in \mathcal
M_{k_m}(\Bbb C)$ such that
$$
[(P_m)_{m=1}^\infty] = \psi(I_{\mathcal A}\oplus 0)\qquad \text {
and } \qquad [(I_{k_m}-P_m)_{m=1}^\infty]= \psi(0\oplus I_{\mathcal
B}),
$$ where $I_{k_m}$ is the identity matrix of $\mathcal M_{k_m}(\Bbb
C)$. For each $P_m$ in $\mathcal M_{k_m}(\Bbb C)$, there are
positive integers $s_{_{1,m}}, s_{_{2,m}}$, with
$s_{_{1,m}}+s_{_{2,m}}=k_m$, and a unitary matrix $U_m$ in $\mathcal
U(k_m)$ so that
$$
 P_m= U_m^* \begin{pmatrix} I_{s_{_{1,m}}} &   0\\ 0&0\end{pmatrix} U_m \qquad \text { and } \qquad I_{k_m}-P_m=
 U_m^*\begin{pmatrix} 0 &   0\\ 0&I_{s_{_{2,m}}}\end{pmatrix}U_m,
$$ where $I_{s_{_{1,m}}}$ are $I_{s_{_{2,m}}}$   the identity matrices of $\mathcal M_{s_{_{1,m}}}(\Bbb
C)$, or $\mathcal M_{s_{_{2,m}}}(\Bbb C)$ respectively.

Note
$$
  x_i\oplus 0 = (I_{\mathcal A}\oplus 0) ( x_i\oplus y_i) (I_{\mathcal A}\oplus
  0)  \in \mathcal A\bigoplus 0.
$$
Thus
$$\begin{aligned}
\psi(x_i\oplus 0) &= [(P_m)_{m=1}^\infty] [(X_i^{(m)})_{m=1}^\infty]
[(P_m)_{m=1}^\infty]\\ &=  [(P_m X_i^{(m)} P_m)_{m=1}^\infty]\\
&=[(U_m^* \begin{pmatrix} I_{s_{_{1,m}}} &   0\\ 0&0\end{pmatrix} U_m X_i^{(m)}
U_m^* \begin{pmatrix} I_{s_{_{1,m}}} &   0\\ 0&0\end{pmatrix} U_m)_{m=1}^\infty].\end{aligned}
$$
Similarly,
$$\begin{aligned}
\psi(0\oplus y_i) &= [(I_{k_m}-P_m)_{m=1}^\infty]
[(X_i^{(m)})_{m=1}^\infty]
[(I_{k_m}-P_m)_{m=1}^\infty]\\ &=  [((I_{k_m}-P_m) X_i^{(m)} (I_{k_m}-P_m))_{m=1}^\infty]\\
&=[(U_m^*\begin{pmatrix} 0 &   0\\ 0&I_{s_{_{2,m}}} \end{pmatrix} U_m X_i^{(m)} U_m^*\begin{pmatrix} 0 &   0\\ 0&I_{s_{_{2,m}}} \end{pmatrix} U_m)_{m=1}^\infty].\end{aligned}
$$
Let $$ \begin{pmatrix} A_i^{(m)} &   0\\ 0&0\end{pmatrix}=\begin{pmatrix} I_{s_{_{1,m}}} &   0\\ 0&0\end{pmatrix} U_m X_i^{(m)}
U_m^*\begin{pmatrix} I_{s_{_{1,m}}} &   0\\ 0&0\end{pmatrix}, \ \text { for } i=1,\ldots,n $$ and $$ \begin{pmatrix} 0 &   0\\ 0& B_i^{(m)}\end{pmatrix}
= \begin{pmatrix} 0 &   0\\ 0&I_{s_{_{2,m}}} \end{pmatrix}   U_m X_i^{(m)} U_m^*\begin{pmatrix} 0 &   0\\ 0&I_{s_{_{2,m}}} \end{pmatrix}, \ \text { for } i=1,\ldots,n
$$ where   $A_1^{(m)},\ldots, A_n^{(m)}$ are in $\mathcal M_{s_{1,m}}(\Bbb C)$ and $B_1^{(m)},\ldots,
B_n^{(m)}$  are in $\mathcal M_{s_{2,m}}(\Bbb C)$.
Then,
$$
\begin{aligned}
 &[(\begin{pmatrix} A_i^{(m)} &   0\\ 0&0\end{pmatrix})_{m=1}^\infty] &=[(U_m)_{m=1}^\infty]\psi(x_i\oplus
0)[(U_m^* )_{m=1}^\infty] \ \text { for } i=1,\ldots,n \\
&[(\begin{pmatrix} 0 &   0\\ 0& B_i^{(m)}\end{pmatrix})_{m=1}^\infty] &=[(U_m)_{m=1}^\infty]\psi(0\oplus
y_i)[(U_m^* )_{m=1}^\infty] \ \text { for } i=1,\ldots,n .\end{aligned}
$$
 Therefore, when $m$ is large enough, we have
$$
 \begin{aligned}
       (A_1^{(m)},\ldots, A_n^{(m)}) &\in \Gamma_R^{(top)}(x_1  ,\ldots,x_n ;s_{_{1,m}},\frac 1 {r_0},
      P_1,\ldots,P_{r_0}),\\
        (B_1^{(m)},\ldots, B_n^{(m)}) &\in \Gamma_R^{(top)}(y_1  ,\ldots,y_n ;s_{_{2,m}},\frac 1 {r_0},
      P_1,\ldots,P_{r_0}),
   \end{aligned}
$$
On the other hand,
$$
\begin{aligned} ([(X_1^{(m)})_{m=1}^\infty], &\ldots ,
[(X_1^{(m)})_{m=1}^\infty]) =(\psi(x_1\oplus y_1), \ldots,
\psi(x_n\oplus
y_n ))\\
& = (\psi(x_1\oplus 0)+ \psi(0\oplus y_1), \ldots, \psi(x_n\oplus
0)+ \psi(0\oplus y_n))\\
&=[(U_m^* )_{m=1}^\infty] \left
([(\begin{pmatrix} A_1^{(m)} &   0\\ 0&0\end{pmatrix})_{m=1}^\infty]+[(\begin{pmatrix} 0 &   0\\ 0& B_1^{(m)}\end{pmatrix})_{m=1}^\infty],
\ldots,\right .\\ & \qquad \qquad \qquad \qquad\left .[(\begin{pmatrix} A_n^{(m)} &   0\\ 0&0\end{pmatrix})_{m=1}^\infty]+[(\begin{pmatrix} 0 &   0\\ 0& B_n^{(m)}\end{pmatrix})_{m=1}^\infty] \right
)
[(U_m)_{m=1}^\infty]\\
&=[(U_m^* )_{m=1}^\infty] \left ([(\begin{pmatrix} A_1^{(m)} &   0\\ 0&B_1^{(m)}\end{pmatrix})_{m=1}^\infty], \ldots,[(\begin{pmatrix} A_n^{(m)} &   0\\ 0&B_n^{(m)}\end{pmatrix})_{m=1}^\infty] \right )
[(U_m)_{m=1}^\infty]\\
\end{aligned}
$$ which is against the inequality (4.1.1). This completes the proof.

\end{proof}

\begin{lemma}
Let $\alpha, \beta>0$ and
$$
    f(s) = \alpha s^2+ \beta (1-s)^2+ 1- s^2-(1-s)^2,\qquad \text{
    for } 0\le s\le 1.
$$ Then
$$
\max_{0\le s\le 1} f(s) = \left \{ \begin{aligned}  &
\frac{\alpha\beta-1}{\alpha+\beta-2}  &\qquad \text { if } \alpha<1,
\beta<1 \\& \max \{\alpha, \beta\} &\qquad \text { otherwise. }
\\\end{aligned} \right.
$$
\end{lemma}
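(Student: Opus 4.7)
The plan is to treat $f$ as a quadratic polynomial in $s$ and carry out a straightforward one-variable calculus argument, with cases organized by the sign of the leading coefficient $\alpha+\beta-2$.

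First, using the elementary identity $1 - s^{2} - (1-s)^{2} = 2s(1-s)$, I would rewrite
$$f(s) = \alpha s^{2} + \beta(1-s)^{2} + 2s(1-s).$$
Immediately $f(0) = \beta$, $f(1) = \alpha$, and differentiation gives $f'(s) = 2(\alpha+\beta-2)s + 2(1-\beta)$ together with $f''(s) = 2(\alpha+\beta-2)$. When $\alpha+\beta\neq 2$ the unique critical point is
$$s^{\ast} = \frac{\beta-1}{\alpha+\beta-2}, \qquad 1 - s^{\ast} = \frac{\alpha-1}{\alpha+\beta-2}.$$
A routine expansion (the one bit of real algebra in the proof) shows that
$$\alpha(\beta-1)^{2} + \beta(\alpha-1)^{2} + 2(\alpha-1)(\beta-1) = (\alpha\beta-1)(\alpha+\beta-2),$$
from which $f(s^{\ast}) = (\alpha\beta - 1)/(\alpha + \beta - 2)$.

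Next I would break into cases. If $\alpha < 1$ and $\beta < 1$, then $\alpha+\beta-2 < 0$, so $f$ is strictly concave; and one checks $s^{\ast}\in(0,1)$ because both numerator and denominator are negative, while $s^{\ast}<1$ is equivalent (after flipping sign) to $\alpha<1$. Hence $s^{\ast}$ is the global maximum on $[0,1]$ and $\max f = (\alpha\beta-1)/(\alpha+\beta-2)$. Otherwise, using the symmetry $f(s;\alpha,\beta) = f(1-s;\beta,\alpha)$, I may assume $\beta\geq\alpha$ and $\beta\geq 1$, so that $\max\{\alpha,\beta\}=\beta$. If $\alpha+\beta\geq 2$, then $f$ is convex, so its maximum on $[0,1]$ is attained at an endpoint, and $\max\{f(0),f(1)\}=\beta$. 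If instead $\alpha+\beta<2$ (so $\alpha<1\leq\beta$), then $s^{\ast}\leq 0$ lies outside $(0,1)$ while $f$ is concave, hence $f$ is non-increasing on $[0,1]$ and $\max f = f(0)=\beta$. The degenerate line $\alpha+\beta=2$ is disposed of directly from the constant sign of the linear $f'(s)=2(1-\beta)$, and the single point $\alpha=\beta=1$ yields $f\equiv 1$, consistent with the formula.

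There is no deep obstacle here — the whole statement is elementary single-variable optimisation — but the one step that warrants care is the factorisation $\alpha(\beta-1)^{2}+\beta(\alpha-1)^{2}+2(\alpha-1)(\beta-1)=(\alpha\beta-1)(\alpha+\beta-2)$; without it the critical value would not visibly match the expression $\frac{\alpha\beta-1}{\alpha+\beta-2}$ that appears in the conclusion, and the first case of Theorem 4.2(2)(i) would look unmotivated.
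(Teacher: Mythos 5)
Your proof is correct and takes essentially the same route as the paper: both treat $f$ as a quadratic in $s$, locate the vertex $s_0=\frac{\beta-1}{\alpha+\beta-2}$, verify $f(s_0)=\frac{\alpha\beta-1}{\alpha+\beta-2}$, and split into cases according to the sign of $\alpha+\beta-2$. The only cosmetic differences are that the paper compares $f(s_0)$ to $f(0),f(1)$ via the rewriting $\frac{\alpha\beta-1}{\alpha+\beta-2}=\alpha-\frac{(\alpha-1)^2}{\alpha+\beta-2}$, whereas you use the factorisation identity $\alpha(\beta-1)^2+\beta(\alpha-1)^2+2(\alpha-1)(\beta-1)=(\alpha\beta-1)(\alpha+\beta-2)$ together with concavity and the symmetry $f(s;\alpha,\beta)=f(1-s;\beta,\alpha)$ to cut down the casework — both perfectly sound.
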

\begin{proof}Note that
$$
   f(s) =  (\alpha+\beta-2) s^2-  {2(\beta-1)}
   s +\beta.
$$ Thus, if $\alpha+\beta\ne 2$, then $f$ has an extreme point at
$$
 s_0=\frac {\beta-1}{\alpha+\beta-2},
$$ with
$$
f(s_0)=\frac{\alpha\beta-1}{\alpha+\beta-2}.
$$

Case one: If $\alpha+\beta> 2$, we know
$$
\frac{\alpha\beta-1}{\alpha+\beta-2}=
\alpha-\frac{\alpha^2-2\alpha+1}{\alpha+\beta-2}\le \alpha=f(1);
\quad \text{ similarly }\quad
\frac{\alpha\beta-1}{\alpha+\beta-2}\le \beta=f(0).
$$
Thus
$$
\max_{0\le s\le 1} f(s) =\max \{\alpha,\beta\} \quad \text { if }
\quad \alpha+\beta> 2.$$

Case two: If $\alpha+\beta-2< 0$ and $f$ achieves its absolute
maximum in the interval $(0,1)$, then $0<s_0<1$. This is equivalent
to
$$
\alpha<1\qquad \text { and } \qquad \beta<1.
$$
Thus
$$
f(s_0)=\frac{\alpha\beta-1}{\alpha+\beta-2}=
\alpha-\frac{\alpha^2-2\alpha+1}{\alpha+\beta-2}\ge \alpha=f(1),
\quad \text{ and }\quad
f(s_0)=\frac{\alpha\beta-1}{\alpha+\beta-2}\ge \beta=f(0).
$$ It follows that
$$
\max_{0\le s\le 1} f(s) = \left \{ \begin{aligned}  &
\frac{\alpha\beta-1}{\alpha+\beta-2}  & \text { if } \alpha<1,
\beta<1\qquad \quad \\& \max \{\alpha, \beta\} &\qquad \text { if }
\alpha+\beta<2, \  \alpha\ge 1 \ \text { or } \alpha+\beta<2, \
\beta\ge 1.
\\ \end{aligned} \right.$$

Case three: If $\alpha+\beta-2= 0$, it is easy to check that
$$
\max_{0\le s\le 1} f(s) =\max\{\alpha, \beta\}.$$

As a summary, we obtain
$$
\max_{0\le s\le 1} f(s) = \left \{ \begin{aligned}  &
\frac{\alpha\beta-1}{\alpha+\beta-2} &\qquad \text { if } \alpha<1,
\beta<1\\ & \max \{\alpha, \beta\} &\qquad \text { otherwise. }
\\\end{aligned} \right.
$$
\end{proof}

\begin{proposition}
Suppose that $\mathcal A$ and $\mathcal B$ are two unital C$^*$
algebras and $x_1\oplus y_1, \ldots, x_n\oplus y_n$ is a family of
self-adjoint elements that generate  $\mathcal A\bigoplus \mathcal
B$. If
$$
\alpha>\delta_{top}(x_1,\ldots,x_n) \qquad \text { and } \qquad
\beta>\delta_{top}(y_1,\ldots,y_n),
$$
then
$$
\delta_{top}(x_1\oplus y_1, \ldots, x_n\oplus y_n)\le  \left \{
\begin{aligned}  & \frac{\alpha\beta-1}{\alpha+\beta-2}  &\qquad
\text { if } \alpha<1, \beta<1 \\& \max \{\alpha, \beta\} &\qquad
\text { otherwise. }
\\\end{aligned} \right.
$$
\end{proposition}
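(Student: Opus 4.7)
The plan is to invoke Lemma 4.2 to reduce an arbitrary microstate of $(x_1\oplus y_1,\ldots,x_n\oplus y_n)$, up to operator-norm $\omega$, to a unitary conjugate of a tuple $(U^*\,\mathrm{diag}(A_i,B_i)\,U)_{i=1}^n$ whose blocks form microstates of $(x_1,\ldots,x_n)$ in $\mathcal M_{k_1}(\Bbb C)$ and of $(y_1,\ldots,y_n)$ in $\mathcal M_{k_2}(\Bbb C)$, with $k_1+k_2=k$. I would then bound the covering number of the set of all such conjugates by a product of three covering numbers: one for the $x$-block, one for the $y$-block, and one for the conjugating unitary modulo its block-diagonal stabilizer.

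Concretely, fix $\omega>0$ small and apply Lemma 4.1 with the given $\alpha,\beta$ to obtain $r_0=r_0(\omega)$ and $K_0=K(r_0(\omega))$ such that for all $k_1,k_2\ge K_0$,
\[
\nu_\infty\bigl(\Gamma_R^{(top)}(x_1,\ldots,x_n;k_1,1/r_0,P_1,\ldots,P_{r_0}),\omega\bigr)\le\omega^{-\alpha k_1^2},
\]
and similarly with exponent $-\beta k_2^2$ for the $y$-family. Then by Lemma 4.2 I pick $r=r(\omega)$ large enough that every $(X_1,\ldots,X_n)\in\Gamma_R^{(top)}(x_1\oplus y_1,\ldots,x_n\oplus y_n;k,1/r,P_1,\ldots,P_r)$ is within operator-norm $\omega$ of some $(U^*\mathrm{diag}(A_i,B_i)U)_{i=1}^n$, for some splitting $k=k_1+k_2$, some $U\in\mathcal U(k)$, and some $(A_i),(B_i)$ as above. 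For each splitting with $k_1,k_2\ge K_0$, I count block-diagonal conjugates up to operator-norm $C\omega$ as a product: at most $\omega^{-\alpha k_1^2}$ choices for $(A_i)$, at most $\omega^{-\beta k_2^2}$ choices for $(B_i)$, and choices for $U$ modulo its stabilizer $\mathcal U(k_1)\times\mathcal U(k_2)$. The resulting coset space is the complex Grassmannian $G(k_1,k)$, of real dimension $2k_1k_2$, whose operator-norm $\omega$-covering number is bounded by $(C'/\omega)^{2k_1k_2}$ via a standard volumetric estimate. The perturbation inequality $\|U^*DU-{U'}^*D'U'\|\le 2R\|U-U'\|+\|D-D'\|$ guarantees that an $\omega$-perturbation in each factor produces an $O(\omega)$-perturbation of the output, so the product count genuinely yields an operator-norm covering.

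Summing over the at most $k+1$ splittings $k_1+k_2=k$ (handling the terms in which one of $k_1,k_2$ is below $K_0$ by a crude brute-force bound $(2R/\omega)^{n k_i^2}$, whose contribution vanishes after division by $k^2$), and writing $s=k_1/k$, I obtain
\[
\frac{\log \nu_\infty\bigl(\Gamma_R^{(top)}(x_1\oplus y_1,\ldots,x_n\oplus y_n;k,1/r,P_1,\ldots,P_r),C''\omega\bigr)}{-k^2\log\omega}\le \max_{0\le s\le 1}f(s)+o(1)
\]
as $k\to\infty$, where $f(s)=\alpha s^2+\beta(1-s)^2+1-s^2-(1-s)^2$. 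Taking $\limsup_{\omega\to 0^+}$ and invoking Lemma 4.3 delivers the claimed dichotomous upper bound on $\delta_{top}(x_1\oplus y_1,\ldots,x_n\oplus y_n)$. The main obstacle is the Grassmannian covering estimate together with verifying the operator-norm perturbation bound for coset representatives; the rest is careful bookkeeping of the constants, the order of the $\limsup$'s, and the uniformity required to absorb the $(k+1)$-many splittings and the small-$k_i$ boundary terms into the $o(1)$.
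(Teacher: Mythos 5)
Your proposal matches the paper's proof almost step for step: Lemma 4.1 for the microstate covering bounds on each summand, Lemma 4.2 for the block-diagonal decomposition up to a conjugating unitary, counting $U$ modulo the stabilizer $\mathcal U(k_1)\times\mathcal U(k_2)$ (the paper writes the exponent as $k^2-k_1^2-k_2^2$, which equals your $2k_1k_2$), summing over splittings with the boundary $k_i<K_0$ terms absorbed separately, and finishing with Lemma 4.3. The paper's inequality (4.1.4) is precisely the product count you describe, so your approach is the same; the only cosmetic difference is that the paper bounds the small-$k_i$ contributions by constants $M_\omega,N_\omega$ from Lemma 4.1 rather than by a brute-force volume bound, but the effect is identical after dividing by $k^2$.
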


\begin{proof}Let $R>\max\{\|x_1\oplus y_1\|,\ldots, \|x_n\oplus
y_n\|\}$ be a positive number. By Lemma 4.1, there is some
$\omega_0>0$ so that the following hold: for any
$0<\omega<\omega_0$, there are $r(\omega)\in \Bbb N$ and
$K(r(\omega))\in \Bbb N$ satisfying
   \begin{align}
       & { \nu_{\infty}(\Gamma_R^{(top)}(x_1,\ldots,x_n;k_1,\frac 1 {r(\omega)},
      P_1,\ldots,P_{r(\omega)}),\omega) } <\left (  \frac 1 \omega\right)^{\alpha{ k_1^2}}, \quad \forall \ k_1\ge K( r(\omega));\\
         &   { \nu_{\infty}(\Gamma_R^{(top)}(y_1,\ldots,y_n;k_2,\frac 1 {r(\omega)},
      P_1,\ldots,P_{r(\omega)}),\omega)} <\left (  \frac 1 \omega\right)^{\beta{k_2^2}}, \quad \forall \ k_2\ge
      K( r(\omega)).
   \end{align}
On the other hand, for each $0<\omega<\omega_0$ and $r(\omega)\in
\Bbb N$, it follows from Lemma 4.2 that there is some $t\in \Bbb N$
so that $\forall \ r>t$, $\forall \ k\ge 1$, if
$$
  (X_1,\ldots, X_n) \in \Gamma_R^{(top)}(x_1\oplus y_1,\ldots,x_n\oplus y_n;k,\frac 1 r,
      P_1,\ldots,P_r),
$$ then there are
$$
   \begin{aligned}
       (A_1,\ldots, A_n) &\in \Gamma_R^{(top)}(x_1  ,\ldots,x_n ;k_1,\frac 1 {r_\omega},
      P_1,\ldots,P_{r_\omega}),\\
        (B_1,\ldots, B_n) &\in \Gamma_R^{(top)}(y_1  ,\ldots,y_n ;k_2,\frac 1 {r_\omega},
      P_1,\ldots,P_{r_\omega})
   \end{aligned}
$$ and $U\in \mathcal U(k)$ so that (i) $k_1+k_2=k$; and (ii)
$$
  \left \| (X_1,\ldots, X_n) -U^* ( \begin{pmatrix} A_1 &   0\\ 0&B_1\end{pmatrix},\ldots, \begin{pmatrix} A_n &   0\\ 0&B_n\end{pmatrix}    )     U        \right
  \|< \omega.
$$
Moreover, we can further assume that $U\in \mathcal U(k)/ (\mathcal
U(k_1)\bigoplus \mathcal U(k_2))$.

Now it is a standard argument to show that for $r>t$,
 \begin{align}
\nu_\infty(\Gamma_R^{(top)}&(x_1\oplus y_1,\ldots,x_n\oplus
y_n;k,\frac 1 r,
      P_1,\ldots,P_r), 3\omega)\notag\\
   & \le \sum_{k_1+k_2=k}\left (\left (   \frac
   {C_2}{\omega}\right)^{k^2-k_1^2-k_2^2}\cdot \
   \nu_{\infty}(\Gamma_R^{(top)}(x_1  ,\ldots,x_n ;k_1,\frac 1 {r_\omega},
      P_1,\ldots,P_{r_\omega}),\omega)\right .\notag\\
      &  \left .\qquad \qquad \qquad \qquad \cdot \ \nu_\infty( \Gamma_R^{(top)}(y_1  ,\ldots,y_n ;k_2,\frac 1 {r_\omega},
      P_1,\ldots,P_{r_\omega}),\omega)\right ),
      \end{align}  where $C_2$ is some constant independent of $k,\omega$.
But  \begin{align}
    (4.1.4) & = \left ( \sum_{k_1=1}^{K(r(\omega))}+\sum_{k_1=K(r(\omega))+1}^{k-K(r(\omega))-1}+\sum_{k_1=k-K(r(\omega))}^k\right)\left (\left (   \frac
   {C_2}{\omega}\right)^{k^2-k_1^2-(k-k_1)^2}\right .\notag\\ &\qquad \qquad \qquad \qquad \cdot \
   \nu_{\infty}(\Gamma_R^{(top)}(x_1  ,\ldots,x_n ;k_1,\frac 1 {r_\omega},
      P_1,\ldots,P_{r_\omega}),\omega)\notag\\
      &  \left .\qquad \qquad \qquad \qquad \cdot \ \nu_\infty( \Gamma_R^{(top)}(y_1  ,\ldots,y_n ;k-k_1,\frac 1 {r_\omega},
      P_1,\ldots,P_{r_\omega}),\omega)\right ).
\end{align}
Let
$$
   \begin{aligned}
      M_\omega & = \max_{1\le k_1\le K(r(\omega))}\nu_{\infty}(\Gamma_R^{(top)}(x_1  ,\ldots,x_n ;k_1,\frac 1 {r_\omega},
      P_1,\ldots,P_{r_\omega}),\omega)\\
      N_\omega & = \max_{1\le k_2\le K(r(\omega))}\nu_\infty( \Gamma_R^{(top)}(y_1  ,\ldots,y_n ;k_2,\frac 1 {r_\omega},
      P_1,\ldots,P_{r_\omega}),\omega)
   \end{aligned}
$$
By (4.1.2) and (4.1.3), we get that if $k>2K(r(\omega))$ then
\begin{align}
\nu_\infty(\Gamma_R^{(top)}&(x_1\oplus y_1,\ldots,x_n\oplus
y_n;k,\frac 1 r,
      P_1,\ldots,P_r), 3\omega)\notag\\
    &  \le K(r(\omega)) M_\omega \left (\frac {C_2}{\omega}\right)^{k^2-(k-K(r(\omega)))^2}\left (  \frac 1
    \omega\right)^{\beta{k^2}}+K(r(\omega)) N_\omega \left (\frac {C_2}{\omega}\right)^{k^2-(k-K(r(\omega)))^2}\left (  \frac 1
    \omega\right)^{\alpha{k^2}}\notag\\
    &  \qquad \qquad + \ \sum_{k_1=K(r(\omega))+1}^{k-K(r(\omega))-1}
\left(\frac    {C_2}{\omega}\right)^{k^2-k_1^2-(k-k_1)^2} \left (
\frac 1 \omega\right)^{\alpha{ k_1^2}}\left (  \frac 1
\omega\right)^{\beta{(k-k_1)^2}}.
\end{align}
Let $$
    f(s) = \alpha s^2+ \beta (1-t)^2+ 1- s^2-(1-s)^2,\qquad \text{
    for } 0\le s\le 1.
$$ And
$$
L(\alpha,\beta) =\max_{0\le s\le 1} f(s).$$ Then
\begin{align}
(4.1.6) &\le \left[K(r(\omega))( M_\omega+ N_\omega)\left (\frac
{C_2}{\omega}\right)^{k^2-(k-K(r(\omega)))^2}+k C_2^{k^2}\right]\notag\\
&\qquad \qquad \qquad \qquad \cdot   \left\{ \left (  \frac 1
    \omega\right)^{\beta{k^2}} + \left (  \frac 1
    \omega\right)^{\alpha{k^2}}+ \left (  \frac 1
    \omega\right)^{L(\alpha,\beta)k^2 }\right \}.
\end{align}
Note that
$$
\lim_{k\rightarrow \infty}\frac {\log\left[K(r(\omega))( M_\omega+
N_\omega)\left (\frac
{C_2}{\omega}\right)^{k^2-(k-K(r(\omega)))^2}+k
C_2^{k^2}\right]}{k^2}=\log C_2;
$$ and
$$
L(\alpha,\beta)\ge \max\{\alpha,\beta\}.
$$ We obtain,
$$
  \begin{aligned}
    \limsup_{k\rightarrow\infty}
    \frac{\log(\nu_\infty(\Gamma_R^{(top)}(x_1\oplus y_1,\ldots,x_n\oplus
y_n;k,\frac 1 r,  P_1,\ldots,P_r),  3\omega))}{k^2}\le \log C_2+
{L(\alpha,\beta)}\log\left ( \frac 1
    \omega\right).
  \end{aligned}
$$
It induces that
$$
\begin{aligned}
   \delta_{top}&(x_1\oplus y_1,\ldots, x_n\oplus y_n) \\
   & = \limsup_{\omega\rightarrow 0^+} \inf_{r\in \Bbb N}
   \limsup_{k\rightarrow\infty} \frac{\log(\nu_\infty(\Gamma_R^{(top)}(x_1\oplus y_1,\ldots,x_n\oplus
y_n;k,\frac 1 r,  P_1,\ldots,P_r),  \omega))}{-k^2\log\omega}\\
&\quad \\
   &\le L(\alpha,\beta)=\left \{
\begin{aligned}  & \frac{\alpha\beta-1}{\alpha+\beta-2}  &\qquad
\text { if } \alpha<1, \beta<1 \\& \max \{\alpha, \beta\} &\qquad
\text { otherwise, }
\\\end{aligned} \right.
\end{aligned}
$$ where the last equation is from Lemma 4.3.
\end{proof}

\begin{proposition}
Suppose that $\mathcal A$ and $\mathcal B$ are two unital C$^*$
algebras and $x_1\oplus y_1, \ldots, x_n\oplus y_n$ is a family of
self-adjoint elements that generate $\mathcal A\bigoplus \mathcal
B$. If
$$
s=\delta_{top}(x_1,\ldots,x_n) \qquad \text { and } \qquad
t=\delta_{top}(y_1,\ldots,y_n),
$$
then
$$
\delta_{top}(x_1\oplus y_1, \ldots, x_n\oplus y_n)\le  \left \{
\begin{aligned}  & \frac{st-1}{s+t-2}  &\qquad
\text { if } s<1, \ t<1 ;\\& \max \{s,t\} &\qquad \text {
otherwise.}
\\\end{aligned} \right.
$$
\end{proposition}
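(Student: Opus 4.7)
The plan is to deduce Proposition 4.2 from Proposition 4.1 by a simple limit argument, taking $\alpha \downarrow s$ and $\beta \downarrow t$. Since Proposition 4.1 gives the desired bound for any $\alpha > \delta_{top}(x_1,\ldots,x_n)$ and any $\beta > \delta_{top}(y_1,\ldots,y_n)$, the only work left is to check continuity of the two expressions on the right-hand side on the relevant parameter regions, and to confirm that the correct case of Proposition 4.1 can be invoked as $(\alpha,\beta)$ approaches $(s,t)$.

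First, suppose $s < 1$ and $t < 1$. I would pick any sequences $\alpha_m \downarrow s$ and $\beta_m \downarrow t$ with $\alpha_m, \beta_m \in (0,1)$ for every $m$; this is possible since $s,t < 1$. Then by Proposition 4.1 applied to each pair $(\alpha_m, \beta_m)$, one obtains
$$
\delta_{top}(x_1\oplus y_1, \ldots, x_n\oplus y_n) \le \frac{\alpha_m \beta_m - 1}{\alpha_m + \beta_m - 2}
$$
for every $m$. Because $s + t < 2$, the denominator $\alpha + \beta - 2$ is bounded away from zero in a neighbourhood of $(s,t)$, so the rational function $(\alpha,\beta) \mapsto (\alpha\beta - 1)/(\alpha + \beta - 2)$ is continuous there. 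Passing to the limit as $m \to \infty$ yields the bound $(st-1)/(s+t-2)$.

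Next, suppose $s \ge 1$ or $t \ge 1$. In this case, for any $\alpha > s$ and $\beta > t$ the hypothesis of the first branch in Proposition 4.1 fails (since we still have $\alpha \ge 1$ or $\beta \ge 1$), so Proposition 4.1 gives
$$
\delta_{top}(x_1\oplus y_1, \ldots, x_n\oplus y_n) \le \max\{\alpha, \beta\}.
$$
I would again pick sequences $\alpha_m \downarrow s$ and $\beta_m \downarrow t$; by continuity of $\max$, the right-hand side converges to $\max\{s,t\}$, and the inequality passes to the limit.

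There is essentially no obstacle in this argument: the only point requiring a moment's thought is verifying that the case $s < 1, t < 1$ of Proposition 4.2 is matched with the case $\alpha < 1, \beta < 1$ of Proposition 4.1 (which I have arranged by choosing $\alpha_m, \beta_m$ in $(0,1)$), and that the denominator $\alpha + \beta - 2$ stays nonzero near $(s,t)$ (which holds since $s + t < 2$). Combining the two cases produces the piecewise bound claimed in Proposition 4.2.
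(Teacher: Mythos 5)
Your proposal is correct and is exactly the argument the paper has in mind: the paper's proof of this proposition is the one-line remark ``It follows directly from the preceding lemma,'' meaning Proposition 4.1, and what you have done is spell out that limit argument. Your care in checking that $\alpha_m,\beta_m$ can be kept in $(0,1)$ when $s,t<1$ so that the correct branch of Proposition 4.1 is invoked, and that the denominator $\alpha+\beta-2$ stays away from zero near $(s,t)$, is precisely the content that the paper leaves implicit.
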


\begin{proof}
It follows directly from the preceding lemma.

\end{proof}

\subsection{One of topological free entropy dimensions $\ge 1$}

\begin{lemma}
Suppose that $\mathcal A$ and $\mathcal B$ are two unital C$^*$
algebras and $x_1\oplus y_1, \ldots, x_n\oplus y_n$ is a family of
self-adjoint elements that generate  $\mathcal A\bigoplus \mathcal
B$.  Then
$$
\delta_{top} (x_1\oplus y_1, \ldots, x_n\oplus y_n)\ge \max
\{\delta_{top}(x_1,\ldots,x_n), \delta_{top}(y_1,\ldots, y_n) \}.
$$
\end{lemma}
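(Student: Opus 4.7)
The plan is to embed $x$-microstates block-diagonally into direct-sum microstates, using a single auxiliary $y$-microstate of fixed size. First, I would verify that whenever $(A_1,\ldots,A_n) \in \Gamma^{(top)}_R(x_1,\ldots,x_n; k_1, \epsilon, P_1, \ldots, P_r)$ and $(B_1,\ldots,B_n) \in \Gamma^{(top)}_R(y_1,\ldots,y_n; k_2, \epsilon, P_1, \ldots, P_r)$, the block-diagonal tuple
$$
\left(\begin{pmatrix}A_1&0\\0&B_1\end{pmatrix},\ldots,\begin{pmatrix}A_n&0\\0&B_n\end{pmatrix}\right)
$$
lies in $\Gamma^{(top)}_R(x_1\oplus y_1,\ldots,x_n\oplus y_n; k_1+k_2, \epsilon, P_1,\ldots, P_r)$. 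Indeed, for every noncommutative polynomial $P$ we have $P(\text{block diag}) = \text{block diag}(P(A),P(B))$ and $P(x_i\oplus y_i) = P(x_i)\oplus P(y_i)$, so in both cases the operator norm is the maximum of the two pieces, and the $\epsilon$-approximation is inherited.

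Second, by the approximation property assumed throughout the paper (applied to $\mathcal{B}$), for each $r$ and $\epsilon$ we can fix some $k_2=k_2(r,\epsilon)$ and a single microstate $(B_1,\ldots,B_n)\in\Gamma^{(top)}_R(y_1,\ldots,y_n; k_2, \epsilon, P_1,\ldots, P_r)$. The assignment $(A_1,\ldots,A_n)\mapsto(\text{block diag with fixed }B)$ is then an operator-norm isometry of $\Gamma^{(top)}_R(x_1,\ldots,x_n; k_1,\epsilon, P_1,\ldots, P_r)$ into $\Gamma^{(top)}_R(x_1\oplus y_1,\ldots; k_1+k_2,\epsilon, P_1,\ldots, P_r)$. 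A standard diameter argument (a cover of the larger set by $\omega$-balls induces, via intersection with the isometric image, a cover of the smaller set by $2\omega$-balls centered in the smaller set --- this is essentially Lemma 2.1) then yields
$$
\nu_\infty\bigl(\Gamma^{(top)}_R(x_1\oplus y_1,\ldots; k_1+k_2, \epsilon, P_1,\ldots,P_r), \omega\bigr) \;\geq\; \nu_\infty\bigl(\Gamma^{(top)}_R(x_1,\ldots,x_n; k_1, \epsilon, P_1,\ldots,P_r), 2\omega\bigr).
$$

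Third, letting $k_1\to\infty$ with $k_2$ fixed, we have $(k_1+k_2)^2/k_1^2\to 1$. Dividing both sides of the above inequality by $-(k_1+k_2)^2\log\omega$, taking $\limsup_{k\to\infty}$ on the left, $\inf_{r,\epsilon}$ on both sides, and finally $\limsup_{\omega\to 0^+}$, while observing that $\log(2\omega)/\log\omega\to 1$ so the $2\omega$-for-$\omega$ substitution is harmless, gives $\delta_{top}(x_1\oplus y_1,\ldots,x_n\oplus y_n)\geq\delta_{top}(x_1,\ldots,x_n)$. Exchanging the roles of the two summands (fixing an $x$-microstate and varying the $y$-side) gives the analogous bound with $y$'s, and the maximum follows. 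The only real obstacle is confirming that the approximation property for $\mathcal{B}$ truly furnishes $y$-microstates at arbitrarily fine $(r,\epsilon)$, so that the embedding can be carried out uniformly inside the $\inf_{r,\epsilon}$; the remaining steps are asymptotic bookkeeping.
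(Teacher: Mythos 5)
Your proof is correct and follows essentially the same route as the paper: embed one summand's microstates block-diagonally into the direct-sum microstate space via a single fixed auxiliary microstate from the other summand, deduce the covering-number inequality $\nu_\infty(\Gamma(x\oplus y;k_1+k_2),\omega)\ge\nu_\infty(\Gamma(x;k_1),2\omega)$, and pass to the limit using $(k_1+k_2)^2/k_1^2\to 1$. The paper does the same thing with the roles of the two summands interchanged (fixing the $x$-side, varying the $y$-side) and is terser about the asymptotics and the reliance on the standing approximation-property hypothesis, but the argument is identical in substance.
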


\begin{proof}Let $R>\max\{\|x_1\oplus y_1\|,\ldots, \|x_n\oplus
y_n\|\}$ be a positive number. For any $r\ge 1$, $\epsilon>0$,
$k>k_1$, and any
$$
   \begin{aligned}
       (A_1,\ldots, A_n) &\in \Gamma_R^{(top)}(x_1  ,\ldots,x_n ;k_1,\epsilon,
      P_1,\ldots,P_{r}),\\
        (B_1,\ldots, B_n) &\in \Gamma_R^{(top)}(y_1  ,\ldots,y_n ;k-k_1,\epsilon,
      P_1,\ldots,P_{r})
   \end{aligned}
$$ we have
$$
   ( \begin{pmatrix} A_1  &  0\\ 0&B_1\end{pmatrix},\ldots, \begin{pmatrix} A_n &   0\\ 0&B_n\end{pmatrix}    ) \in \Gamma_R^{(top)}(x_1\oplus y_1, \ldots, x_n\oplus
y_n;k,\epsilon, P_1,\ldots,P_r). $$
Thus,
$$
\nu_{\infty} (\Gamma_R^{(top)}(x_1\oplus y_1, \ldots, x_n\oplus
y_n;k,\epsilon, P_1,\ldots,P_r),\omega)
\ge\nu_\infty(\Gamma_R^{(top)}(y_1,\ldots,y_n;k-k_1,\epsilon,
P_1,\ldots,P_r),2\omega).
$$
It follows that
$$
\delta_{top}(x_1\oplus y_1, \ldots, x_n\oplus
y_n)\ge\delta_{top}(y_1,\ldots,y_n).
$$Similarly,
$$
\delta_{top}(x_1\oplus y_1, \ldots, x_n\oplus
y_n)\ge\delta_{top}(x_1,\ldots,x_n).
$$ Hence we have proved the result of the lemma.
\end{proof}

\begin{theorem}
Suppose that $\mathcal A$ and $\mathcal B$ are two unital C$^*$
algebras and $x_1\oplus y_1, \ldots, x_n\oplus y_n$ is a family of
self-adjoint elements that generates $\mathcal A\bigoplus \mathcal
B$. If one of $\delta_{top}(x_1,\ldots,x_n)$ and
$\delta_{top}(y_1,\ldots,y_n)$ is larger than or equal to $1$, then
$$
\delta_{top}(x_1\oplus y_1, \ldots, x_n\oplus y_n)=\max
\{\delta_{top}(x_1,\ldots,x_n), \delta_{top}(y_1,\ldots,y_n)\}
$$

\end{theorem}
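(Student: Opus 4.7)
The plan is to obtain the theorem as a direct corollary of the two-sided bounds already established in this section: Lemma 4.4 provides the lower bound, and Proposition 4.2 provides the upper bound, with the hypothesis $s \ge 1$ or $t \ge 1$ putting us into the ``otherwise'' branch of Proposition 4.2.

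First, I would invoke Lemma 4.4 directly to obtain
\[
\delta_{top}(x_1\oplus y_1,\ldots,x_n\oplus y_n) \;\ge\; \max\{s,t\}.
\]
This half requires no work, since Lemma 4.4 is proved in full generality without any hypothesis on the sizes of $s$ and $t$.

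For the upper bound, I would fix arbitrary real numbers $\alpha > s$ and $\beta > t$. The hypothesis ensures that either $s \ge 1$ or $t \ge 1$; without loss of generality assume $s \ge 1$, whence $\alpha > 1$. This places the pair $(\alpha,\beta)$ in the ``otherwise'' case of Proposition 4.2, so
\[
\delta_{top}(x_1\oplus y_1,\ldots,x_n\oplus y_n) \;\le\; \max\{\alpha,\beta\}.
\]
Taking the infimum over all $\alpha > s$ and $\beta > t$ yields
\[
\delta_{top}(x_1\oplus y_1,\ldots,x_n\oplus y_n) \;\le\; \max\{s,t\},
\]
and combining with the lower bound gives the desired equality.

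There is no genuine obstacle here: the theorem is essentially packaging. The only subtlety worth double-checking is that the infimum over $\alpha>s,\beta>t$ of $\max\{\alpha,\beta\}$ equals $\max\{s,t\}$, which is immediate since $\max$ is continuous, and that we do not accidentally stray into the $\alpha<1,\beta<1$ branch of Proposition 4.2 while shrinking $\alpha$ toward $s$; this is guaranteed precisely by the standing assumption $s\ge 1$ (or $t\ge 1$), which forces $\alpha \ge 1$ (or $\beta \ge 1$) for all admissible choices. Thus the proof reduces to citing Lemma 4.4 and Proposition 4.2 and taking the appropriate infimum.
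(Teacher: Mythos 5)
Your proposal is correct and follows essentially the same route as the paper: the paper's proof of Theorem 4.1 simply cites Proposition 4.2 (upper bound, ``otherwise'' branch) and Lemma 4.4 (lower bound) and combines them. Your extra step of introducing $\alpha>s$, $\beta>t$ and taking an infimum is redundant here, since that passage is what takes Proposition 4.1 to Proposition 4.2, and Proposition 4.2 is already stated in terms of $s$ and $t$; but it is harmless and the argument is sound.
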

\begin{proof}
The result follows directly from Proposition 4.2 and Lemma 4.4.

\end{proof}


\subsection{Both of topological free entropy dimensions $<1$}

We start this subsection with the following definition.
\begin{definition}
Suppose that $\mathcal A$ is a unital C$^*$ algebra and $x_1,\ldots,
x_n$ is a family of self-adjoint elements in $\mathcal A$. The
family of elements $x_1,\ldots,x_n$ is called stable if for any
$\alpha<\delta_{top}(x_1,\ldots,x_n)$ there are positive numbers
$C_3>0$ and $\omega_0>0$, $r_0\in \Bbb N$, $k_0\in\Bbb N$ so that
$$
\nu_\infty(\Gamma_{R}^{(top)}(x_1,\ldots, x_n;q\cdot k_0,\frac 1 r,
P_1,\ldots, P_r), \omega)\ge C_3^{ (q\cdot k_0)^2}\left( \frac 1
\omega\right)^{\alpha \cdot  (q\cdot k_0)^2}, \forall \
0<\omega<\omega_0, r>r_0, q\in \Bbb N.
$$
\end{definition}
\begin{example} \begin{enumerate} \item From the inequality (3.3.1),   it follows that any family of
self-adjoint generators $ x_1,\ldots, x_n $ of $\mathcal M_n(\Bbb
C)$   is
 stable.

\item A  self-adjoint element $x$ in a unital C$^*$ algebra is stable (see
\cite {HaSh}).

 \item Suppose that $\mathcal K$ is the algebra of all compact operators
 in a separable Hilbert space $H$ and unital C$^*$ algebra $\mathcal A$ is the
 unitization of $\mathcal K$. Then any family of self-adjoint
 generators $x_1,\ldots, x_n$ of $\mathcal A$ is stable since $\delta_{top}(x_1,\ldots, x_n)=0$ (see Theorem 5.6 in \cite {HaSh2}). \end{enumerate}
\end{example}

\begin{notation}
 Suppose that $A\in \mathcal M_{k_1}(\Bbb C)$ and $B\in \mathcal M_{k_2}(\Bbb C)$. We denote the element
  $$\left ( \begin{aligned}
  A  & \quad 0\\
  0 &\quad B
  \end{aligned} \right )\in \mathcal M_{k_1+k_2}(\Bbb C)$$ by $A\oplus B$.
  \end{notation} \begin{notation}  Suppose that $\Gamma_1\subset \mathcal (M_{k_1}(\Bbb C))^n$ and $\Gamma_2\subset (\mathcal M_{k_2}(\Bbb C))^n$. We denote the set
  $$\left \{\left ( \begin{aligned}
  A_1 & \quad 0\\
  0 &\quad B_1
  \end{aligned} \right ), \ldots, \left ( \begin{aligned}
  A_n & \quad 0\\
  0 &\quad B_n
  \end{aligned} \right )  \ | \ (A_1,\ldots,A_n)\in \Gamma_1, \ (B_1,\ldots,B_n)\in \Gamma_2\right \}$$ in $( \mathcal M_{k_1+k_2}(\Bbb C))^n$ by $\Gamma_1\oplus \Gamma_2$.
  \end{notation}
The main goal of this subsection is to prove the following result.

\begin{proposition}   Suppose that $\mathcal A$ and
$\mathcal B$ are two unital C$^*$ algebras and $x_1\oplus y_1,
\ldots, x_n\oplus y_n$ is a family of self-adjoint elements that
generates $\mathcal A\bigoplus \mathcal B$ as a C$^*$ algebra.
Assume
$$
s=\delta_{top}(x_1,\ldots,x_n)<1 \qquad \text { and } \qquad
t=\delta_{top}(y_1,\ldots,y_n)<1.
$$ If  both families $\{x_1,\ldots,x_n\}$ and $\{y_1,\ldots,
y_n\}$ are stable, then
$$
\delta_{top}(x_1\oplus y_1, \ldots, x_n\oplus y_n)=
\frac{st-1}{s+t-2}  .
$$ Moreover,   the family of elements
$x_1\oplus y_1, \ldots, x_n\oplus y_n$ is also stable.

\end{proposition}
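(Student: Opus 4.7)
The upper bound $\delta_{top}(x_1\oplus y_1,\ldots,x_n\oplus y_n)\le (st-1)/(s+t-2)$ follows from Proposition 4.2 (both $s,t<1$), so the task reduces to proving the matching lower bound; the same construction will deliver the stability claim. Fix arbitrary $\alpha<s$ and $\beta<t$. By the stability hypothesis there exist $k_0^{(x)},k_0^{(y)},r_0\in\Bbb N$ and $C_3,\omega_0>0$ such that, for $0<\omega<\omega_0$, $r\ge r_0$, $q\in\Bbb N$,
\[
\nu_\infty\!\bigl(\Gamma_R^{(top)}(x_1,\ldots,x_n;qk_0^{(x)},\tfrac1r,P_1,\ldots,P_r),\omega\bigr)\ge C_3^{(qk_0^{(x)})^2}\bigl(\tfrac{1}{\omega}\bigr)^{\alpha(qk_0^{(x)})^2},
\]
with an analogous bound on the $y$-side. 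Set $K_0:=k_0^{(x)}k_0^{(y)}$ and let $s_0:=(1-\beta)/(2-\alpha-\beta)\in(0,1)$ be the maximizer of the function $f$ in Lemma 4.3, so $f(s_0)=(\alpha\beta-1)/(\alpha+\beta-2)$. For each large $N$, put $k=NK_0$ and choose $j\in\{1,\ldots,N-1\}$ nearest to $s_0N$; then $k_1:=jK_0$ and $k_2:=(N-j)K_0$ are simultaneously multiples of $k_0^{(x)}$ and $k_0^{(y)}$, satisfy $k_1+k_2=k$, and $k_1/k\to s_0$ as $N\to\infty$.

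Extract $\omega$-separated subsets $\Gamma_1,\Gamma_2$ of the respective microstate spaces in dimensions $k_1,k_2$, with cardinalities at least the stability bounds. Every element of $\Gamma_1\oplus\Gamma_2$ is automatically a microstate for $(x_i\oplus y_i)$. Running a parking argument modeled on Lemmas 3.2--3.3, select unitaries $\{U_l\}_{l=1}^{N_U}\subset\mathcal U(k)$ such that, for every $l\neq l'$ and every $V\in\mathcal U(k_1)\oplus\mathcal U(k_2)$, $\|U_l-U_{l'}V\|_2>c_0\omega$; a Haar-volume computation (the codimension of $\mathcal U(k_1)\oplus\mathcal U(k_2)$ in $\mathcal U(k)$ equals $k^2-k_1^2-k_2^2$) yields $N_U\ge (C_4/\omega)^{k^2-k_1^2-k_2^2}$. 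Now form the $|\Gamma_1||\Gamma_2|N_U$ candidate microstates $X^{(A,B,l)}:=U_l^*(A\oplus B)U_l$.

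The key step is that distinct triples give microstates pairwise $c_1\omega$-separated in $\|\cdot\|_2$. Since $x_1\oplus y_1,\ldots,x_n\oplus y_n$ generate $\mathcal A\oplus\mathcal B$, for any prescribed $\epsilon'>0$ there is a noncommutative polynomial $Q$ with $\|Q(x_i\oplus y_i)-(I_\mathcal A\oplus 0)\|<\epsilon'$; splitting $Q(x_i\oplus y_i)=Q(x_i)\oplus Q(y_i)$ gives $\|Q(x_i)-I_\mathcal A\|<\epsilon'$ and $\|Q(y_i)\|<\epsilon'$, so including $Q-1$ and $Q$ among the constraint polynomials forces $\|Q(A)-I_{k_1}\|$ and $\|Q(B)\|$ to be small on microstates. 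Hence $Q(A\oplus B)\approx I_{k_1}\oplus 0$ in operator norm, and polynomial Lipschitz continuity converts any $\|\cdot\|_2$-closeness of two $X^{(A,B,l)},X^{(A',B',l')}$ into $\|\cdot\|_2$-closeness of $U_l^*P_0U_l$ and $U_{l'}^*P_0U_{l'}$, where $P_0:=I_{k_1}\oplus 0$. Thus $U_{l'}U_l^*$ nearly commutes with $P_0$ in $\|\cdot\|_2$, and a block/polar-decomposition argument in the spirit of Lemma 3.1 produces $V\in\mathcal U(k_1)\oplus\mathcal U(k_2)$ with $\|U_l-U_{l'}V\|_2$ correspondingly small, contradicting the parking choice unless $l=l'$; then $(A,B)=(A',B')$ by the $\omega$-separation of $\Gamma_i$.

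Combining,
\[
\nu_2\!\bigl(\Gamma_R^{(top)}(x_1\oplus y_1,\ldots,x_n\oplus y_n;k,\tfrac1r,P_1,\ldots,P_r),c_1\omega\bigr)\ge C_5^{k^2}\bigl(\tfrac1\omega\bigr)^{\alpha k_1^2+\beta k_2^2+(k^2-k_1^2-k_2^2)}.
\]
Converting $\nu_2$ to $\nu_\infty$ via $\|\cdot\|_2\le\sqrt n\,\|\cdot\|$ on $n$-tuples, sending $N\to\infty$ so that $k_1/k\to s_0$, and invoking Lemma 4.3, one obtains $\delta_{top}(x_1\oplus y_1,\ldots,x_n\oplus y_n)\ge(\alpha\beta-1)/(\alpha+\beta-2)$. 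Letting $\alpha\to s^-$, $\beta\to t^-$ closes the lower bound. Since the estimate is uniform in $k=NK_0$ with the constants $C_5,\omega_0,r_0$ depending only on $\alpha,\beta$, the family $(x_1\oplus y_1,\ldots,x_n\oplus y_n)$ satisfies Definition 4.1, giving stability (absorbing small $N$ into the constant $C_3$ if necessary). The principal obstacle is the almost-commutation step in paragraph three: converting an $\|\cdot\|_2$-bound on $[U_{l'}U_l^*,P_0]$ into an $\|\cdot\|_2$-distance bound from $U_{l'}U_l^*$ to $\mathcal U(k_1)\oplus\mathcal U(k_2)$. This is the analog of Lemma 3.1 for the orthogonal-sum projection rather than for a simple-spectrum matrix, and demands a careful block-by-block polar decomposition together with the preservation of estimates under polynomial evaluation.
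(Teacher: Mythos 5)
Your overall strategy matches the paper's: from stability extract large microstate families in dimensions $k_1,k_2$; conjugate $\Gamma_1\oplus\Gamma_2$ by unitaries obtained from a parking/Haar-volume argument; force the unitaries to be genuinely distinct modulo $\mathcal U(k_1)\oplus\mathcal U(k_2)$ by pairing with a polynomial $Q$ that approximately reproduces the central projection $I_{\mathcal A}\oplus 0$; then count. You also correctly identify where the difficulty lies. However, there are two genuine gaps in the way you run it.

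First, the metric. You claim that distinct triples $(A,B,l)$ give microstates pairwise $c_1\omega$-separated \emph{in $\|\cdot\|_2$}. For $l=l'$ this does not follow: stability gives a lower bound for $\nu_\infty$, and Lemma 4.5 converts it only into an operator-norm-separated subset of $\Gamma_1\oplus\Gamma_2$; two such points can be $\|\cdot\|_2$-close even if they are $\omega$-apart in operator norm. The paper avoids this by working with $Pack_\infty$ throughout (see inequalities (4.3.5), (4.3.8) and the final display (4.3.9)); $\|\cdot\|_2$ appears only internally to the unitary volume estimate, and Lemma 4.7 is phrased so that its output separation is already in operator norm.

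Second, and more seriously, the order of quantifiers in the approximate-projection step. Once the polynomial $Q$ (your $Q$, the paper's $Q_m$) is fixed, the error $\|Q(x_i)-I_{\mathcal A}\|<\epsilon'$, $\|Q(y_i)\|<\epsilon'$ is a fixed constant. Tracing through your third paragraph: closeness of two microstates gives, via Lipschitz continuity, closeness of $U_l^*(Q(A)\oplus Q(B))U_l$ and $U_{l'}^*(Q(A')\oplus Q(B'))U_{l'}$; but to pass from there to closeness of $U_l^*P_0U_l$ and $U_{l'}^*P_0U_{l'}$ costs an additive $O(\epsilon')$. The parking choice requires $\|U_l-U_{l'}V\|_2 > c_0\omega$ for all block unitaries $V$, so your intended contradiction needs $L(Q)c_1\omega+O(\epsilon')<c_0\omega$, which fails as soon as $\omega\ll\epsilon'$. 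Since $\delta_{top}$ takes $\omega\to 0$ last, the argument breaks exactly in the regime that matters. The paper circumvents this in three coordinated moves: it augments the tuple with $z_m=Q_m(x\oplus y)$ and uses Lemma 4.10 to transfer the estimate back; it introduces the relaxed spectral classes $R(s_1,s_3)$ and $Q(s_2,s_3)$ (Definition 4.4) that tolerate $O(k/m^4)$ deviant eigenvalues, so the parking is done against the entire class $\mathcal S(s_1,s_2,s_3)$ rather than against the exact projection; and it tracks the resulting extra factor $(C_5/\omega)^{O(k^2/m^4)}$, which yields the error $72/m^4$ in the exponent and is eliminated by letting $m\to\infty$ at the end. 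That machinery (Lemma 4.7 and Sublemmas 4.3.1--4.3.5, with the low-rank covering Sublemma 4.3.1) is the technical heart of the proof, and ``in the spirit of Lemma 3.1'' genuinely understates it: Lemma 3.1 exploits the simple spectrum of $A$ and $W$, while here one must handle a projection-like spectrum with a controlled but nonzero defect, which forces the polar/block argument through $\Omega(k_1,k_2,s_3)$ and the covering estimate of Sublemma 4.3.4.
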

\begin{remark}
The difficulty to prove the preceding result lies in the fact that
$I_{\mathcal A}\oplus 0$ might not be in the $*$-algebra generated
by $x_1\oplus y_1, \ldots, x_n\oplus y_n$.
\end{remark}

 The proof of Proposition 4.3 will be postponed after we
 prove
 some lemmas firstly.
Recall the definition of the packing number of a set in a metric
space as follows. \begin{definition} Suppose that $X$ is a metric
space with a metric distance $d$. The packing number of a set $K$ by
$\delta$-balls in $X$, denoted by $Pack_d(K,\delta)$, is the maximal
cardinality of the subsets $F $ in $K$ satisfying for all $a,b$ in
$F$ either $a=b$ or $d(a,b)\ge \delta$.
\end{definition}

The following result follows easily from the definition of packing
number.

\begin{lemma}For any subset $K$ of $((\mathcal M_{k}(\Bbb C))^n, \|\cdot \|)$, we have
$$
Pack_\infty(K,\delta)\ge \nu_\infty(K,2\delta)\ge
Pack_\infty(K,4\delta),
$$ where $
Pack_\infty(K,\delta)$ is the packing number of the set $K$ by $\delta$-$\| \ \|$-balls in $(\mathcal M_{k}(\Bbb C))^n$.
\end{lemma}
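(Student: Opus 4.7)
The plan is to verify the two inequalities separately, using the standard relationship between packing and covering numbers in a metric space, specialized to $((\mathcal M_k(\mathbb{C}))^n, \|\cdot\|)$. Both steps are essentially extremal/pigeonhole arguments and require no special structure beyond the definitions given.

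For the left inequality $Pack_\infty(K,\delta)\ge \nu_\infty(K,2\delta)$, I would take a maximal $\delta$-separated subset $F=\{a_1,\ldots,a_N\}\subset K$ where $N=Pack_\infty(K,\delta)$; such a maximum exists because the packing number is finite (as $K$ sits inside a finite-dimensional space and, in our use case, is bounded in operator norm). By the maximality of $F$, every $x\in K$ must satisfy $\|x-a_i\|<\delta$ for some $i$: otherwise $F\cup\{x\}$ would still be $\delta$-separated, contradicting maximality. Hence the open $\delta$-$\|\cdot\|$-balls centered at the $a_i$ cover $K$, and in particular the open $2\delta$-$\|\cdot\|$-balls centered at the $a_i\in K$ form a covering of $K$ in the sense of Section 2.1. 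Thus $\nu_\infty(K,2\delta)\le N$.

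For the right inequality $\nu_\infty(K,2\delta)\ge Pack_\infty(K,4\delta)$, I would pick a covering of $K$ by $L:=\nu_\infty(K,2\delta)$ open $2\delta$-$\|\cdot\|$-balls, call them $B(c_1,2\delta),\ldots,B(c_L,2\delta)$, and pick a $4\delta$-separated set $G=\{b_1,\ldots,b_M\}\subset K$ with $M=Pack_\infty(K,4\delta)$. Each $b_j$ lies in at least one of the covering balls, so by pigeonhole, if $M>L$ then some two distinct $b_i,b_j$ lie in the same ball $B(c_l,2\delta)$. The triangle inequality then gives
\[
\|b_i-b_j\|\le \|b_i-c_l\|+\|c_l-b_j\|<2\delta+2\delta=4\delta,
\]
contradicting $4\delta$-separation. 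Hence $M\le L$, which is the claimed bound.

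The main (minor) obstacle is simply being careful about the conventions in Section 2.2: the balls are defined as open balls (strict inequality), and the packing condition is $d(a,b)\ge\delta$. These conventions are fully compatible with the two arguments above — the left inequality needs the strict inequality in the maximality step, and the right inequality needs only the triangle inequality, so no boundary issues arise. Beyond that the proof is purely formal and produces both estimates in a few lines.
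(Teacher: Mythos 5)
Your proof is correct. The paper gives no proof of this lemma, stating only that it ``follows easily from the definition of packing number,'' and the two pigeonhole/maximality arguments you give — a maximal $\delta$-separated set yields a $\delta$-cover (hence a $2\delta$-cover), and any two points of a $4\delta$-separated set must land in different $2\delta$-balls of a cover by the triangle inequality — are precisely the standard argument the authors have in mind, with the open-ball and $\ge\delta$ conventions handled correctly.
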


\begin{lemma}
Let $$  \begin{aligned}\Gamma_1 \subset (\mathcal M_{k_1}(\Bbb C))^n
\qquad \qquad \Gamma_2 \subset (\mathcal M_{k_2}(\Bbb C))^n.
\end{aligned}$$
Then, for $\delta>0$,
$$
Pack_\infty(\Gamma_1\oplus\Gamma_2,\delta) \ge
\nu_\infty(\Gamma_1,2\delta)\cdot \nu_\infty(\Gamma_2, 2\delta),
$$where $\Gamma_1\oplus\Gamma_2$ is as in Notation 4.2.
\end{lemma}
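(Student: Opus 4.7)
My plan is to exploit the standard packing-covering duality together with the basic fact that the operator norm of a block-diagonal matrix equals the maximum of the norms of its blocks. I would first choose, for $i=1,2$, a maximal $2\delta$-packing $F_i$ of $\Gamma_i$ in the operator-norm metric on $(\mathcal{M}_{k_i}(\mathbb{C}))^n$. By maximality of such a packing, every point of $\Gamma_i$ lies within operator-norm distance $2\delta$ of some point of $F_i$ (otherwise we could enlarge $F_i$), so the centers $F_i$ yield an explicit covering of $\Gamma_i$ by $2\delta$-$\|\cdot\|$-balls, and hence
\[
|F_i| \;\ge\; \nu_\infty(\Gamma_i,2\delta).
\]

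Next I would form the set
\[
F \;=\; \bigl\{(A_1\oplus B_1,\ldots,A_n\oplus B_n):(A_1,\ldots,A_n)\in F_1,\;(B_1,\ldots,B_n)\in F_2\bigr\}\;\subset\;\Gamma_1\oplus\Gamma_2,
\]
so that $|F|=|F_1|\cdot|F_2|$. For two distinct elements of $F$, one of the two component tuples must differ, so the corresponding packing condition is witnessed in one block. Using the identity
\[
\|A\oplus B\| \;=\; \max\{\|A\|,\|B\|\}
\]
coordinate-wise across the tuple, one gets that the operator-norm distance between any two distinct elements of $F$ is at least $2\delta$, hence certainly at least $\delta$. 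Therefore $F$ is a $\delta$-$\|\cdot\|$-packing of $\Gamma_1\oplus\Gamma_2$, which gives
\[
Pack_\infty(\Gamma_1\oplus\Gamma_2,\delta)\;\ge\;|F|\;\ge\;\nu_\infty(\Gamma_1,2\delta)\cdot\nu_\infty(\Gamma_2,2\delta).
\]

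There is no real obstacle here beyond unpacking definitions: the only subtlety is making sure that the operator-norm identity for block diagonal matrices is applied coordinatewise across the $n$-tuple, so the tuple norm still takes a simple maximum. In particular, the proof does not invoke Lemma 4.5 (which compares packing and covering numbers on a single set); it uses the sharper fact that a maximal $2\delta$-packing is automatically a $2\delta$-net, which is cleaner for the product construction.
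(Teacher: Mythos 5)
Your proposal is correct and follows essentially the same route as the paper: both build a packing of $\Gamma_1\oplus\Gamma_2$ from packings of the two factors, use the block-diagonal identity $\|A\oplus B\|=\max\{\|A\|,\|B\|\}$ coordinatewise, and compare with covering numbers. The only cosmetic difference is that the paper invokes its Lemma 4.5 to extract $\delta$-separated families of cardinality $\ge\nu_\infty(\Gamma_i,2\delta)$, while you obtain $2\delta$-separated families directly from the maximal-packing-is-a-net observation; this gives a $2\delta$-packing rather than just a $\delta$-packing of the sum, which is marginally tighter but yields the same stated inequality.
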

\begin{proof}
    By Lemma 4.5,  there exists a family of elements
    $\{(A_1^\lambda, \ldots,A_n^\lambda) \}_{\lambda\in\Lambda}$, or  $\{(B_1^\sigma, \ldots,B_n^\sigma)\}_{\sigma\in\Sigma}$, in $\Gamma_1$, or in $\Gamma_2$ respectively,
     such that
    $$\begin{aligned} &\|(A_1^{\lambda_1}, \ldots,A_n^{\lambda_1})-(A_1^{\lambda_2}, \ldots,A_n^{\lambda_2})\|\ge \delta, \qquad \forall \
    \lambda_1\ne \lambda_2\in\Lambda\\
    & \|(B_1^{\sigma_1}, \ldots,B_n^{\sigma_1})-(B_1^{\sigma_2}, \ldots,B_n^{\sigma_2})\|\ge\delta, \qquad \forall
    \ \sigma_1\ne \sigma_2\in \Sigma;\end{aligned}
    $$
    and
    $$
     Card(\Lambda)\ge \nu_\infty(\Gamma_1,2\delta),\qquad
     Card(\Sigma)\ge \nu_\infty(\Gamma_2, 2\delta),
    $$ where $Card(\Lambda)$, or $Card(\Sigma)$, is the cardinality
    of the set $\Lambda$, or $\Sigma$ respectively.
    Thus, if $\lambda_1\ne \lambda_2$ or $\sigma_1\ne \sigma_2$,
    $$\begin{aligned}
 &\left  \|\left(\left ( \begin{aligned}
  A_1^{\lambda_1}  & \quad 0\\
  0 &\quad B_1^{\sigma_1}
  \end{aligned} \right ),\ldots,\left ( \begin{aligned}
  A_n^{\lambda_1}  & \quad 0\\
  0 &\quad B_n^{\sigma_1}
  \end{aligned} \right )\right) -\left(\left ( \begin{aligned}
  A_1^{\lambda_2}  & \quad 0\\
  0 &\quad B_1^{\sigma_2}
  \end{aligned} \right ),\ldots,\left ( \begin{aligned}
  A_n^{\lambda_2}  & \quad 0\\
  0 &\quad B_n^{\sigma_2}
  \end{aligned} \right )\right) \right \|\\&\quad = \max \{\|(A_1^{\lambda_1}, \ldots,A_n^{\lambda_1})-(A_1^{\lambda_2}, \ldots,A_n^{\lambda_2})\|,
\|(B_1^{\sigma_1}, \ldots,B_n^{\sigma_1})-(B_1^{\sigma_2}, \ldots,B_n^{\sigma_2})\| \}\\ &\quad \ge\delta.\end{aligned}
    $$ Hence
    $$
Pack_\infty(\Gamma_1\oplus\Gamma_2,\delta) \ge
\nu_\infty(\Gamma_1,2\delta)\cdot \nu_\infty(\Gamma_2, 2\delta).
$$
\end{proof}

\begin{definition}Let $k_1,k_2,s$ be some positive integers such that
$k_1\ge 2s$, $k_2\ge 2s$. Define $\Omega(k_1,k_2,s)$ be the
collection of all these $k_1\times k_2$ matrices $T$ satisfying
$\|T\|\le 2$ and $rank(T)\le 2s$, where $rank(T)$ is the rank of the
matrix $T$.
\end{definition}

\begin{sublemma} Let $k, k_1,k_2,s$ be some positive integers such that
$k_1\ge 2s$, $k_2\ge 2s$ and $k=k_1+k_2$. Let
   $\iota$ be the embedding of $\mathcal M_{k_1,k_2}(\Bbb C)$ into
   $\mathcal M_k(\Bbb C)$ by the mapping
   $$
\iota: \ \     A \rightarrow \begin{pmatrix}
     0 & A\\
     0  & 0
     \end{pmatrix}
   $$ for any $A$ in $\mathcal M_{k_1,k_2}(\Bbb C)$ .
For any $\delta>0$, we have
$$
  \nu_2(\iota(\Omega(k_1,k_2,s)), \delta) \le \left  (  \frac {C_4}{\delta}
  \right)^{4s(k_1+k_2)+2s},
$$ where $C_4$ is a constant independent of $k_1,k_2$ and $s$.
\end{sublemma}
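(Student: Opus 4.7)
My plan is to parametrize every element of $\Omega(k_1,k_2,s)$ by a singular value decomposition and then cover each of the three factors separately. For any $T\in\Omega(k_1,k_2,s)$, we can write $T=UDV^{*}$ where $U\in\mathcal M_{k_1,2s}(\mathbb C)$ has orthonormal columns, $V\in\mathcal M_{k_2,2s}(\mathbb C)$ has orthonormal columns, and $D$ is a $2s\times 2s$ diagonal matrix with non-negative real entries in $[0,2]$ (since $\|T\|\le 2$). In particular $\|U\|,\|V\|\le 1$ and $\|D\|\le 2$.

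Next I would estimate the covering number of the space where each factor lives, in the operator norm. The closed unit ball of $\mathcal M_{k_1,2s}(\mathbb C)$ in the operator norm is a convex balanced body in a real vector space of real dimension $4sk_1$, so a standard volume-comparison argument gives a covering by at most $(C/\epsilon)^{4sk_1}$ operator-norm balls of radius $\epsilon$, with $C$ a universal constant; similarly the unit ball of $\mathcal M_{k_2,2s}(\mathbb C)$ admits a covering of cardinality $(C/\epsilon)^{4sk_2}$. The admissible diagonal matrices $D$ form a cube of real dimension $2s$, so they can be covered by $(C/\epsilon)^{2s}$ operator-norm balls.

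The multiplicative step is straightforward: if $\|U-U'\|, \|V-V'\|, \|D-D'\|\le\epsilon$ and all factors satisfy $\|U\|,\|U'\|,\|V\|,\|V'\|\le 1$ and $\|D\|,\|D'\|\le 2$, then by telescoping,
$$
\|UDV^{*}-U'D'V'^{*}\|\le 2\|U-U'\|+\|D-D'\|+2\|V-V'\|\le 5\epsilon.
$$
Since $\|A\|_2\le \|A\|$ for any $A\in\mathcal M_k(\mathbb C)$, this yields $\|\iota(UDV^{*})-\iota(U'D'V'^{*})\|_2\le 5\epsilon$. Setting $\epsilon=\delta/5$ and multiplying the three covering numbers, we obtain
$$
\nu_2(\iota(\Omega(k_1,k_2,s)),\delta)\le\Bigl(\frac{C_4}{\delta}\Bigr)^{4s(k_1+k_2)+2s}
$$
with $C_4=5C$.

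The main technical point to verify is the $(C/\epsilon)^{d}$ covering bound for the operator-norm unit ball of a rectangular matrix space, which is a standard consequence of comparing volumes in any norm on a real vector space of dimension $d$; the constant $C$ is independent of $k_1,k_2,s$. A secondary bookkeeping concern is that arbitrary elements in the operator-norm unit ball (not merely those with orthonormal columns) are used as approximants, but the telescoping estimate above only uses the uniform operator-norm bounds, so this causes no trouble.
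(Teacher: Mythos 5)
Your proof is correct and takes essentially the same route as the paper: the paper also factors each $T\in\Omega(k_1,k_2,s)$ through a rank-$2s$ decomposition (polar decomposition followed by a spectral decomposition, giving $T=(V_1(I_{2s}\oplus 0))D(V_2(I_{2s}\oplus 0))^*$) and then invokes a ``standard argument'' with a citation to Szarek for the covering-number count. You supply exactly that standard argument: the dimension count $4sk_1+4sk_2+2s$, the volume-comparison covering bound for the operator-norm unit balls and the cube, the telescoping Lipschitz estimate $\|UDV^*-U'D'V'^*\|\le 5\epsilon$, and the observation that $\|\cdot\|_2\le\|\cdot\|$ transfers the bound to the trace norm. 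One small point worth recording, which you implicitly use: the centers $U'D'V'^*$ of your net do lie in $\Omega(k_1,k_2,s)$ (rank $\le 2s$ since $D'$ is $2s\times 2s$, and $\|U'D'V'^*\|\le 2$), so the balls are admissible as a covering in the paper's sense (centers in the set) and no extra doubling of radius via Lemma~2.1 is needed.
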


\begin{proof}
For any $T$ in $\Omega(k_1,k_2,s),$ by Definition 4.3 we have $ \|T\|\le 2$ and $
rank(T) \le 2s$. Thus by polar decomposition, there are partial isometry $V_1$  in $\mathcal
M_{k_1,k_2}(\Bbb C)$, a unitary matrix   $V_2$ in $\mathcal
M_{k_2}(\Bbb C)$ and a family of numbers $0\le \lambda_1,\ldots,
\lambda_{2s}\le 2$ such that,
$$\begin{aligned}
T= &V_1 \ diag(\lambda_1,\ldots, \lambda_{2s}, 0,\ldots, 0) \ V_2^*\\
=& (V_1 (I_{2s}\oplus 0\cdot I_{k_2-2s}))\ diag(\lambda_1,\ldots,
\lambda_{2s}, 0,\ldots, 0)\ (V_2 (I_{2s}\oplus 0\cdot
I_{k_2-2s}))^*.\end{aligned}
$$
Now it is a standard argument (for example see \cite{Sza}) to show that
$$
  \nu_2(\iota(\Omega(k_1,k_2,s)), \delta) \le \left  (  \frac {C_4}{\delta}
  \right)^{4s(k_1+k_2)+2s},
$$where $C_4$ is a constant independent of $k_1,k_2$ and $s$.
\end{proof}

 Let $s_1,s_2,s_3$ be positive integers so that $s_1\ge s_3, s_2\ge s_3$.

\begin{definition}
Define $R(s_1,s_3)$ be the collection of all these self-adjoint
matrices $Q$ in $\mathcal M_{s_1+s_3}(\Bbb C)$ satisfying: there are
some unitary matrix $U_1$ in $\mathcal M_{s_1+s_3}(\Bbb C)$ and real
numbers $$\lambda_1,\ldots, \lambda_{s_1}, \ldots,
\lambda_{s_1+s_3}$$ such that (i)
$$
Q= U_1^* diag (\lambda_1,\ldots, \lambda_{s_1},\ldots,
\lambda_{s_1+s_3})U_1;
$$ and (ii) $$  \lambda_i\ge 2, \qquad \forall \ 1\le i\le s_1.  $$

Define $Q(s_2,s_3)$ be the collection of all these self-adjoint
matrices $Q$ in $\mathcal M_{s_2+s_3}(\Bbb C)$ satisfying: there are
some unitary matrix $U_2$ in $\mathcal M_{s_2+s_3}(\Bbb C)$ and real
numbers $$\mu_1,\ldots, \mu_{s_2},\ldots,
\mu_{s_2+s_3}$$ such that (i)
$$
Q= U_2^* diag (\mu_1,\ldots, \mu_{s_2},\ldots,
\mu_{s_2+s_3})U_2;
$$ and (ii) $$  |\lambda_i|\le 1, \qquad \forall \ 1\le i\le s_2.  $$

\end{definition}

\begin{sublemma}Let $\delta>0$ be a positive number.
 Let $s_1,s_2,s_3$ be positive integers so that $s_1\ge s_3, s_2\ge s_3$.
 Let $k_1=s_1+s_3$, $k_2=s_2+ s_3$
and $k=k_1+k_2$. Suppose  $X$ is a $k_1\times k_2$ complex matrix
such that, (i) $\|X\|\le 1$; and (ii) for some $Q_1$ in $R(s_1,s_3)$
and $Q_2$ in $Q(s_2,s_3)$,
$$
  \frac {Tr((Q_1X-XQ_2)^*(Q_1X-XQ_2))}{k}\le \delta.
$$
Then, there is some $T$ in $\Omega(k_1,k_2,s_3)$ (as defined in
Definition 4.3) such that
$$
\frac {Tr(( X-T)^*(X-T))}{k}\le \delta.
$$

\end{sublemma}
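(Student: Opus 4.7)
The plan is to diagonalize $Q_1$ and $Q_2$, transfer the hypothesis into an entrywise Hilbert--Schmidt bound via change of basis, and then extract a low-rank approximant by zeroing out the block of $X$ whose row and column indices correspond to the spectrally separated eigenvalues of $Q_1$ and $Q_2$.

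Concretely, I would begin by invoking Definition 4.4 to write $Q_1 = U_1^* D_1 U_1$ and $Q_2 = U_2^* D_2 U_2$ with diagonal entries $\lambda_1,\ldots,\lambda_{k_1}$ and $\mu_1,\ldots,\mu_{k_2}$ satisfying $\lambda_i\ge 2$ for $1\le i\le s_1$ and $|\mu_j|\le 1$ for $1\le j\le s_2$. Setting $X' := U_1 X U_2^*$, one has $Q_1X - XQ_2 = U_1^*(D_1X' - X'D_2)U_2$, so the hypothesis becomes
$$\sum_{i=1}^{k_1}\sum_{j=1}^{k_2}(\lambda_i - \mu_j)^2\,|X'_{ij}|^2 \;\le\; k\delta.$$
The heart of the argument is the observation that for $1\le i\le s_1$ and $1\le j\le s_2$ the spectral gap forces $|\lambda_i - \mu_j| \ge 2 - 1 = 1$, hence
$$\sum_{\substack{1\le i\le s_1 \\ 1\le j\le s_2}}|X'_{ij}|^2 \;\le\; k\delta.$$

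For the approximant, I would partition $X'$ as $\begin{pmatrix} X'_{11} & X'_{12} \\ X'_{21} & X'_{22} \end{pmatrix}$ relative to $k_1 = s_1 + s_3$ and $k_2 = s_2 + s_3$, and define
$$T \;:=\; U_1^*\begin{pmatrix} 0 & X'_{12} \\ X'_{21} & X'_{22} \end{pmatrix}U_2.$$
Then $X - T = U_1^*(X'_{11}\oplus 0)U_2$ is supported on the ``bad block'', so the previous inequality immediately yields $Tr((X-T)^*(X-T))/k \le \delta$. To verify $T\in \Omega(k_1,k_2,s_3)$, the norm bound $\|T\|\le \|X\| + \|X'_{11}\|\le 2$ is immediate from $\|X\|\le 1$, and the inner block matrix has rank at most $s_3 + s_3 = 2s_3$ since it is the sum of one matrix supported on the last $s_3$ columns and one supported on the last $s_3$ rows.

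There is no serious obstacle once the decomposition is spotted. The only bookkeeping subtlety is the rank bound: the complement of the ``bad block'' is not a single block but the union of the last $s_3$ rows with the last $s_3$ columns of $X'$, which is why Definition 4.3 demands rank $\le 2s_3$ rather than $s_3$. Everything else is a direct consequence of simultaneous diagonalization and the chosen spectral normalizations in Definition 4.4.
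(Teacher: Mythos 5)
Your proof is correct and follows essentially the same route as the paper: diagonalize $Q_1$ and $Q_2$, transfer the hypothesis to an entrywise weighted Hilbert--Schmidt bound on $X' = U_1 X U_2^*$, use the spectral separation $|\lambda_i - \mu_j| \ge 1$ to isolate the $s_1\times s_2$ corner, and zero it out to obtain $T$. The only difference is that you spell out the entrywise inequality and the rank computation a bit more explicitly than the paper does, which is a matter of exposition rather than method.
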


\begin{proof}
By the definitions of $R(s_1,s_3)$ and   $Q(s_2,s_3)$, we know there
are some unitary matrix $U_1$ in $\mathcal U(k_1)$, $U_2$ in
$\mathcal U(k_2)$, and families of real numbers $\lambda_1,\ldots,
\lambda_{k_1}$ and $\mu_1,\ldots, \mu_{k_2}$ such that (i)
$$ \begin{aligned}
 Q_1& = U_1^*diag(\lambda_1,\ldots,
\lambda_{k_1}) U_1\\ Q_2& = U_2^*diag(\mu_1,\ldots, \mu_{k_2}) U_2;
\end{aligned}
$$ and (ii)
$$
\lambda_i\ge 2, \  |\mu_j|\le 1, \quad \forall \ 1\le i\le s_1,\
1\le j\le s_2.
$$

Let
$$
   U_1XU_2^*= \left ( \begin{aligned}
    Y_{11}   & \quad Y_{12}\quad \\
    Y_{21}  & \quad Y_{22} \quad
   \end{aligned}\right )\in \ \mathcal M_{k_1,k_2}(\Bbb C),
$$
where $Y_{11}\in \mathcal M_{s_1,s_2}(\Bbb C)$, $Y_{12}\in \mathcal
M_{s_1,s_3}(\Bbb C)$, $Y_{21}\in \mathcal M_{s_3,s_2}(\Bbb C)$ and
$Y_{22}\in \mathcal M_{s_3,s_3}(\Bbb C)$.

From the facts that
$$
  \frac {Tr((Q_1X-XQ_2)^*(Q_1X-XQ_2))}{k}\le \delta,
$$and
$$
\lambda_i\ge 2, \  |\mu_j|\le 1, \quad \forall \ 1\le i\le s_1,\
1\le j\le s_2,
$$
we know that
$$
  \frac {Tr(Y_{11}^* Y_{11})}{k}\le \delta.
$$

 Let
$$
T_1 = \left ( \begin{aligned}
   0\quad    & \quad Y_{12}\quad \\
    Y_{21}  & \quad Y_{22} \quad
   \end{aligned}\right ).
$$
Then $\|T_1\|\le 2\|X\|\le 2, $   $rank(T_1)\le 2s_3 $, and $$\frac
{Tr(( X-U_1^*TU_2)^*(X-U_1^*TU_2))}{k}=\frac {Tr(Y_{11}^*
Y_{11})}{k}\le \delta.
$$
Let $T=U_1^*T_1U_2$ and we finished the proof of the sublemma.

\end{proof}

\begin{lemma}
 Let $s_1,s_2,s_3$ be positive integers so that $s_1\ge s_3, s_2\ge s_3$, and $R(s_1,s_3),    Q(s_2,s_3)$ be defined in
Definition 4.4.  Let $k_1=s_1+s_3$,  $k_2=s_2+s_3$ and
$k=k_1+k_2$.  Then there exists a family of unitary matrices $\{
   U_\gamma\}_{\gamma\in\mathcal I}$ in $\mathcal M_k(\Bbb C)$ so that (i)
   when $\gamma_1\ne\gamma_2\in \mathcal I$,
   $$
    \|U_{\gamma_1}^*(Q_1\oplus Q_2)U_{\gamma_1}-U_{\gamma_2}^*(\tilde Q_1\oplus \tilde Q_2)U_{\gamma_2}\|\ge
    \delta, \  \ \forall \ Q_1,\tilde Q_1\in R(s_1,s_3),   \ Q_2,\tilde Q_2\in Q(s_2,s_3);
   $$  and (ii)
$$
 Card (\mathcal I) \ge  (C_6\cdot 130\delta )^{-k^2}\cdot
\left( \frac {C_5}{\delta} \right )^{-(s_1^2+s_2^2+ 8s_3+
12(k_1s_3+k_2s_3))},
$$ where $C_5,C_6$ are some constants independent of
$k,s_1,s_2,s_3$;  and   $Q_1\oplus Q_2$, $\tilde Q_1\oplus \tilde
    Q_2$ are defined as in Notation 4.1.

\end{lemma}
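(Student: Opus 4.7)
The plan is to adapt the parking/exhaustion argument of Lemma~3.2 and Lemma~3.3. For each $U \in \mathcal U(k)$, define the ``shadow''
$$\Sigma(U) = \{U' \in \mathcal U(k) : \exists\, Q_1, \tilde Q_1 \in R(s_1,s_3),\ Q_2, \tilde Q_2 \in Q(s_2,s_3),\ \|U^*(Q_1 \oplus Q_2)U - (U')^*(\tilde Q_1 \oplus \tilde Q_2)U'\| < \delta\}.$$
By unitary invariance of the operator norm and of the Haar measure $\mu$, $\mu(\Sigma(U))$ is independent of $U$ and equals $\mu(\Sigma_0)$, where $\Sigma_0$ is the analogous set with $U$ replaced by $I_k$. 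A standard exhaustion procedure (as in Lemma~3.3) then produces $\{U_\gamma\}_{\gamma \in \mathcal I}$ with $\mathrm{Card}(\mathcal I) \ge 1/\mu(\Sigma_0)$ and $U_\gamma \notin \bigcup_{\gamma' < \gamma} \Sigma(U_{\gamma'})$, which immediately yields the separation in~(i).

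To bound $\mu(\Sigma_0)$, block-decompose $V = (V_{ij})_{i,j=1,2}$ with block sizes $k_1, k_2$. From $V^*(\tilde Q_1 \oplus \tilde Q_2)V = (Q_1 \oplus Q_2) + E$ with $\|E\| < \delta$, right-multiplying by $V^*$ and reading off the $(1,2)$-block yields $\|Q_1 V_{21}^* - V_{21}^* \tilde Q_2\| < \delta$, with $\|V_{21}^*\| \le 1$. Passing to normalized trace norm and invoking Sublemma~4.2 on $X = V_{21}^*$, we deduce that $V_{21}^*$ lies within trace-norm distance $O(\delta)$ of some $T \in \Omega(k_1, k_2, s_3)$. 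By Sublemma~4.1, $\Omega(k_1, k_2, s_3)$ admits a trace-norm $\delta$-cover of size at most $(C_4/\delta)^{4s_3(k_1+k_2) + 2s_3}$. Using the unitarity relations $V_{11}^* V_{11} + V_{21}^* V_{21} = I_{k_1}$ and $V_{22} V_{22}^* + V_{21} V_{21}^* = I_{k_2}$, once $V_{21}$ is fixed up to $O(\delta)$ the diagonal blocks $V_{11}, V_{22}$ are essentially isometries outside a subspace of codimension $\le 2s_3$, and the residual unitary freedom contributes trace-norm covering factors of $(C/\delta)^{s_1^2}$ and $(C/\delta)^{s_2^2}$ respectively. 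Combining these with the Haar ball-volume estimate $\mu(\text{ball of radius } C'\delta\text{ in }\mathcal U(k)) \le (C_6 \cdot 130\delta)^{k^2}$ and absorbing all constant and interaction losses into the exponent gives
$$\mu(\Sigma_0) \le (C_5/\delta)^{s_1^2 + s_2^2 + 8s_3 + 12(k_1 s_3 + k_2 s_3)} \cdot (C_6 \cdot 130\delta)^{k^2}.$$

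The main obstacle lies in making the second covering step precise: parametrizing unitaries $V \in \mathcal U(k)$ with a prescribed (approximate) low-rank $(2,1)$-block, with quantitative Haar-measure control over both the diagonal and off-diagonal degrees of freedom. A convenient tool is the CS-decomposition $V = V_L \cdot D \cdot V_R$, where $V_L, V_R \in \mathcal U(k_1) \times \mathcal U(k_2)$ and $D$ is a block-rotation matrix encoding the principal angles between the two coordinate subspaces; the low-rank constraint on $V_{21}$ forces all but at most $2s_3$ principal angles to be small, so $D$ lies in a narrow tube and the remaining freedom is carried by $V_L, V_R$ modulo the block-diagonal stabilizer. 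With this parametrization in hand, the parking procedure delivers both~(i) and~(ii).
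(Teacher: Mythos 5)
Your overall skeleton is correct and essentially matches the paper's: define shadow sets $\Sigma(U)$, bound their Haar measure, and run a parking/exhaustion argument to produce a large $\delta$-separated family $\{U_\gamma\}$. Your reduction via the $(2,1)$-block intertwining estimate and Sublemma~4.3.2 (to get $V_{21}$ close to $\Omega(k_1,k_2,s_3)$) is also the right first step and matches the role of the paper's Sublemma~4.3.3. The difference is in the second covering step, and that is exactly where the argument is left incomplete.

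The paper handles the diagonal blocks by passing through the intermediate set $\mathcal S(s_1,s_2,s_3)$ of \emph{contractions} with low-rank off-diagonals (Definition~4.5) and then covering the subset $\mathcal S_\delta$ of those near a unitary (Sublemma~4.3.4). The key computation there is a polar decomposition $S_{ii}=H_{ii}W_{ii}$, the eigenvalue inequality from Lemma~4.1 of \cite{V2} showing that all but $2s_3$ eigenvalues of $H_{ii}$ are $\|\cdot\|_2$-close to $1$, and an explicit parametrization by $\mathcal U(k_i)$, $\mathcal U(k_i)/(\mathcal U(k_i-2s_3)\oplus I_{2s_3})$, the residual eigenvalue list in $[0,2]^{2s_3}$, and the two $\Omega$-sets. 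You replace this with a CS-decomposition $V=V_L D V_R$, which is a legitimate alternative in spirit (it is the joint parametrization of the off-diagonal SVD and the diagonal polar parts), but you do not carry it through: you assert that the ``residual unitary freedom contributes trace-norm covering factors of $(C/\delta)^{s_1^2}$ and $(C/\delta)^{s_2^2}$'' without justification, and you explicitly label the quantitative Haar-measure control of the stabilizer as ``the main obstacle.'' That is the part the paper actually proves, and as written your proposal neither proves it nor gives a precise substitute. Two concrete issues you would need to resolve: (a) a naive count of the block-diagonal freedom gives $(C/\delta)^{k_1^2+k_2^2}$, not $(C/\delta)^{s_1^2+s_2^2}$; the excess $2(s_1+s_2)s_3+2s_3^2$ is absorbable into the $12(k_1+k_2)s_3$ term, but that absorption must be made explicit, since the lemma's exponent is stated with $s_1^2+s_2^2$; (b) the constraint on $V_{21}$ is only \emph{approximate} low rank (within $\|\cdot\|_2$-distance $O(\delta)$ of $\Omega(k_1,k_2,s_3)$), so the CS angles are not exactly zero outside a $2s_3$-dimensional block, and one needs a quantitative stability statement (the paper gets this for free via the trace-norm eigenvalue inequality applied to $H_{ii}^2$; in the CS picture you would need an analogous perturbation bound on the principal angles). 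Until those two points are filled in, the proof is a plan rather than a proof.

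One smaller remark: your definition of $\Sigma(U)$ directly via the operator-norm separation condition, with the observation that $\mu(\Sigma(U))$ is $U$-independent by right-invariance of Haar measure, is a clean reformulation of the paper's $\|U-US\|_2\le 2\delta$ version; but when you come to bound $\mu(\Sigma_0)$ you will in effect be forced to reintroduce the paper's set $\mathcal S_\delta$ (or the CS analogue), since the existential quantifiers over $Q_1,\tilde Q_1,Q_2,\tilde Q_2$ in your definition of $\Sigma_0$ do not by themselves give a coverable parametrized family.
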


As we will see, Lemma 4.7 is a   consequence of the Sublemma 4.3.3,
Sublemma 4.3.4 and Sublemma 4.3.5, which we will prove first. Following the
notations as before, let $s_1,s_2,s_3$, $k_1=s_1+s_3$,
$k_2=s_2+s_3$, and $k=k_1+k_2$ be  as above.

\begin{definition}

Define $\mathcal S(s_1,s_2,s_3 )$ to be the collection of all these
matrices
$$
  S= \left (\begin{aligned}  S_{11} & \quad   S_{12} \\
     S_{21}  &\quad  S_{22}
   \end{aligned}\right )
 \in \mathcal M_{k}(\Bbb C), $$ where $S_{ij}\in \mathcal
   M_{k_i,k_j}(\Bbb C)$ for $1\le i,j\le 2$, satisfying
   (i) $\|S_{i,j}\| \le 2$ for $1\le i,j\le 2$; (ii)   $S_{12}\in\Omega(k_1,k_2,s_3)$
  and  $S_{21}\in\Omega(k_2,k_1,s_3)$, where $ \Omega(k_1,k_2,s_3)$
  and  $ \Omega(k_2,k_1,s_3)$ are defined in Definition 4.3.

\end{definition}

\begin{sublemma}
  Suppose that $\delta>0$ and $U_1, U_2$ are unitary matrices in $\mathcal M_k(\Bbb C)$ so that the following
  holds:  there are some $Q_1,\tilde Q_1\in R(s_1,s_3),$ and $ Q_2,\tilde Q_2\in
  Q(s_2,s_3)$ such that
 $$
    \|U_{ 1}^*(Q_1\oplus Q_2)U_{ 1}-U_{ 2}^*(\tilde Q_1\oplus \tilde
    Q_2)U_{ 2}\|\le \delta,
   $$ where $R(s_1,s_3)$, $ Q(s_2,s_3)$ are defined in Definition 4.4 and  $Q_1\oplus Q_2$, $\tilde Q_1\oplus \tilde
    Q_2$ are as in Notation 4.1. Then, there is some $S$ in $\mathcal S(s_1,s_2,s_3)$ such that
$$
  \|U_1-U_2S\|_2\le 2 \delta.
$$
\end{sublemma}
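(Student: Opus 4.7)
The plan is to transform the operator-norm hypothesis into an approximate intertwining relation, extract block-wise commutator bounds, apply Sublemma 4.3.2 to each off-diagonal block, and assemble the result into an element of $\mathcal{S}(s_1,s_2,s_3)$.

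First I would set $M = U_1 U_2^*$ and conjugate the hypothesis by $U_2$ on the left and $U_2^*$ on the right to obtain $\|M^* P M - \tilde P\| \le \delta$, where $P = Q_1 \oplus Q_2$ and $\tilde P = \tilde Q_1 \oplus \tilde Q_2$. Since $M$ is unitary this is equivalent to $\|PM - M\tilde P\| \le \delta$. Writing $M = \begin{pmatrix} M_{11} & M_{12} \\ M_{21} & M_{22} \end{pmatrix}$ with $M_{ij} \in \mathcal{M}_{k_i,k_j}(\mathbb{C})$ and using that $P,\tilde P$ are block-diagonal, the off-diagonal blocks of $PM - M\tilde P$ produce the two intertwining bounds $\|Q_1 M_{12} - M_{12}\tilde Q_2\| \le \delta$ and $\|Q_2 M_{21} - M_{21}\tilde Q_1\| \le \delta$.

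Next I would apply Sublemma 4.3.2 to each off-diagonal block. For $M_{12}$, the hypothesis fits directly, with $Q_1 \in R(s_1,s_3)$, $\tilde Q_2 \in Q(s_2,s_3)$, and $\|M_{12}\| \le \|M\| = 1$. The operator-norm bound is converted into a normalized Hilbert--Schmidt bound via $\mathrm{Tr}(A^*A) \le \mathrm{rank}(A)\,\|A\|^2 \le k\delta^2$, giving input at most $\delta^2$ to Sublemma 4.3.2 and producing $T_{12} \in \Omega(k_1,k_2,s_3)$ with $\mathrm{Tr}((M_{12}-T_{12})^*(M_{12}-T_{12}))/k \le \delta^2$. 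For $M_{21}$ I would take adjoints: $\|\tilde Q_1 M_{21}^* - M_{21}^* Q_2\| \le \delta$ is now in the correct form of Sublemma 4.3.2 applied to $X = M_{21}^*$. Applying the sublemma and then taking adjoints back yields $T_{21} \in \Omega(k_2,k_1,s_3)$ with the analogous bound.

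Finally I would set $S = \begin{pmatrix} M_{11} & T_{12} \\ T_{21} & M_{22} \end{pmatrix}$. This lies in $\mathcal{S}(s_1,s_2,s_3)$ because the diagonal blocks inherit $\|M_{ii}\| \le 1 \le 2$ from the unitary $M$, and the off-diagonal blocks are in the required $\Omega$-spaces by construction. The block-diagonal form of $(M-S)^*(M-S)$ yields $\|M-S\|_2^2 \le 2\delta^2$, hence $\|M-S\|_2 \le \sqrt{2}\,\delta$, and substituting $U_1 = MU_2$ together with the unitary invariance of $\|\cdot\|_2$ delivers the stated bound $\|U_1 - U_2 S\|_2 \le 2\delta$. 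The main obstacle is the asymmetry between the two off-diagonal blocks: Sublemma 4.3.2 requires the $R$-type matrix on the left and the $Q$-type on the right, so $M_{21}$ forces the adjoint trick, and one must carefully chain the operator norm bound through Sublemma 4.3.2 (whose hypothesis and conclusion are both stated in the normalized Hilbert--Schmidt norm) to the final $\|\cdot\|_2$ conclusion without losing more than a constant factor.
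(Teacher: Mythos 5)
Your argument follows the paper's own proof of Sublemma 4.3.3 in all essentials: pass to a single unitary ($U_1U_2^*$ for you, $U_2U_1^*$ in the paper), use the block-diagonal structure of $Q_1\oplus Q_2$ and $\tilde Q_1\oplus\tilde Q_2$ to isolate the off-diagonal commutator bounds, feed those (after converting the operator-norm bound to a normalized Hilbert--Schmidt bound and taking an adjoint for the $(2,1)$ block) into Sublemma 4.3.2 to get $T_{12}\in\Omega(k_1,k_2,s_3)$ and $T_{21}\in\Omega(k_2,k_1,s_3)$, and reassemble into $S\in\mathcal S(s_1,s_2,s_3)$.

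One small point worth flagging: with your choice $M=U_1U_2^*$, the bound $\|M-S\|_2\le\sqrt2\,\delta$ gives, after multiplying by $U_2$ on the right, $\|U_1-SU_2\|_2\le\sqrt2\,\delta$, not literally $\|U_1-U_2S\|_2$; the paper's version (with $U_2U_1^*$) has the mirror-image of the same issue. Since $\mathcal S(s_1,s_2,s_3)$ is adjoint-closed and the set is only used via the left- or right-translates $U\mathcal S$ in the Haar-volume estimate of Sublemma 4.3.5, this is a harmless relabelling rather than a gap, but it is worth writing the final line so that the side on which $S$ multiplies matches what you actually derived.
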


\begin{proof}
Let
$$
U_2U_1^*= \left (
  \begin{aligned}
     U_{11}  & \quad U_{12}\\
     U_{21}  & \quad U_{22}
  \end{aligned}
  \right )
$$ where $U_{ij}$ is $k_i\times k_{j}$ complex matrix for $1\le
i,j\le 2$. By the conditions on $U_1, U_2$, we know that
$\|U_{12}\|\le 1$ and
$$
   \frac{Tr((U_{12}Q_2-\tilde Q_1U_{12})^*(U_{12}Q_2-\tilde
   Q_1U_{12}))}{k}\le \delta^2.
$$ By Sublemma 4.3.2, we know that there is some $T_{12}$ in
$\Omega(k_1,k_2,s_3)$ so that
$$
   \frac{Tr((U_{12}-T_{12})^*(U_{12}-T_{12}))}{k}\le \delta^2.
$$

Similarly, there is some $T_{21}$ in $\Omega(k_2,k_1,s_3)$ so that
$$
   \frac{Tr((U_{21}-T_{21})^*(U_{21}-T_{21}))}{k}\le \delta^2.
$$

 Let
$$
S= \left (
  \begin{aligned}
     U_{11}  & \quad T_{12}\\
     T_{21}  & \quad U_{22}
  \end{aligned}
  \right )
$$ be in $\mathcal M_{k}(\Bbb C)$. Now it is not hard to check that
$S$ is in $\mathcal S(s_1,s_2,s_3)$ and
$$
  \|U_1-U_2S\|_2\le 2 \delta.
$$

\end{proof}



\begin{sublemma} For any $\delta>0,$ let
$$
\mathcal S_\delta(s_1,s_2,s_3) = \{ S\in \mathcal S(s_1,s_2,s_3) \ |
\ \exists \ U\in \mathcal U(k) \text { such that } \|U-S\|_2\le
2\delta\}.
$$
   We have$$\nu_2(\mathcal S_\delta(s_1,s_2,s_3), 64\delta)\le \left( \frac {C_5}{\delta} \right
)^{s_1^2+s_2^2+ 8s_3+ 12(k_1 +k_2)s_3},
$$ where $C_5$ is some constant independent of $k, s_1,s_2,s_3$.
\end{sublemma}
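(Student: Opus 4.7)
The plan is to build a net on $\mathcal S_\delta(s_1,s_2,s_3)$ by separately covering each of the four blocks $S_{11}, S_{12}, S_{21}, S_{22}$ of an element $S\in\mathcal S_\delta(s_1,s_2,s_3)$ and then multiplying the counts. For the two off-diagonal blocks, Sublemma 4.3.1 applies directly: since $S_{12}\in\Omega(k_1,k_2,s_3)$ and $S_{21}\in\Omega(k_2,k_1,s_3)$, each of them can be covered in the normalized $\|\cdot\|_2$-norm by at most $(C_4/\delta)^{4s_3(k_1+k_2)+2s_3}$ balls, contributing to the final exponent a term of order $8s_3(k_1+k_2)+4s_3$ that will be absorbed into the stated $12(k_1+k_2)s_3+8s_3$.

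For the diagonal blocks, I will use the hypothesis $\|U-S\|_2\le 2\delta$ to transfer covering control from the corresponding diagonal blocks of a unitary $U$. From $U^{*}U=I$ one obtains $U_{11}^{*}U_{11}+U_{21}^{*}U_{21}=I_{k_1}$. Because $S_{21}$ has rank at most $2s_3$ and is $\|\cdot\|_2$-close to $U_{21}$, a singular-value truncation shows that $U_{21}$ is itself $\|\cdot\|_2$-close to some matrix of rank $\le 2s_3$; consequently $U_{11}^{*}U_{11}$ is $\|\cdot\|_2$-close to $I_{k_1}$ modulo a rank-$O(s_3)$ perturbation, so the SVD of $U_{11}$ has at most $O(s_3)$ singular values strictly below $1$. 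This gives a decomposition $U_{11}=\tilde U_{11}+R_{11}$ in which $\tilde U_{11}$ is a unitary supported on an $s_1$-dimensional subspace (the ``bulk'', costing $(C/\delta)^{s_1^2}$ to cover) while the choice of that subspace and the residual $R_{11}$ (a rank-$O(s_3)$ contraction) are absorbed into a Szarek-type factor $(C/\delta)^{O(k_1s_3)}$ analogous to Sublemma 4.3.1. An identical analysis handles $S_{22}$, yielding a factor $(C/\delta)^{s_2^2+O(k_2s_3)}$.

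Multiplying the four block-level estimates and absorbing the various $O(s_3)$ and $O(k_is_3)$ contributions into the coefficients $8s_3+12(k_1+k_2)s_3$ produces the claimed exponent $s_1^2+s_2^2+8s_3+12(k_1+k_2)s_3$. The ball-radius $64\delta$ arises from tracking triangle inequalities in $\|\cdot\|_2$ across all of the approximation steps --- the nearness of $S$ to $U$, the low-rank approximation of $U_{21}$ and $U_{12}$, the splitting of each $U_{ii}$ into an isometric core plus a rank-$O(s_3)$ correction, and assembling the four independent block nets each of radius on the order of $\delta$. The main obstacle will be the diagonal-block step: one must cleanly reduce $U_{11}$ to ``$s_1\times s_1$ unitary plus rank-$O(s_3)$ correction'' using only the approximate unitarity of $U$ and the approximate low rank of $U_{21}$, while keeping the accumulated $\|\cdot\|_2$-error bounded by $64\delta$. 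Once that reduction is in place, the final count is the straightforward product of the four block-wise covering numbers.
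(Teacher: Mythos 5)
Your plan has the right architecture and, in broad strokes, follows the route the paper takes: handle $S_{12},S_{21}$ by Sublemma~4.3.1 (low rank $\Rightarrow$ Szarek-type bound), and handle the diagonal blocks by exploiting that $S$ is $\|\cdot\|_2$-close to a unitary, reducing each of $S_{11},S_{22}$ to a ``bulk unitary plus low-rank perturbation'' whose covering cost is $s_i^2+O(k_i s_3)$. The final count as a product over blocks is also how the paper proceeds.

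The gap is exactly the one you flag as ``the main obstacle,'' and it is not cosmetic. First, the assertion that ``the SVD of $U_{11}$ has at most $O(s_3)$ singular values strictly below $1$'' is not what is true and is not what the covering argument needs. Since $U_{11}$ is a corner of a unitary, \emph{all} of its singular values are $\le 1$; what the hypothesis $\|U-S\|_2\le 2\delta$ actually gives (after passing to $U_{11}^*U_{11}=I_{k_1}-U_{21}^*U_{21}$ and comparing $U_{21}^*U_{21}$ to the rank-$\le 2s_3$ matrix $S_{21}^*S_{21}$) is an $\ell^2$ statement: all but $2s_3$ of the singular values of $U_{11}$ are close to $1$ in the aggregate $\tfrac1k\sum|\sigma_i-1|^2\lesssim\delta^2$ sense. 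Passing from the $\|\cdot\|_2$-perturbation of $I_{k_1}$ by a low-rank matrix to this conclusion is precisely a Weyl/eigenvalue-interlacing estimate; in the paper this is Lemma~4.1 of Voiculescu's~\cite{V2}, applied after taking the polar decomposition $S_{11}=H_{11}W_{11}$ and bounding $\|H_{11}^2-(I_{k_1}-S_{12}S_{12}^*)\|_2$ via $\|SS^*-I_k\|_2\le 16\delta$. Your writeup invokes no such spectral comparison, so the claimed decomposition $U_{11}=\tilde U_{11}+R_{11}$ is heuristic at the point where it is doing all the work.

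Second, the parametrization has to be set up so that only \emph{one} full unitary of size $k_1$ appears per diagonal block, plus a quotient/Grassmannian factor whose dimension is $O(k_1s_3)$. The paper achieves this by writing $S_{11}=H_{11}W_{11}$ with $W_{11}\in\mathcal U(k_1)$ and $H_{11}\approx U_{11}^*\,\mathrm{diag}(1,\dots,1,\lambda_{k_1-2s_3+1},\dots,\lambda_{k_1})\,U_{11}$ where $U_{11}$ lives in the quotient $\mathcal U(k_1)/(\mathcal U(k_1-2s_3)\oplus I_{2s_3})$, giving cost $(C/\delta)^{k_1^2+(k_1^2-(k_1-2s_3)^2)+O(s_3)}$. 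A naive SVD $U_{11}=V\Sigma V'^*$ would use two independent $\mathcal U(k_1)$-valued parameters and overcount to $(C/\delta)^{2k_1^2}$. Your partial-isometry-plus-Grassmannian sketch avoids this in principle, but that is exactly the step that must be made precise (rank of $\tilde U_{11}$, which Grassmannians, and how the ``bad'' singular directions and the $R_{11}$ piece are parametrized without double-counting). Until the spectral estimate and this bookkeeping are spelled out, the proof is incomplete, although the outline is headed toward the same count the paper obtains.
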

\begin{proof}
Assume $$
  S= \left (\begin{aligned}  S_{11} & \quad   S_{12} \\
     S_{21}  &\quad  S_{22}
   \end{aligned}\right )
 , $$ is in $\mathcal S_\delta(s_1,s_2,s_3)$, where $S_{ij}$ is $k_i\times k_j$  complex matrix for $1\le i,j\le 2$.

Assume that
 $$
   S_{11}=H_{11}W_{11}
 $$ is the polar decompositions of elements $S_{11}$  in
 $\mathcal M_{k_1}(\Bbb C)$, where $W_{11}  $ is unitary
 matrix in  $\mathcal M_{k_1}(\Bbb C)$ and $H_{11}$ is a positive  matrix in  $\mathcal M_{k_1}(\Bbb C)$. From the fact that
$\|U-S\|_2\le  2\delta$, it follows that $$
   \begin{aligned}
   (16\delta)^2&\ge(\|(S-U)S^*\|_2+\|U(S- U)^*\|_2)^2\ge\|SS^*-I_k\|_2^2 \\ &\ge
   \frac {Tr ((H_{11}^2- (I_{k_1}-S_{12}S_{12}^*))^2)}
   k.
   \end{aligned}
$$
Let $$2\ge \lambda_1 \ge \lambda_2 \ge \cdots \ge \lambda_{k_1}\ge 0,$$ be the
eigenvalues of $H_{11}$ in $\mathcal M_{k_1}(\Bbb C)$ arranged in
the decreasing order. Note that $S_{12}$ is in
$\Omega(k_1,k_2,s_3)$. By the Definition 4.3, $S_{12}$ is a
$k_1\times k_2$ complex matrix satisfying $\|S_{12}\|\le 2$ and
$rank(S_{12})\le 2s_3$. We can assume that
$$4\ge \mu_1\ge  \mu_2\ge \cdots, \ge \mu_{2s_3} \ge  0   \ge \cdots \ge 0$$
are eigenvalues of $S_{12}S_{12}^*$ in $\mathcal M_{k_1}(\Bbb C)$
arranged in the decreasing order.
 By Lemma 4.1 in
\cite{V2}, we have
$$
   k(16\delta)^2\ge  {Tr ((H_{11}^2- (I_{k_1}-S_{12}S_{12}^*))^2)}
    \ge \sum_{i=1}^{k_1-2s_3}|\lambda_i^2-1|^2+
  \sum_{i=k_1-2s_3+1}^{k_1} |\lambda_i^2+\mu_i-1|^2\ge
  \sum_{i=1}^{k_1-2s_3}|\lambda_i-1|^2.
$$
Thus,  there is some $$U_{11} \in \mathcal U(k_1)/(\mathcal
U(k_1-2s_3)\oplus I_{2s_3})$$ such that
\begin{align}
   \frac {Tr(H_{11}-U_{11}^* diag(1,1,\ldots, 1,
   \lambda_{k_1-2s_3+1},\ldots, \lambda_{k_1})U_{11} )^2}{k}=\frac{ \sum_{i=1}^{k_1-2s_3}|\lambda_i-1|^2}{k} \le
   (16\delta)^2.
\end{align}

Similarly, assume that $$ S_{22}=H_{22}W_{22}
$$ is the polar decomposition of $S_{22}$ in $\mathcal M_{k_2}(\Bbb
C)$ where $W_{22}$ is a unitary matrix and $H_{22}$ is a positive
matrix in $\mathcal M_{k_2}(\Bbb C)$. Then there are some
$$U_{22} \in \mathcal U(k_2)/(\mathcal U(k_2-2s_3)\oplus I_{2s_3})$$ and some $0\le \sigma_{k_2-2s_3+1},\ldots,
\sigma_{k_2}\le 2$ such that
\begin{align}
   \frac {Tr(H_{22}-U_{22}^* diag(1,1,\ldots, 1,
   \sigma_{k_2-2s_3+1},\ldots, \sigma_{k_2})U_{22})^2}{k} \le
   (16\delta)^2.
\end{align}
Define the mapping $\rho$ from the space $$\begin{aligned} &\mathcal X= ( \
\mathcal U(k_1), \frac {\|\cdot\|_{Tr}}{\sqrt k})\  \times \
(\mathcal U(k_2), \frac {\|\cdot\|_{Tr}}{\sqrt k})\   \times \
(\mathcal U(k_1)/(\mathcal U(k_1-2s_3)\oplus  I_{2s_3}), \frac
{\|\cdot\|_{Tr}}{\sqrt k})\ \\
&\qquad \times  \ (\mathcal U(k_2)/(\mathcal U(k_2-2s_3)\oplus
I_{2s_3}), \frac {\|\cdot\|_{Tr}}{\sqrt k}) \  \times\
(\Omega(k_1,k_2,s_3),\frac {\|\cdot\|_{Tr}}{\sqrt k}) \ \times \
(\Omega(k_2,k_1,s_3),\frac {\|\cdot\|_{Tr}}{\sqrt
k})  \\
& \qquad \times \ \{( \lambda_{k_1-2s_3+1},\ldots, \lambda_{k_1})\ |
\ 0\le \lambda_j\le 2, \forall \ k_1-2s_3+1\le j\le k_1\} \\ &
\qquad \times \ \{( \sigma_{k_2-2s_3+1},\ldots, \sigma_{k_2})\ | \
0\le \sigma_j\le 2, \forall \ k_2-2s_3+1\le j\le k_2\}\end{aligned}
$$ into $\mathcal S$ by sending $$
\begin{aligned} (W_{11},&W_{22}, U_{11},U_{22},
   S_{12}, S_{21},(
   \lambda_{k_1-2s_3+1},\ldots, \lambda_{k_1}), (   \sigma_{k_2-2s_3+1},\ldots, \sigma_{k_2}))\in \mathcal X \end{aligned}
$$  to
$$
\tilde S= \left (\begin{aligned}  U_{11}^*\tilde H_{11}U_{11}W_{11}   & \qquad \qquad S_{12} \\
     S_{21}\quad\qquad   & \quad U_{22}^*\tilde H_{22}U_{22}W_{22} \quad
     \quad
   \end{aligned}\right ) \ \in  \mathcal M_k(\Bbb C),
$$ where
$$
   \begin{aligned}
  \tilde H_1 &=  diag(1,1,\ldots, 1,
   \lambda_{k_1-2s_3+1},\ldots, \lambda_{k_1}) \qquad
  \tilde H_2&= diag(1,1,\ldots, 1,
   \sigma_{k_2-2s_3+1},\ldots, \sigma_{k_2}).
   \end{aligned}
$$
  By inequalities (4.3.1) and  (4.3.2), we know for any $S\in\mathcal S_\delta$, there is some $x\in\mathcal X$ satisfying
  $$
    \|S-\rho(x)\|_2\le 16\sqrt 2 \delta.
  $$Computing the covering number of $\rho(\mathcal X)$ by combining with   Sublemma 4.3.1 and Lemma 2.1, we get
$$\begin{aligned}
\nu_2(\mathcal S_\delta(s_1,s_2,s_3), 60\delta) &\le \left( \frac
{C_5}{\delta} \right )^{k_1^2+k_2^2+
k_1^2-(k_1-2s_3)^2+k_2^2-(k_2-2s_3)^2+ 8(k_1 +k_2)s_3 +4s_3+
4s_3}\\&\le \left( \frac {C_5}{\delta} \right )^{k_1^2+k_2^2+ 8s_3+
12(k_1s_3+k_2s_3)},\end{aligned}
$$ where $C_5$ is some constant independent of $k, s_1,s_2,s_3$.

\end{proof}

\begin{sublemma}
   For every $U\in \mathcal U(k)$, let
$$
\Sigma(U)= \{W\in \mathcal U(k)  \ | \ \exists \ S  \in \mathcal
S(s_1,s_2,s_3) \text{ such that }   \ \|W- US\|_2\le   { 2 \delta}
\}.
$$  Then
 the volume of
$\Sigma(U)$ is bounded by the following:
$$
\mu(\Sigma(U)) \le (C_6\cdot 130\delta)^{k^2}\cdot  \left( \frac
{C_5}{\delta} \right )^{s_1^2+s_2^2+ 8s_3+ 12(k_1s_3+k_2s_3)},
$$ where $\mu$ is the normalized Haar measure on the unitary group $\mathcal U(k)$ and $C_5,C_6$ are some constants independent of
$k,s_1,s_2,s_3$.

\end{sublemma}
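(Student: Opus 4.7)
The plan is to reduce bounding $\mu(\Sigma(U))$ to the covering estimate already proved in Sublemma 4.3.4, combined with the standard Haar-volume estimate for $\|\cdot\|_2$-balls in $\mathcal U(k)$ that has been used earlier (e.g.\ in Lemma 3.2).

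First I would observe a crucial reduction: if $W\in\Sigma(U)$, then $W$ is unitary and $\|W-US\|_2\le 2\delta$ for some $S\in\mathcal S(s_1,s_2,s_3)$. Multiplying by $U^*$ on the left (which preserves the $\|\cdot\|_2$-norm), one gets $\|U^*W-S\|_2\le 2\delta$ with $U^*W\in\mathcal U(k)$. By the very definition in Sublemma 4.3.4, this forces $S\in\mathcal S_\delta(s_1,s_2,s_3)$. So although $\Sigma(U)$ is defined using the potentially large set $\mathcal S(s_1,s_2,s_3)$, only the much smaller subset $\mathcal S_\delta(s_1,s_2,s_3)$ is ever used.

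Next, I would apply Sublemma 4.3.4 to cover $\mathcal S_\delta(s_1,s_2,s_3)$ by $\|\cdot\|_2$-balls of radius $64\delta$, with centers $S_1,\ldots,S_N$ and
$$N\le \left(\frac{C_5}{\delta}\right)^{s_1^2+s_2^2+8s_3+12(k_1+k_2)s_3}.$$
By the triangle inequality, every $W\in\Sigma(U)$ satisfies $\|W-US_i\|_2\le 66\delta$ for some $i$, so
$$\Sigma(U)\subset \bigcup_{i=1}^{N}\{W\in\mathcal U(k)\mid \|W-US_i\|_2\le 66\delta\}.$$

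Finally, I would invoke the standard estimate that any $\|\cdot\|_2$-ball of radius $r$ in $\mathcal U(k)$ has normalized Haar measure at most $(C_6 r)^{k^2}$ for a universal constant $C_6$ (the very same estimate that was used to bound $\mu(\text{ball of radius }30n^2\delta)$ in Lemma 3.2). Summing over the $N$ translated balls gives
$$\mu(\Sigma(U))\le N\cdot (C_6\cdot 66\delta)^{k^2}\le (C_6\cdot 130\delta)^{k^2}\cdot\left(\frac{C_5}{\delta}\right)^{s_1^2+s_2^2+8s_3+12(k_1 s_3+k_2 s_3)},$$
which is the desired bound (with considerable slack in the factor $130$). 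The entire argument is essentially a bookkeeping assembly; the only nontrivial step is the passage from $\mathcal S$ to $\mathcal S_\delta$, and that is immediate from the observation above. All the genuine work—parametrizing $\mathcal S_\delta$ by singular-value data and polar decompositions of the diagonal blocks—has already been done in Sublemma 4.3.4, so I do not expect a main obstacle here.
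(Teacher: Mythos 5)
Your proof is correct and follows essentially the same route as the paper: reduce to covering $\mathcal S_\delta(s_1,s_2,s_3)$ via Sublemma 4.3.4, translate by $U$, and then apply the standard Haar-volume estimate for $\|\cdot\|_2$-balls in $\mathcal U(k)$ together with a union bound. The only difference is that you make explicit the observation that any $S$ witnessing $W\in\Sigma(U)$ must automatically lie in $\mathcal S_\delta$ (since $U^*W$ is unitary), a point the paper uses implicitly; your constant tracking ($66\delta$ absorbed into $130\delta$) is harmless slack.
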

\begin{proof} \ For any $\delta>0,$ let
$$
\mathcal S_\delta(s_1,s_2,s_3) = \{ S\in \mathcal S(s_1,s_2,s_3) \ |
\ \exists \ U\in \mathcal U(k) \text { such that } \|U-S\|_2\le
2\delta\}.
$$ It follows from the preceding sublemma that
$$\begin{aligned}
\nu_2(\mathcal S_\delta(s_1,s_2,s_3), 60\delta) &\le  \left( \frac
{C_5}{\delta} \right )^{s_1^2+s_2^2+ 8s_3+ 12(k_1s_3+k_2s_3)}
\end{aligned}
$$ where $C_5$ is a   constant independent of $s_1,s_2,s_3$. Thus, by Lemma 2.1, the covering number of the set $\Sigma(U)$ by the $130\delta $-$\|\cdot\|_2$-balls
 in $\mathcal M_k(\Bbb C)$  is bounded by
$$
\nu_2(\Sigma(U),   {130\delta} ) \le \left( \frac {C_5}{\delta}
\right )^{s_1^2+s_2^2+ 8s_3+ 12(k_1s_3+k_2s_3)}.
$$
But the ball of radius $130\delta $ in $\mathcal U(k)$ has a volume
bounded by
$$
\mu(\text {ball of radius $130\delta $ in $\mathcal U(k)$})\le (C_6\cdot 130\delta
)^{k^2},
$$ where $C_6$ is a universal constant.
Thus
$$
\mu(\Sigma(U)) \le  (C_6\cdot 130\delta )^{k^2}\cdot  \left( \frac
{C_5}{\delta} \right )^{s_1^2+s_2^2+ 8s_3+ 12(k_1s_3+k_2s_3)}.
$$
\end{proof}

\begin{proof} [Proof of Lemma 4.7:]  \
For every $U\in \mathcal U(k)$, define
$$
\Sigma(U)= \{W\in \mathcal U(k)  \ | \ \exists S\in \mathcal S(s_1,s_2,s_3),
\text{ such that }   \|W- US\|_2\le 2{ { \delta} }\}.
$$
By previous lemma, we have
$$
\mu(\Sigma(U)) \le (C_6\cdot 130\delta )^{k^2}\cdot  \left( \frac
{C_5}{\delta} \right )^{s_1^2+s_2^2+ 8s_3+ 12(k_1s_3+k_2s_3)}.
$$
 A ``parking" (or  exhausting) argument will show
the existence of a family of unitary elements $\{ U_i
\}_{i=1}^N\subset \mathcal U(k)$ such that
$$
N\ge (C_6\cdot 130\delta )^{-k^2}\cdot  \left( \frac {C_5}{\delta}
\right )^{-(s_1^2+s_2^2+ 8s_3+ 12(k_1s_3+k_2s_3))}.
$$  and
$$
  U_i \  \text { is not contained in } \cup_{j=1}^{i-1}\Sigma (U_j).
$$
Hence $$ \| U_i-U_jS\|_2\ge   { { 2\delta} } ,\qquad \forall \ S\in
\mathcal S(s_1,s_2,s_3), \text{ with }   \ \forall 1\le j<i \le N.
$$
By Sublemma 4.3.3, we know that for all $1\le j<i \le N$
$$
    \|U_{i}^*(Q_1\oplus Q_2)U_{i}-U_{j}^*(\tilde Q_1\oplus \tilde Q_2)U_{j}\|\ge
    \delta, \  \ \forall \ Q_1,\tilde Q_1\in R(s_1,s_3),   \ Q_2,\tilde Q_2\in Q(s_2,s_3);
   $$
i.e.  there exists a family of unitary matrices $\{
   U_\gamma\}_{\gamma\in\mathcal I}$ in $\mathcal M_k(\Bbb C)$ so that (i)
   when $\gamma_1\ne\gamma_2\in \mathcal I$,
    $$
    \|U_{\gamma_1}^*(Q_1\oplus Q_2)U_{\gamma_1}-U_{\gamma_2}^*(\tilde Q_1\oplus \tilde Q_2)U_{\gamma_2}\|\ge
    \delta, \  \ \forall \ Q_1,\tilde Q_1\in R(s_1,s_3),   \ Q_2,\tilde Q_2\in Q(s_2,s_3);
   $$   and (ii)
$$
 Card (\mathcal I) \ge  (C_6\cdot 130\delta )^{-k^2}\cdot
\left( \frac {C_5}{\delta} \right )^{-(s_1^2+s_2^2+ 8s_3+
12(k_1s_3+k_2s_3))},
$$ where $C_5,C_6$ are some constants independent of
$k,s_1,s_2,s_3$ and $R(s_1,s_3),     Q(s_2,s_3)$ are defined in
Definition 4.4.
\end{proof}

\begin{lemma}Let $ k_1, m\ge 2  $ be some positive integers.
   Suppose that $Q$ is a self-adjoint element in $\mathcal M_{k_1}^{s.a.}(\Bbb
   C)$ such that
   $$
       \|Q-3I_{k_1} \|<\frac 2 {m^3},
   $$ where $I_{k_1}$ is the identity matrix of $\mathcal
   M_{k_1}(\Bbb C)$. Then     $Q $ is in $  R( k_1-\frac
   {4k_1}{m^4},  \frac
   {4k_1}{m^4})$, where $ R( k_1-\frac
   {4k_1}{m^4},  \frac
   {4k_1}{m^4})$ is defined in
Definition 4.4.
\end{lemma}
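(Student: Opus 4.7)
The proof is essentially immediate from the spectral theorem, and there is no real obstacle to overcome; the content of the lemma is just a convenient packaging of an eigenvalue estimate into the language of Definition 4.4.

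The plan is to diagonalize $Q$ and observe that the hypothesis $\|Q - 3I_{k_1}\| < 2/m^3$ forces \emph{every} eigenvalue of $Q$ to be well above $2$, after which membership in $R\!\left(k_1 - \tfrac{4k_1}{m^4},\, \tfrac{4k_1}{m^4}\right)$ is automatic regardless of how the eigenvalues are split into the two groups of sizes $s_1$ and $s_3$.

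More concretely, first I would write, via the spectral theorem, $Q = U_1^{*}\,\mathrm{diag}(\lambda_1,\ldots,\lambda_{k_1})\,U_1$ for some unitary $U_1 \in \mathcal{U}(k_1)$ and real $\lambda_1,\ldots,\lambda_{k_1}$. Since $Q - 3I_{k_1}$ is self-adjoint, $\|Q - 3I_{k_1}\|$ equals the spectral radius of $Q - 3I_{k_1}$, so $|\lambda_i - 3| \le \|Q - 3I_{k_1}\| < 2/m^3$ for all $1 \le i \le k_1$. For $m \ge 2$, this yields
\[
\lambda_i \;>\; 3 - \frac{2}{m^3} \;\ge\; 3 - \frac{2}{8} \;=\; \frac{11}{4} \;>\; 2, \qquad \forall\, 1\le i\le k_1.
\]

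Finally, setting $s_1 = k_1 - \tfrac{4k_1}{m^4}$ and $s_3 = \tfrac{4k_1}{m^4}$ (so that $s_1 + s_3 = k_1$ and $s_1 \ge s_3$ for $m \ge 2$), the required condition (ii) of Definition 4.4, namely $\lambda_i \ge 2$ for $1 \le i \le s_1$, follows at once from the display above since it in fact holds for \emph{all} $1 \le i \le k_1$. Thus $Q \in R\!\left(k_1 - \tfrac{4k_1}{m^4},\, \tfrac{4k_1}{m^4}\right)$, completing the proof. The only mildly delicate point is the integrality of $s_1$ and $s_3$, which I would either absorb into the application by taking integer parts or by assuming (as will be the case when the lemma is invoked) that $m^4$ divides $k_1$.
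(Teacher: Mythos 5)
Your proof is correct, and it is actually \emph{simpler} than the one in the paper. You observe that the hypothesis $\|Q-3I_{k_1}\|<2/m^3$ is an \emph{operator}-norm bound, and since $2/m^3\le 1/4<1$ for $m\ge 2$, every eigenvalue of $Q$ lies in $(3-2/m^3,\,3+2/m^3)\subset(2,4)$; hence $Q$ belongs to $R(s_1,s_3)$ for \emph{every} decomposition $s_1+s_3=k_1$ with $s_1\ge s_3$, and in particular for the one named in the lemma. No counting of exceptional eigenvalues is needed.

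The paper takes a weaker route: it immediately passes from the operator-norm hypothesis to the inequality $\mathrm{Tr}\bigl((Q-3I_{k_1})^2\bigr)\le k_1\,(2/m^3)^2$ (i.e. a $\|\cdot\|_2$-bound) and then, invoking Lemma 4.1 of \cite{V2}, counts the eigenvalues $\lambda_i$ with $|\lambda_i-3|>1/m$, showing there are at most $4k_1/m^4$ of them. That argument is logically correct but proves a strictly stronger statement than is needed; it would continue to work under the much weaker hypothesis $\|Q-3I_{k_1}\|_2<2/m^3$, which is likely why the authors wrote it that way (or they intended the $\|\cdot\|_2$ hypothesis and the lemma statement carries a harmless typo, since in the application to Proposition 4.3 only the operator-norm bound is actually available from the microstate condition). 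Your version buys brevity; the paper's version buys robustness under a trace-norm hypothesis that, as stated, is not actually assumed.

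One shared caveat, which you rightly flag and the paper leaves implicit: $4k_1/m^4$ need not be an integer, so $R\bigl(k_1-\tfrac{4k_1}{m^4},\tfrac{4k_1}{m^4}\bigr)$ should be read with a ceiling on the second argument (and correspondingly a floor on the first). This is harmless because $R(s_1,s_3)\subseteq R(s_1',s_3')$ whenever $s_1\ge s_1'$ and $s_1+s_3=s_1'+s_3'$, and it is also handled downstream in the proof of Proposition 4.3 by the inclusion into $R\bigl(qbk_1-\tfrac{4k}{m^4},\tfrac{4k}{m^4}\bigr)$.
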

\begin{proof}
Suppose that $\lambda_1\ge \lambda_2\ge \ldots\ge \lambda_{k_1}$ are
the eigenvalues of $Q$. Let
$$\begin{aligned}
  T_1&= \{i \in \Bbb N\ | 1\le i\le k_1 \text { and }
|\lambda_i-3|\le \frac 1 m \}
\end{aligned}$$ and $$
T_2=\{1,2,\ldots,k_1 \}\setminus T_1.
$$ By
Lemma 4.1 in \cite {V2}, we have
$$\begin{aligned}
 k_1\left (\frac 2 {m^3}\right)^2&\ge  Tr((Q-  3I_{k_1} )^2)\ge \sum_{i\ \in \{1,\ldots,k_1\}\setminus T_1} |\lambda_i-3|^2  \ge \left
  ( \frac 1m  \right)^2 \ card(  T_2),\end{aligned}
$$  where $card(  T_2)$ is the cardinality of the set $  T_2$. Thus
$$
card (  T_2) \le \frac {4k_1}{m^4}.
$$Hence, by Definition 4.3, we have   $Q$ is in $R( k_1-\frac
   {4k_1}{m^4},  \frac
   {4k_1}{m^4})$.

\end{proof}

Similarly, we have the following result.

\begin{lemma}Let $ k_2, m\ge 2  $ be some positive integers.
   Suppose that $Q$ is a self-adjoint element in $\mathcal M_{k_2}^{s.a.}(\Bbb
   C)$ such that
   $$
       \|Q \|<\frac 2 {m^3}.
   $$   Then  $Q$ is in $Q( k_2-\frac
   {4k_2}{m^4},  \frac
   {4k_2}{m^4})$, where $Q( k_2-\frac
   {4k_2}{m^4},  \frac
   {4k_2}{m^4})$ is defined in
Definition 4.4.
\end{lemma}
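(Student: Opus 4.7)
The plan is to mirror the proof of Lemma 4.8 almost verbatim, simply replacing the condition ``far from $3$'' by ``far from $0$''. The hypothesis $\|Q\|<2/m^3$ is a norm bound, so every eigenvalue $\mu_i$ of the self-adjoint matrix $Q$ satisfies $|\mu_i|<2/m^3$. For $m\ge 2$ this already gives $|\mu_i|<1$, so the conclusion $Q\in Q(k_2-4k_2/m^4,\, 4k_2/m^4)$ is essentially a tautology once Definition 4.4 is unwound; the only content is to count how many eigenvalues are forced to be small enough to place in the ``$s_2$-block''.

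To match the template of Lemma 4.8 exactly, I would first arrange the eigenvalues in decreasing order of absolute value (equivalently, choose the spectral decomposition $Q=U_2^{*}\operatorname{diag}(\mu_1,\ldots,\mu_{k_2})U_2$ with $|\mu_1|\ge |\mu_2|\ge\cdots\ge|\mu_{k_2}|$). Then I would partition the indices as
\[
T_1=\{i : 1\le i\le k_2,\ |\mu_i|\le \tfrac{1}{m}\},\qquad T_2=\{1,\ldots,k_2\}\setminus T_1.
\]
By the Hoffman--Wielandt inequality (Lemma 4.1 of \cite{V2}) applied to $Q$ and the zero matrix,
\[
k_2\left(\tfrac{2}{m^3}\right)^2 \;\ge\; \operatorname{Tr}(Q^2) \;\ge\; \sum_{i\in T_2}|\mu_i|^2 \;\ge\; \left(\tfrac{1}{m}\right)^2\operatorname{card}(T_2),
\]
so $\operatorname{card}(T_2)\le 4k_2/m^4$. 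Consequently $\operatorname{card}(T_1)\ge k_2-4k_2/m^4$, and for those indices we have $|\mu_i|\le 1/m\le 1$. After reordering so that the indices of $T_1$ come first, the decomposition $k_2=(k_2-4k_2/m^4)+(4k_2/m^4)$ exhibits $Q$ as an element of $Q(k_2-4k_2/m^4,\, 4k_2/m^4)$ per Definition 4.4.

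There is no real obstacle here: the only point to watch is that Definition 4.4 asks for the \emph{first} $s_2$ diagonal entries to satisfy the smallness condition, which is why the reordering step at the end is needed. The bound $\|Q\|<2/m^3$ is actually much stronger than what is required (any bound $<1$ would give the same conclusion, indeed with $s_3=0$), but the above proof is phrased identically to Lemma 4.8 so that both lemmas can later be invoked in a parallel way in the proof of Proposition 4.3.
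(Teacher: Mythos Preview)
Your proposal is correct and is precisely the ``similarly'' that the paper intends: it mirrors the proof of Lemma~4.8 with $3$ replaced by $0$, and the eigenvalue-counting argument via $\operatorname{Tr}(Q^2)$ goes through verbatim. Your parenthetical observation is also right---since $\|Q\|<2/m^3\le 1/4$ already forces \emph{every} eigenvalue to have modulus below $1$, membership in $Q(k_2-4k_2/m^4,\,4k_2/m^4)$ is in fact immediate from Definition~4.4 without any counting; the elaborate argument is kept only for parallelism with Lemma~4.8.
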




\begin{lemma}
Suppose that $\mathcal A$ and $\mathcal B$ are two unital C$^*$
algebras and $x_1\oplus y_1, \ldots, x_n\oplus y_n$ is a family of
self-adjoint elements that generates $\mathcal A\bigoplus \mathcal
B$. For $m\ge 2$, choose
$$
z_m=P_m(x_1\oplus y_1, \ldots, x_n\oplus y_n)
$$ to be a self-adjoint element in $\mathcal A\bigoplus \mathcal B$,
where $P_m(x_1\oplus y_1, \ldots, x_n\oplus y_n)$ is a
noncommutative polynomial of $x_1\oplus y_1, \ldots, x_n\oplus y_n$,
satisfying
$$
   \|z_m-3I_{\mathcal A}\oplus 0\|\le \frac 1 {m^3}.
$$
Then
$$
\delta_{top}(x_1\oplus y_1, \ldots, x_n\oplus y_n, z_m )\le
\delta_{top} (x_1\oplus y_1, \ldots, x_n\oplus y_n).
$$
\end{lemma}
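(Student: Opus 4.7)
The plan is to exploit the fact that $z_{m}$ is a noncommutative polynomial $P_{m}(x_{1}\oplus y_{1},\ldots,x_{n}\oplus y_{n})$ in the original generators. This should force the extra coordinate $Z$ in any microstate for $(x_{1}\oplus y_{1},\ldots,x_{n}\oplus y_{n},z_{m})$ to be operator-norm close to $P_{m}(X_{1},\ldots,X_{n})$, making $Z$ essentially redundant. Hence one expects a covering of the microstates of $(x_{1}\oplus y_{1},\ldots,x_{n}\oplus y_{n})$ to lift essentially for free to a covering of the microstates of the enlarged family.

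Fix $R>\max\{\|x_{i}\oplus y_{i}\|,\|z_{m}\|\}$ and choose $\omega>0$ small. Given $r\in\mathbb{N}$, I would first enlarge it to some $r'\ge r$ so that the sequence $P_{1},\ldots,P_{r'}$ includes a polynomial of the form $Q(X_{1},\ldots,X_{n},Z)=Z-P_{m}(X_{1},\ldots,X_{n})$ (using that $\{P_{r}\}_{r=1}^{\infty}$ exhausts noncommutative polynomials with rational complex coefficients, any small rational-coefficient approximation error being absorbed into $\epsilon$). Then for any $(X_{1},\ldots,X_{n},Z)$ lying in $\Gamma^{(top)}_{R}(x_{1}\oplus y_{1},\ldots,x_{n}\oplus y_{n},z_{m};k,\epsilon',P_{1},\ldots,P_{r'})$, one has on one hand that $(X_{1},\ldots,X_{n})\in\Gamma^{(top)}_{R}(x_{1}\oplus y_{1},\ldots,x_{n}\oplus y_{n};k,\epsilon',P_{1},\ldots,P_{r'})$, and on the other hand that $\|Z-P_{m}(X_{1},\ldots,X_{n})\|\le\epsilon'+\|z_{m}-P_{m}(x_{1}\oplus y_{1},\ldots,x_{n}\oplus y_{n})\|=\epsilon'$.

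Next I would invoke operator-norm Lipschitz continuity of $P_{m}$: there is a constant $L_{m}=L_{m}(R)$, depending on $m$ and $R$ but independent of $k$, such that whenever $\max_{i}\{\|A_{i}\|,\|A_{i}'\|\}\le R$,
$$
\|P_{m}(A_{1},\ldots,A_{n})-P_{m}(A_{1}',\ldots,A_{n}')\|\le L_{m}\,\|(A_{1},\ldots,A_{n})-(A_{1}',\ldots,A_{n}')\|.
$$
Thus if $\{(A_{1}^{\alpha},\ldots,A_{n}^{\alpha})\}_{\alpha}$ is any $\omega$-$\|\cdot\|$-covering of $\Gamma^{(top)}_{R}(x_{1}\oplus y_{1},\ldots,x_{n}\oplus y_{n};k,\epsilon',P_{1},\ldots,P_{r'})$, then the tuples $\{(A_{1}^{\alpha},\ldots,A_{n}^{\alpha},P_{m}(A_{1}^{\alpha},\ldots,A_{n}^{\alpha}))\}_{\alpha}$ form a covering of $\Gamma^{(top)}_{R}(x_{1}\oplus y_{1},\ldots,x_{n}\oplus y_{n},z_{m};k,\epsilon',P_{1},\ldots,P_{r'})$ by operator-norm balls of radius $\max\{\omega,\epsilon'+L_{m}\omega\}$. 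Consequently
$$
\nu_{\infty}\!\bigl(\Gamma^{(top)}_{R}(x_{1}\oplus y_{1},\ldots,z_{m};k,\epsilon',P_{1},\ldots,P_{r'}),\,(L_{m}+1)\omega\bigr)\le \nu_{\infty}\!\bigl(\Gamma^{(top)}_{R}(x_{1}\oplus y_{1},\ldots,x_{n}\oplus y_{n};k,\epsilon',P_{1},\ldots,P_{r'}),\,\omega\bigr),
$$
once $\epsilon'$ is taken smaller than $\omega$.

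Taking logarithms, dividing by $-k^{2}\log\omega$, and passing first to $\limsup_{k\to\infty}$, then to $\inf_{r,\epsilon}$, and finally to $\limsup_{\omega\to 0^{+}}$, the constant factor $(L_{m}+1)$ contributes only $\frac{\log(L_{m}+1)}{-\log\omega}\to 0$. This yields
$$
\delta_{top}(x_{1}\oplus y_{1},\ldots,x_{n}\oplus y_{n},z_{m})\le \delta_{top}(x_{1}\oplus y_{1},\ldots,x_{n}\oplus y_{n}),
$$
as required. The main obstacle is really a bookkeeping issue of matching up the parameters $(R,\epsilon,\omega,r)$ across the two microstate spaces and controlling the Lipschitz constant $L_{m}$; since $L_{m}$ depends only on $m$ and $R$ (not on $k$ or $\omega$), its logarithm is negligible after division by $-\log\omega$, and the estimate goes through cleanly.
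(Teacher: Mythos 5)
Your proposal is correct and follows the standard argument that the paper is appealing to (the paper itself does not write out a proof, only cites the analogous Lemma 5.1 of \cite{HaSh2}). The key ideas are all present: enlarge the family of test polynomials so that one of them is (a rational approximation of) $Z-P_m(X_1,\ldots,X_n)$, use the microstate condition to force the extra coordinate $Z$ to be $\epsilon'$-close in operator norm to $P_m(X_1,\ldots,X_n)$, invoke the uniform Lipschitz bound $L_m=L_m(R)$ for $P_m$ on the $R$-ball (independent of $k$), and lift an $\omega$-covering of the $n$-coordinate microstate set to a covering of the $(n+1)$-coordinate microstate set at scale comparable to $(L_m+1)\omega$; the change of radius by a $k$-independent multiplicative constant washes out in the $\limsup_{\omega\to 0^+}$ because $\log\big((L_m+1)\omega\big)/\log\omega\to 1$.

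Two minor bookkeeping points, neither of which affects the validity of the argument. First, the lifted centers $\bigl(A_1^{\alpha},\ldots,A_n^{\alpha},P_m(A_1^{\alpha},\ldots,A_n^{\alpha})\bigr)$ need not lie inside $\Gamma^{(top)}_{R}(x_1\oplus y_1,\ldots,z_m;\cdots)$; since the paper's covering numbers require centers inside the set, one should invoke Lemma 2.1 (doubling the radius) so that the final comparison reads $\nu_{\infty}\bigl(\text{aug},\,2(L_m+1)\omega\bigr)\le\nu_{\infty}\bigl(\text{orig},\,\omega\bigr)$; this extra factor of $2$ is, of course, harmless. Second, one should also check $\|P_m(A_1^{\alpha},\ldots,A_n^{\alpha})\|\le R$, which holds for $R>4$ since $\|P_m(A^{\alpha})\|$ is $\epsilon'$-close to $\|z_m\|\le 3+\tfrac1{m^3}$. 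With those remarks, the argument is complete and matches the route the paper intends.
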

\begin{proof}The result can be proved in the similar fashion as
the one of Lemma 5.1 in \cite{HaSh2}.
\end{proof}

Now we are ready to present the proof of Proposition 4.3.
\begin{proof}[{\bf Proof of Proposition 4.3: }]
Let $R>\max\{4, \|x_1\oplus y_1\|,\ldots, \|x_n\oplus y_n\|\}$ be a
positive number.  Since both families of $x_1,\ldots,x_n$ and
$y_1,\ldots,y_n$ are stable, if
$$\begin{aligned}
  \alpha&<\delta_{top}(x_1,\ldots,x_n)\\
   \beta &< \delta_{top}(y_1,\ldots,y_n),\end{aligned}
$$
then there are some constants $C_7>0$ and $\omega_0>0$, $r_0\ge 1$,
$k_1,k_2\ge 1$ so that
\begin{align}
\nu_\infty(\Gamma_{R}^{(top)}(x_1,\ldots, x_n;q\cdot k_1,\frac 1 r,
P_1,\ldots, P_r), \omega)&\ge C_7^{ q\cdot k_1}\left( \frac 1
\omega\right)^{\alpha \cdot  q\cdot k_1}, \forall \omega<\omega_0,
r>r_0, q\in \Bbb N,\\
\nu_\infty(\Gamma_{R}^{(top)}(y_1,\ldots, y_n;q\cdot k_2,\frac 1 r,
P_1,\ldots, P_r), \omega)&\ge C_7^{ q\cdot k_2}\left( \frac 1
\omega\right)^{\beta \cdot  q\cdot k_2}, \forall \omega<\omega_0,
r>r_0, q\in \Bbb N.\end{align}


For such $\alpha,\beta>0$, define
$$
  f(a) = \alpha a^2+\beta (1-a)^2+ 1-a^2-(1-a)^2, \quad \forall \
  0\le a\le 1.
$$
Since $\alpha<1$ and $\beta<1$, we know
$$
\max_{0\le a \le 1} f(a) =\frac{\alpha\beta-1}{\alpha+\beta-1}.
$$
 For any $\gamma>0$, let $b, c$ be some  positive integers such
that
$$
  f\left (\frac {bk_{1}}{bk_{1}+ck_{2}}\right )>\frac{\alpha\beta-1}{\alpha+\beta-1} -\gamma.
$$

For $m\ge 2$, choose
$$
z_m=Q_m(x_1\oplus y_1, \ldots, x_n\oplus y_n)
$$ to be a self-adjoint element in $\mathcal A\bigoplus \mathcal B$,
where $Q_m(x_1\oplus y_1, \ldots, x_n\oplus y_n)$ is a self-adjoint
noncommutative polynomial of $x_1\oplus y_1, \ldots, x_n\oplus y_n$,
satisfying
$$
   \|z_m-3I_{\mathcal A}\oplus 0\|\le \frac 1 {m^3},
$$
i.e.
$$
\begin{aligned}
 & \|Q_m(x_1,\ldots, x_m)-3I_{\mathcal A}\|\le\frac 1 {m^3};\\
 &  \|Q_m(y_1,\ldots, y_m)\|\le\frac 1 {m^3}.
\end{aligned}
$$

 For any given $r\ge 1$ and $\epsilon>0$, by the definition of
topological free entropy dimension,  there exist $r'\ge r$ and
$\epsilon'<\epsilon$ such that the following hold:
\ $\forall \   q\in \Bbb N $, if
$$  \begin{aligned}
(A_1,\ldots, A_n) &\in \Gamma_R^{(top)}(x_1,\ldots, x_n; qb k_{1
},\epsilon',
P_1,\ldots,P_{r'})=\Gamma_1 \\
(B_1,\ldots, B_n) &\in \Gamma_R^{(top)}(y_1,\ldots, y_n; qc k_{2
},\epsilon', P_1,\ldots,P_{r'})=\Gamma_2,
\end{aligned}
$$ then
$$
(A_1\oplus B_1,\ldots, A_n\oplus B_n, Q_m(A_1\oplus
B_1,\ldots,A_n\oplus B_n)) \in \Gamma_R^{(top)}(x_1\oplus
y_1,\ldots, x_n\oplus y_n,z_m;k,\epsilon, P_1,\ldots,P_{r}),
$$ where $k= qbk_{1}+ qck_{2}$.

Let
$$\begin{aligned}
 \Omega(\Gamma_1, \Gamma_2)&= \{U^*(A_1\oplus B_1,\ldots, A_n\oplus B_n, Q_m(A_1\oplus
B_1,\ldots,A_n\oplus B_n))U \ | \  \\
 & \qquad \qquad \qquad \qquad U\in \mathcal U(k), \ (A_1,\ldots, A_n) \in \Gamma_1, \ (B_1,\ldots, B_n)\in
 \Gamma_2\}.
\end{aligned}
$$

By   Lemma 4.6, there is a family of elements $\{(A_1^\lambda,
  \ldots, A_n^\lambda )\}_{\lambda\in\Lambda}$, or  $\{(B_1^\sigma,
  \ldots, B_n^\sigma )\}_{\sigma\in\Sigma}$ , in $\Gamma_1$, or $\Gamma_2$ respectively, so
that
\begin{equation}
\|(A_1^\lambda\oplus B_1^\sigma, \ldots, A_n^\lambda\oplus
B_n^\sigma)-(A_1^{\lambda'}\oplus B_1^{\sigma'}, \ldots,
A_n^{\lambda'}\oplus B_n^{\sigma'})\|>\omega,\  \forall \ (\lambda,
\sigma)\ne (\lambda',\sigma') \in\Lambda\times\Sigma; \end{equation}
and
$$
Card(\Lambda)Card(\Sigma)\ge \nu_\infty(\Gamma_1, 2\omega)\cdot
\nu_\infty(\Gamma_2,2\omega).
$$

Note, for any $(\lambda,\sigma)\in\Lambda\times\Sigma$, we have
$$
\begin{aligned}
  &\|Q_m(A_1^\lambda,\ldots, A_n^\lambda)-3I_{qbk_1}\|  \le \|Q_m(x_1 ,\ldots,x_n )-3I_{\mathcal A} \|+ \epsilon<\frac
2
  {m^3}\\
&\|Q_m(B_1^\sigma,\ldots, B_1^\sigma)\| \le \|Q_m( y_1,\ldots, y_n)
\|+ \epsilon<\frac 2
  {m^3}
\end{aligned}
$$
By Lemma 4.8 and Lemma 4.9, we have that \begin{equation}Q_m(A_1^\lambda,\ldots,
A_n^\lambda) \ \in \ R(qbk_1-\frac {4k_1}{m^4},  \frac
{4k_1}{m^4})\subseteq R(qbk_1-\frac {4k}{m^4},  \frac {4k}{m^4})\end{equation}
and \begin{equation} Q_m(B_1^\sigma,\ldots, B_1^\sigma)\ \in \ R(qbk_1-\frac
{4k_2}{m^4}, \frac {4k_2}{m^4})\subseteq Q(qck_2-\frac {4k}{m^4},
 \frac {4k}{m^4}).\end{equation}
On the other hand, from Lemma 4.7, there exists a family of unitary
matrices $\{
   U_\gamma\}_{\gamma\in\mathcal I}$ in $\mathcal M_k(\Bbb C)$ so that (i)
   when $\gamma_1\ne\gamma_2\in \mathcal I$,
    \begin{align}
    &\|U_{\gamma_1}^*(Q_1\oplus Q_2)U_{\gamma_1}-U_{\gamma_2}^*(\tilde Q_1\oplus \tilde Q_2)U_{\gamma_2}\|\ge
    \omega, \notag \\ & \qquad \quad \  \ \forall \ Q_1,\tilde Q_1\in R(qbk_1-\frac {4k}{m^4},  \frac {4k}{m^4}),   \ Q_2,\tilde Q_2\in Q(qck_2-\frac {4k}{m^4},
 \frac {4k}{m^4});\end{align}   and (ii)
$$\begin{aligned}
 Card (\mathcal I) &\ge  (C_6\cdot 130\omega )^{-k^2}\cdot
\left( \frac {C_5}{\omega} \right )^{-((qbk_1-\frac
{4k}{m^4})^2+(qck_2-\frac {4k}{m^4})^2+ 8\frac {4k}{m^4}+ 12(k\frac
{4k}{m^4}))}\\
&\ge   (C_6\cdot 130\omega)^{-k^2}\cdot \left( \frac {C_5}{\omega}
\right )^{-((qbk_1)^2+(qck_2)^2+    \frac {72k^2}{m^4})
}\end{aligned}
$$ where $C_5,C_6$ are some constants independent of
$k,m$.

 Consider the family of matrices
$$
\{U_\gamma^*   ( A_1^\lambda\oplus B_1^\sigma, \ldots,
A_n^\lambda\oplus B_n^\sigma,Q_m(A_1^\lambda\oplus B_1^\sigma,
\ldots, A_n^\lambda\oplus B_n^\sigma))
 U_\gamma\}_{\lambda\in\Lambda, \sigma\in\Sigma, \gamma\in\mathcal I}
$$
in $\Omega(\Gamma_1,\Gamma_2).$ By (4.3.6), (4.3.7) and (4.3.8) we know that, if
$\gamma_1\ne\gamma_2\in \mathcal I$, then for any
$(\lambda_1,\sigma_1)$ and $ (\lambda_2,\sigma_2)  $ in
$\Lambda\times \Sigma$,
$$\begin{aligned}
\|U_{\gamma_1}^* &Q_m(A_1^{\lambda_1}\oplus B_1^{\sigma_1},
\ldots, A_n^{\lambda_1}\oplus B_n^{\sigma_1}) U_{\gamma_1}\\
&  -U_{\gamma_2}^* Q_m(A_1^{\lambda_2}\oplus B_1^{\sigma_2}, \ldots,
A_n^{\lambda_2}\oplus B_n^{\sigma_2}) U_{\gamma_2}\|\ge \omega.
\end{aligned}$$
Combining with (4.3.5), we have
$$\begin{aligned}
Pack_\infty(\Omega(\Gamma_1,\Gamma_2),\omega)&\ge Card(\Lambda)Card(\Sigma) Card (\mathcal I) \\&\ge
\nu_\infty(\Gamma_1, 2\omega)\nu_\infty(\Gamma_2,2\omega)(C_6\cdot
130\omega)^{-k^2}\cdot \left( \frac {C_5}{\omega} \right
)^{-((qbk_1)^2+(qck_2)^2+    \frac {72k^2}{m^4}) }.\end{aligned}
$$
By inequalities (4.3.3) and (4.3.4), when $\omega,\epsilon$ are
small, $\forall \ q\in \Bbb N$  we have
 \begin{align}
 &  { \nu_{\infty}(\Gamma_R^{(top)}(x_1\oplus y_1,\ldots,
x_n\oplus
y_n,z_m;k,\epsilon, P_1,\ldots,P_{r})),\omega/2)} \notag\\
& \qquad \ge Pack_\infty(\Omega(\Gamma_1,\Gamma_2), \omega)\notag\\
& \qquad  \ge C_7^{(qbk_1)^2}\left( \frac 1 \omega\right)^{\alpha
\cdot (qbk_1)^2}\cdot C_7^{(qck_2)^2}\left( \frac 1
\omega\right)^{\beta \cdot(qck_2)^2}\cdot (130C_6\omega)^{-k^2}\cdot
\left (\frac{ C_5 }{ \omega}\right )^{- (qbk_{1})^2 -(qck_{2})^2-
\frac {72k^2}{m^4} }
\notag\\
&\qquad \ge C_8^{k^2} \left (  \frac 1 \omega \right
)^{(\frac{\alpha\beta-1}{\alpha+\beta-2}-\gamma-\frac
{72}{m^4})k^2},
\end{align}  where $k=qbk_{1}+qck_{2}$ and $C_8$ is a constant independent of $k,\omega$.
Then, it induces that
$$\begin{aligned}
   \limsup_{\omega\rightarrow 0^+}&\inf_{r\in \Bbb N} \limsup_{k\rightarrow
   \infty}\frac {\log( \nu_{\infty}(\Gamma_R^{(top)}(x_1\oplus y_1,\ldots,
x_n\oplus y_n,z_m;k,\epsilon,
P_1,\ldots,P_{r})),\omega))}{-k^2\log\omega}\\
&\qquad \ge \frac{\alpha\beta-1}{\alpha+\beta-2}-\gamma-\frac
{72}{m^4} .\end{aligned}
$$
Since $\gamma, m$ are arbitrary, we  obtain
$$
\delta_{top}(x_1\oplus y_1,\ldots, x_n\oplus y_n,z_m)\ge
\frac{\alpha\beta-1}{\alpha+\beta-2}.
$$
Hence, by Lemma 4.10,
$$
\delta_{top}(x_1\oplus y_1,\ldots, x_n\oplus y_n)\ge
\delta_{top}(x_1\oplus y_1,\ldots, x_n\oplus y_n,z_m)\ge
\frac{st-1}{s+t-2},
$$where
$$
 s=\delta_{top} (x_1,\ldots,x_n) \quad \text { and } \quad t= \delta_{top}
 (y_1,\ldots,y_n).
$$

 (i) Combining with Proposition 4.2, we have that
$$
\delta_{top}(x_1\oplus y_1,\ldots, x_n\oplus y_n)=
\frac{st-1}{s+t-2},
$$ where
$$
 s=\delta_{top} (x_1,\ldots,x_n) \quad \text { and } \quad t= \delta_{top}
 (y_1,\ldots,y_n).
$$

(ii) Moreover, by inequality (4.3.9), we know that $x_1\oplus
y_1,\ldots, x_n\oplus y_n,z_m$ is a stable family. Since $z_m$ is a
polynomial of $x_1\oplus y_1,\ldots, x_n\oplus y_n$, we know that
 $x_1\oplus y_1,\ldots, x_n\oplus y_n$ is also a stable family.

\end{proof}

\begin{remark}
If $I_{\mathcal A}\oplus 0$ is in the $*$-algebra generated by
$x_1\oplus y_1,\ldots, x_n\oplus y_n$, i.e. there is a
non-commutative polynomial $P$ such that $I_{\mathcal A}\oplus 0=
P(x_1\oplus y_1,\ldots, x_n\oplus y_n)$, then   a much simpler proof
can be provided by using Lemma 3.3 in \cite{HaSh2} instead of Lemma
4.7 here.
\end{remark}

\subsection{Conclusion}
As a summary, we have the following result.
\begin{theorem}
Suppose that $\mathcal A$ and $\mathcal B$ are two unital C$^*$
algebras and $x_1\oplus y_1, \ldots, x_n\oplus y_n$ is a family of
self-adjoint elements that generates $\mathcal A\bigoplus \mathcal
B$.  Assume
$$
s=\delta_{top}(x_1,\ldots,x_n)  \qquad \text { and } \qquad
t=\delta_{top}(y_1,\ldots,y_n).
$$

\noindent (i) If $s\ge 1$ or $t\ge 1$, then
$$
\delta_{top}(x_1\oplus y_1, \ldots, x_n\oplus y_n)=\max
\{\delta_{top}(x_1,\ldots,x_n), \delta_{top}(y_1,\ldots,y_n)\}
$$
\noindent (ii)
  If $s<1$,  $t<1$ and both families $\{x_1,\ldots,x_n\}$, $\{y_1,\ldots,
y_n\}$ are stable, then
$$
\delta_{top}(x_1\oplus y_1, \ldots, x_n\oplus y_n)=
\frac{st-1}{s+t-2} ;
$$
and  the family of elements $x_1\oplus y_1, \ldots, x_n\oplus y_n$
is also stable.
\end{theorem}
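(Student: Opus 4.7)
The statement is a direct consolidation of results already proved earlier in Section 4, so my plan is to assemble the two halves from the appropriate prior inputs rather than to reprove anything from scratch.

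For part (i), the plan is to combine Proposition 4.2 (upper bound) with Lemma 4.4 (lower bound). Proposition 4.2 already states that $\delta_{top}(x_1\oplus y_1,\ldots,x_n\oplus y_n) \le \max\{s,t\}$ when either $s\ge 1$ or $t\ge 1$, while Lemma 4.4 gives the matching inequality $\delta_{top}(x_1\oplus y_1,\ldots,x_n\oplus y_n)\ge \max\{s,t\}$ with no stability or dimension hypotheses. The two together yield the equality, so part (i) is immediate. Conceptually, the upper bound comes from Lemma 4.2 (every microstate of the orthogonal sum is unitarily close to a block-diagonal microstate) together with the optimization in Lemma 4.3, which shows that the max of $f(s)=\alpha s^2+\beta(1-s)^2+1-s^2-(1-s)^2$ over $0\le s\le 1$ is $\max\{\alpha,\beta\}$ exactly when $\alpha\ge 1$ or $\beta\ge 1$. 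The lower bound is the trivial observation that block-diagonal configurations are themselves valid microstates.

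For part (ii), the plan is to apply Proposition 4.3 directly. The upper bound is again Proposition 4.2, which under the hypothesis $s,t<1$ gives $\delta_{top}(x_1\oplus y_1,\ldots,x_n\oplus y_n)\le \frac{st-1}{s+t-2}$ via the interior maximum of $f$ from Lemma 4.3. The matching lower bound and the stability conclusion are precisely the content of Proposition 4.3. Thus part (ii) is also a straight invocation.

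The one substantive step — the one I would expect to be the main obstacle if I were reproving everything — is the lower bound in Proposition 4.3. Here the obstruction (Remark 4.3) is that $I_{\mathcal A}\oplus 0$ need not lie in the $*$-algebra generated by the $x_i\oplus y_i$, so one cannot simply read off a block decomposition from the microstates. The workaround is to choose a self-adjoint polynomial $z_m=Q_m(x_1\oplus y_1,\ldots,x_n\oplus y_n)$ with $\|z_m-3I_{\mathcal A}\oplus 0\|\le 1/m^3$, use Lemmas 4.8 and 4.9 to force $Q_m$ evaluated on microstates into the sets $R(s_1,s_3)$ and $Q(s_2,s_3)$, and then invoke Lemma 4.7 to produce a large family of unitaries whose conjugations of such block operators are mutually $\omega$-separated. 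Combined with stability of each factor, this produces $\approx C_8^{k^2}(1/\omega)^{(\frac{st-1}{s+t-2}-o(1))k^2}$ packing, and then Lemma 4.10 removes $z_m$ from the dimension. The technical heart is Lemma 4.7, which rests on the covering bound for $\mathcal S(s_1,s_2,s_3)$ in Sublemma 4.3.4 and a standard packing argument. Since all of this is already carried out, my "proof" for Theorem 4.2 is simply: part (i) follows from Theorem 4.1; part (ii) follows from Proposition 4.3.
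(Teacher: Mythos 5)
Your proposal is correct and mirrors the paper exactly: Theorem 4.2 is stated as a summary, with part (i) being Theorem 4.1 (itself the combination of Proposition 4.2 and Lemma 4.4 that you cite) and part (ii) being Proposition 4.3. Your side commentary on the obstruction in Remark 4.3 and the role of $z_m$, Lemmas 4.7--4.10, and stability also accurately reflects the paper's argument.
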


\section{Topological free entropy dimension of finite dimensional
C$^*$ algebras}

In this section, we are going to compute the topological free
entropy dimension of a family of self-adjoint generators of a finite
dimensional C$^*$ algebra.

\begin{theorem}
Suppose that $\mathcal A$ is a finite dimensional C$^*$ algebra and
$dim_{\Bbb C}\mathcal A$ is the complex dimension of $\mathcal A$.
If $x_1,\ldots,x_n$ is a family of self-adjoint generators of
$\mathcal A$, then
$$
\delta_{top}(x_1,\ldots,x_n)=1-\frac 1 { dim_{\Bbb C}\mathcal A}.
$$
\end{theorem}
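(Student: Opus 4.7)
The plan is to proceed by induction on the number of simple summands of $\mathcal A$, using the Wedderburn-Artin structure theorem $\mathcal A\cong\bigoplus_{i=1}^k \mathcal M_{n_i}(\Bbb C)$, so that $\dim_{\Bbb C}\mathcal A=\sum_{i=1}^k n_i^2$. The base case $k=1$ is exactly Theorem~3.1. For the inductive step, I write $\mathcal A=\mathcal A'\oplus \mathcal M_{n_k}(\Bbb C)$ where $\mathcal A'=\bigoplus_{i=1}^{k-1}\mathcal M_{n_i}(\Bbb C)$, split each generator as $x_j=x_j'\oplus x_j''$ with $x_j'\in\mathcal A'$ and $x_j''\in\mathcal M_{n_k}(\Bbb C)$, and apply Theorem~4.2(ii).

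Before invoking Theorem~4.2, I need to verify two prerequisites. First, $\{x_1',\ldots,x_n'\}$ generates $\mathcal A'$ and $\{x_1'',\ldots,x_n''\}$ generates $\mathcal M_{n_k}(\Bbb C)$: this follows because the canonical quotient maps $\pi':\mathcal A\to\mathcal A'$ and $\pi'':\mathcal A\to\mathcal M_{n_k}(\Bbb C)$ are surjective $*$-homomorphisms, and a surjective $*$-homomorphism sends a generating family to a generating family. Second, both families must be stable and have topological free entropy dimension $<1$: by the inductive hypothesis (applied to $\mathcal A'$, which has strictly fewer summands) we get $\delta_{top}(x_1',\ldots,x_n')=1-\tfrac1{D'}$ where $D'=\sum_{i=1}^{k-1}n_i^2$, and stability is the ``moreover'' part of Theorem~4.2(ii); by Theorem~3.1 and Example~4.1 we get $\delta_{top}(x_1'',\ldots,x_n'')=1-\tfrac1{n_k^2}$ together with stability. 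Both quantities are strictly less than $1$.

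Applying Theorem~4.2(ii) with $s=1-\tfrac1{D'}$ and $t=1-\tfrac1{n_k^2}$, it suffices to verify the algebraic identity
\begin{equation*}
\frac{st-1}{s+t-2}=1-\frac{1}{D'+n_k^2}.
\end{equation*}
This is a short direct calculation: writing $1-s=\tfrac1{D'}$ and $1-t=\tfrac1{n_k^2}$, one has $st-1=(s+t-2)+(1-s)(1-t)$, hence
\begin{equation*}
\frac{st-1}{s+t-2}=1+\frac{(1-s)(1-t)}{s+t-2}=1-\frac{1/(D'n_k^2)}{1/D'+1/n_k^2}=1-\frac{1}{D'+n_k^2},
\end{equation*}
which equals $1-\tfrac1{\dim_{\Bbb C}\mathcal A}$, completing the induction. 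Since Theorem~4.2(ii) also asserts that the combined family is stable, the inductive hypothesis on stability is preserved.

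There is essentially no obstacle here beyond the bookkeeping: the whole argument reduces to Theorem~3.1, Theorem~4.2, Example~4.1, and the clean algebraic identity above. The only point that deserves a brief comment is why the summand-projections are again generating (handled by surjectivity of $\pi',\pi''$); all remaining work has been done in the earlier sections, including the crucial stability propagation which is what makes the iteration legitimate.
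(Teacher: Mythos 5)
Your argument is correct and follows the same route the paper takes: decompose $\mathcal A$ as a direct sum of full matrix algebras via Wedderburn, take Theorem 3.1 together with Example 4.1 as the base case, and iterate Theorem 4.2(ii), using its ``moreover'' clause to propagate stability. The paper's own proof is just a two-line citation of Theorems 3.1 and 4.2; your write-up, including the check that the coordinate projections carry a generating family to generating families, the stability bookkeeping, and the algebraic identity $\tfrac{st-1}{s+t-2}=1-\tfrac{1}{D'+n_k^2}$, fills in exactly the details the paper leaves implicit.
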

\begin{proof}
It is well known that
$$
\mathcal A\backsimeq \mathcal M_{n_1}(\Bbb C) \bigoplus \mathcal
M_{n_2}(\Bbb C) \bigoplus \cdots \bigoplus \mathcal M_{n_m}(\Bbb C),
$$ for a sequence of positive integers $n_1,\ldots,n_m$. By Theorem
3.1 and Theorem 4.2, we have
$$
\delta_{top}(x_1,\ldots,x_n)=1-\frac 1{n_1^2+\cdots+ n_m^2}= 1-\frac
1 { dim_{\Bbb C}\mathcal A}.
$$

 \end{proof}

Similarly, we have the following result.
 \begin{theorem}
Suppose that $\mathcal K$ is the algebra of all compact operators in
a separable Hilbert space $H$. Suppose that $\mathcal A$ is the
unitization of $\mathcal K$ and $\mathcal B$ is a finite dimensional C$^*$ algebra. If $x_1,\ldots, x_m$ is a family of self-adjoint elements that
generates $\mathcal A\bigoplus\mathcal B$ as a C$^*$ algebra, then
$$
\delta_{top}(x_1,\ldots, x_m)= 1- \frac 1 {dim_{\Bbb C}
\mathcal B+1}.
$$
 \end{theorem}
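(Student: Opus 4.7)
The plan is to reduce this directly to Theorem~4.2(ii) together with Theorem~5.1 and Example~4.1(3). First I would decompose each generator as $x_i = a_i \oplus b_i$ with $a_i = x_i|_{\mathcal A}$ self-adjoint in $\mathcal A$ and $b_i = x_i|_{\mathcal B}$ self-adjoint in $\mathcal B$. Since $x_1,\ldots,x_m$ generate $\mathcal A \bigoplus \mathcal B$, projecting onto each summand shows that $a_1,\ldots,a_m$ generate $\mathcal A$ and $b_1,\ldots,b_m$ generate $\mathcal B$ as C$^*$-algebras.

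Next I would collect the ingredients needed to invoke Theorem~4.2(ii). On the $\mathcal A$-side, Example~4.1(3) states that any self-adjoint generating family of the unitization of $\mathcal K$ is stable with $\delta_{top}(a_1,\ldots,a_m)=0$; so $s=0<1$. On the $\mathcal B$-side, Theorem~5.1 gives $\delta_{top}(b_1,\ldots,b_m)=1-\tfrac{1}{\dim_{\Bbb C}\mathcal B}$, which is strictly less than $1$. Stability of $\{b_1,\ldots,b_m\}$ is obtained by applying Example~4.1(1) (matrix algebras are stable) iteratively through Theorem~4.2(ii), which propagates stability across orthogonal sums exactly in the same induction that proves Theorem~5.1. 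Thus both hypotheses of Theorem~4.2(ii) are met.

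Applying Theorem~4.2(ii) with $s=0$ and $t = 1 - \tfrac{1}{\dim_{\Bbb C}\mathcal B}$ yields
$$
\delta_{top}(x_1,\ldots,x_m) \;=\; \frac{st-1}{s+t-2} \;=\; \frac{-1}{t-2} \;=\; \frac{1}{2-t} \;=\; \frac{\dim_{\Bbb C}\mathcal B}{\dim_{\Bbb C}\mathcal B+1} \;=\; 1 - \frac{1}{\dim_{\Bbb C}\mathcal B+1},
$$
which is exactly the claimed formula. There is no serious obstacle here; the only verification worth stating carefully is that the generating family of the finite-dimensional algebra $\mathcal B$ is stable, but this is precisely the conclusion (ii) of Theorem~4.2 carried through the Wedderburn decomposition $\mathcal B \cong \mathcal M_{n_1}(\Bbb C)\bigoplus\cdots\bigoplus\mathcal M_{n_k}(\Bbb C)$, starting from the matrix-algebra base case of Example~4.1(1).
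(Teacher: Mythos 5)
Your proof is correct and takes essentially the same route the paper intends: the paper states Theorem 5.2 with only the remark ``Similarly,'' and the intended argument is exactly what you give — decompose the generators across the two summands, use Example 4.1(3) to get $s=0$ with stability on the unitized-compacts side, use Theorem 5.1 (with stability propagated through the Wedderburn decomposition via Theorem 4.2(ii) starting from Example 4.1(1)) to get $t=1-\frac{1}{\dim_{\Bbb C}\mathcal B}<1$ on the finite-dimensional side, and then plug into the formula of Theorem 4.2(ii). The arithmetic $\frac{st-1}{s+t-2}\big|_{s=0}=\frac{1}{2-t}=1-\frac{1}{\dim_{\Bbb C}\mathcal B+1}$ is right, and your explicit observation that stability of the $\mathcal B$-side generators is needed and is delivered by conclusion (ii) of Theorem 4.2 is a point the paper leaves implicit.
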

\bigskip

\end{document}